\pgfplotsset{compat=newest}
\definecolor{teal}{rgb}{0.0, 0.5, 0.5}
\newcounter{mnotecount}[section]
\newcommand{\rmnote}[1]{}%{\mnote{#1}}
\DeclareFontFamily{U}{mathb}{\hyphenchar\font45}
\DeclareFontShape{U}{mathb}{m}{n}{
      <5> <6> <7> <8> <9> <10> gen * mathb
      <10.95> mathb10 <12> <14.4> <17.28> <20.74> <24.88> mathb12
      }{}
\DeclareSymbolFont{mathb}{U}{mathb}{m}{n}
\theoremstyle{plain}
\newtheorem*{theorem*}{Theorem}
\newtheorem{theorem}{Theorem}[section]
\newtheorem*{lemma*}{Lemma}
\newtheorem{lemma}[theorem]{Lemma}
\newtheorem*{assumption*}{Assumption}
\newtheorem*{proposition*}{Proposition}
\newtheorem{proposition}[theorem]{Proposition}
\newtheorem*{corollary*}{Corollary}
\newtheorem{corollary}[theorem]{Corollary}
\newtheorem*{claim*}{Claim}
\newtheorem*{conjecture*}{Conjecture}
\newtheorem*{result*}{Result}
\theoremstyle{definition}
\newtheorem*{definition*}{Definition}
\newtheorem{definition}[theorem]{Definition}
\newtheorem*{example*}{Example}
\newtheorem{example}[theorem]{Example}
\newtheorem*{algorithm*}{Algorithm}
\newtheorem*{remark*}{Remark}
\newtheorem*{remarks*}{Remarks}
\newtheorem{remark}[theorem]{Remark}
\newtheorem*{question*}{Question}
\newtheorem*{convention*}{Convention}
\Crefname{l}{Lemma}{Lemmas}    %declares the type of the environment for cleverref, use \label[<type>]{<label>}
\Crefname{p}{Proposition}{Propositions}
\Crefname{t}{Theorem}{Theorems}
\Crefname{c}{Corollary}{Corollaries}
\Crefname{r}{Remark}{Remarks}
\Crefname{d}{Definition}{Definitions}
\Crefname{e}{Example}{Examples}
\Crefname{conv}{Convention}{Conventions}
\numberwithin{equation}{section}
\def\al{\alpha}
\def\be{\beta}
\def\ga{\gamma}
\def\de{\delta}
\def\ep{\epsilon}
\def\ze{\zeta}
\def\ka{\kappa}
\def\la{\lambda}
\def\rh{\rho}
\def\si{\sigma}
\def\vh{\varphi}
\def\om{\omega}
\def\Ga{\Gamma}
\def\Ps{\Psi}
\def\C{\mathbb{C}}
\def\N{\mathbb{N}}
\def\R{\mathbb{R}}
\def\cB{\mathcal{B}}
\def\cC{\mathcal{C}}
\def\cE{\mathcal{E}}
\def\cG{\mathcal{G}}
\def\cH{\mathcal{H}}
\def\cS{\mathcal{S}}
\def\sU{\mathscr{U}}
\def\p{\partial}
\def\id{\on{id}}
\def\<{\langle}
\def\>{\rangle}
\renewcommand{\o}{\circ}
\def\ol{\overline}
\let\on=\operatorname
\newcommand{\sr}[1]%
{\ifmmode{}^\dagger\else${}^\dagger$\fi\ifvmode
\vbox to 0pt{\vss
 \hbox to 0pt{\hskip\hsize\hskip1em
 \vbox{\hsize3cm\raggedright\pretolerance10000
 \noindent #1\hfill}\hss}\vss}\else
 \vadjust{\vbox to0pt{\vss%
 \hbox to 0pt{\hskip\hsize\hskip1em%
 \vbox{\hsize3cm\raggedright\pretolerance10000%
 \noindent #1\hfill}\hss}\vss}}\fi%
}
\def\A{\;\forall}
\def\E{\;\exists}
\def\ev{\on{ev}}
\providecommand{\mapsfrom}{\kern.2em%
\setbox0=\hbox{$\leftarrow$\kern-.10em\rule[0.26mm]{0.1mm}{1.3mm}}\box0%
\kern.3em}
\def\AS{\mathcal{AC}^\infty}
\def\AA{\mathcal{AC}^\omega}
\def\PA{\mathcal{PC}^\omega}
\def\AU{\mathcal{AE}^{\{\omega\}}}
\title{On spaces of arc-smooth maps}
\author[Armin Rainer]{Armin Rainer \orcidlink{0000-0003-3825-3313}}
\address{Institute for Statistics and Mathematical Methods in Economics, E105-04,
TU Wien, Wiedner Hauptstraße 8, 1040 Vienna, Austria}
\email{armin.rainer@tuwien.ac.at}
\begin{document}

\begin{abstract}
    It is well-known that a function on an open set in $\R^d$ is smooth if and only if 
    it is arc-smooth, i.e., its composites with all smooth curves are smooth. 
    In recent work, we extended this and related results (for instance a real analytic version) 
    to suitable closed sets, notably, 
    sets with H\"older boundary and fat subanalytic sets satisfying a necessary topological 
    condition. 
    In this paper, we prove that the resulting set-theoretic identities of function spaces 
    are bornological isomorphisms with respect to their natural locally convex topologies.
    Extending the results to maps with values in convenient vector spaces, 
    we obtain corresponding exponential laws. 
    Additionally, we show analogous results for special ultradifferentiable 
    Braun--Meise--Taylor classes.
\end{abstract}

\thanks{This research was funded %in whole or in part 
    by the Austrian Science Fund (FWF) DOI 10.55776/PAT1381823.
    For open access purposes, the author has applied a CC BY public copyright license 
    to any author-accepted manuscript version arising from this submission.}
\thanks{\orcidlinkc{0000-0003-3825-3313}}
\keywords{Arc-smooth functions, real analyticity, ultradifferentiability, exponential laws, closed fat subanalytic sets, H\"older sets}
\subjclass[2020]{
    %26C05,   %Real polynomials: analytic properties, etc. [See also 12Dxx, 12Exx]
    %26C10,   %Real polynomials: location of zeros 
    %26A46,   %Absolutely continuous real functions in one variable
    %30C15,   %Zeros of polynomials, rational functions, and other analytic functions of one complex variable (e.g., zeros of functions with bounded Dirichlet integral)
    %46E35,   %Sobolev spaces and other spaces of "smooth'' functions, embedding theorems, trace theorems
    %47H30,   %Particular nonlinear operators (superposition, Hammerstein, Nemytskiĭ, Uryson, etc.)
%    14P10,      %Semialgebraic sets and related spaces
%	26B05, 	    %Continuity and differentiation questions
%	26B35,  	%Special properties of functions of several variables, HÃ¶lder conditions, etc.
    26E05,  	%Real-analytic functions [See also 32B05, 32C05]
    26E10,  	%$C^\infty$-functions, quasi-analytic functions
%    26E25,      %Set-valued functions 
    32B20,  	%Semi-analytic sets and subanalytic sets
%   46E15,      %Banach spaces of continuous, differentiable or analytic functions
    46E40}  	%Spaces of vector- and operator-valued functions
    %58C20,    %Differentiation theory (Gateaux, FrÃ©chet, etc.) on manifolds
	%58C25}  	%Differentiable maps
	%58B10,  	%Differentiability questions
	%58B25,  	%Group structures and generalizations on infinite-dimensional manifolds
	%58C07,  	%Continuity properties of mappings
	%58D05} 	    %Groups of diffeomorphisms and homeomorphisms as manifolds
%\dedicatory{dedicatory}
%\date{\today}

\maketitle

%\setcounter{tocdepth}{1}
%\tableofcontents

%\newpage

%-----------------------------------------------------------------------------------------------------------------------
\section{Introduction}
%-----------------------------------------------------------------------------------------------------------------------

A result of Boman \cite{Boman67} states that a function $f$ defined on an open subset $U$ of $\R^d$ is smooth ($\cC^\infty$) if and only if it is \emph{arc-smooth} ($\AS$), i.e., 
$f \o c$ is $\cC^\infty$ for each $\cC^\infty$ curve $c$ in $U$. Arc-smooth functions are meaningful on arbitrary nonempty subsets $X$ of $\R^d$ but a few assumptions 
are necessary in order to expect a result similar to Boman's. 
Let us assume that $X$ is closed and \emph{fat}, i.e., 
$X$ is contained in the closure of its interior; 
thus $X = \ol X = \ol{X^\o}$.
This is a natural assumption for our purpose:
for example, on the algebraic set $X=\{(x,y) \in \R^2 : x^3 = y^2\}$ the function 
$X \ni (x,y) \mapsto y^{1/3}$ is arc-smooth, by a theorem of Joris \cite{Joris82}, but it is not the restriction to $X$ of a $\cC^\infty$ function on $\R^2$. 
Moreover, we assume that $X$ is \emph{simple}, i.e., each $x \in X$ has a basis of neighborhoods $\sU$ such that $X^\o \cap U$ is connected for all $U \in \sU$.
This condition is needed to guarantee uniqueness of potential candidates for derivatives at boundary points.
The third assumption is a certain \emph{tameness} of $X$: we will suppose that $X$ is subanalytic or a H\"older set (see the definition in \Cref{ssec:Hoelder})

For simple closed fat subanalytic or H\"older sets $X \subseteq \R^d$ we proved in \cite{Rainer18} (see also \cite{Rainer:2021tr}) 
that the arc-smooth functions on $X$ are precisely the restrictions of $\cC^\infty$ functions on $\R^d$:
\begin{equation} \label{e:AS}
    \AS(X) = \cC^\infty(X).
\end{equation}
(This can be extended to sets definable in a polynomially bounded o-minimal expansion of the real field that admit smooth rectilinearization, see \cite{Rainer:2021tr}.)
It is false on infinitely flat cusps, see \cite[Example 10.4]{Rainer18}. 

The real analytic analogue of Boman's theorem is wrong: the composite of the function $f(x,y) = \frac{xy^2}{x^2+y^2}$  
with every real analytic curve $c$ in $\R^2$ is real analytic, but $f$ is not $\cC^1$. 
There even exist discontinuous functions $f$ 
with the property that $f\o c$ is real analytic for all real analytic curves $c$, see
\cite{BierstoneMilmanParusinski91}. 
But the arc-smooth functions $f$ on an open set $U \subseteq \R^d$, that additionally 
have the property that $f\o c$ is real analytic for all real analytic curves $c$ in $U$,    
are precisely the real analytic functions on $U$, by a theorem of Bochnak and Siciak \cite{Siciak70, BochnakSiciak71}.

We proved in \cite{Rainer18} (see also \cite{Rainer:2024aa}) that this is also true on simple closed fat subanalytic or H\"older sets $X \subseteq \R^d$
in the sense that the arc-smooth functions on $X$ that also map real analytic curves to real analytic curves ($\AA$) have a real analytic extension to an open neighborhood of $X$:
\begin{equation} \label{e:AA}
    \AA(X) = \cC^\om(X).
\end{equation}

In contrast to the case of open domains,
for closed domains there is a loss of regularity, namely, 
the discrepancy between $m$ and $n$, where $m$ is
the number of derivatives of $f \o c$ necessary to determine the $n$ first derivatives of $f$.
This loss of derivatives 
was linked in an exact way to the sharpness of the (outward pointing) cusps in the boundary of $X$ in \cite{Rainer:2021tr}.
Even at smooth parts of the boundary $2n$ derivatives of $f \o c$ are needed for the first $n$ derivatives of $f$.
See also \cite{Rainer:2024aa} for the real analytic case.

This loss of regularity manifests itself also in the ultradifferentiable framework of Denjoy--Carleman classes (as explored in \cite{Rainer18}) 
while on open sets we have an ultradifferentiable version of Boman's theorem, see \cite{KMRc}.  

In this paper, we work with special ultradifferentiable Braun--Meise--Taylor classes whose defining weight functions $\om$ 
have a property (see \eqref{e:robust})
that allows for absorption of the loss of derivatives.
We show that, for all simple fat closed subanalytic sets $X \subseteq \R^d$, 
the functions on $X$ that map $\cE^{\{\om\}}$ curves in $X$ to $\cE^{\{\om\}}$ curves ($\AU$) are restrictions of $\cC^\infty$ functions on $\R^d$ that satisfy the defining $\cE^{\{\om\}}$ bounds 
on compact subsets of $X$:
\begin{equation} \label{e:AU}
    \AU(X) = \cE^{\{\om\}}(X).
\end{equation}
It should be mentioned that the class $\cE^{\{\om\}}$ is non-quasianalytic and stable under composition. 
In the quasianalytic case, there is no hope for a result like this, 
see \cite{Jaffe16} and \cite{Rainer:2019aa}.

We will show that the set-theoretic identities \eqref{e:AS}, \eqref{e:AA}, and \eqref{e:AU} are bornological 
isomorphisms with respect to their natural locally convex topologies. 
Furthermore, we prove the bornological isomorphisms (\emph{exponential laws})
\begin{align*}
    \AS(X_1,\AS(X_2,E)) &\cong \AS(X_1 \times X_2,E), 
    \\
    \AA(X_1,\AA(X_2,E)) &\cong \AA(X_1 \times X_2,E),
    \\
    \AU(X_1,\AU(X_2,E)) &\cong \AU(X_1 \times X_2,E), 
\end{align*}
where $X_i \subseteq \R^{d_i}$, $i=1,2$, are arbitrary simple fat closed subanalytic sets and $E$ is any convenient vector space (see the definition in \Cref{ssec:convenient}).
To make sense of the left-hand sides (even if $E=\R$) we have to extend the definitions of $\AS$, $\AA$, and $\AU$ to maps with values in convenient vector spaces.
It turns out that the bornological isomorphisms \eqref{e:AS}, \eqref{e:AA}, and \eqref{e:AU} 
lift to versions for such vector valued maps. 
As a consequence we obtain the exponential laws  
\begin{align*}
    \cC^\infty(X_1,\cC^\infty(X_2,E)) &\cong \cC^\infty(X_1 \times X_2,E), 
    \\
    \cC^\om(X_1,\cC^\om(X_2,E)) &\cong \cC^\om(X_1 \times X_2,E),
    \\
    \cE^{\{\om\}}(X_1,\cE^{\{\om\}}(X_2,E)) &\cong \cE^{\{\om\}}(X_1 \times X_2,E). 
\end{align*}
Note that the product $X_1 \times X_2$ is a simple fat closed subanalytic set in $\R^{d_1} \times \R^{d_2}$ (see \Cref{l:prod}).

For open sets $X_i$, even $c^\infty$-open in convenient vector spaces (see \Cref{ssec:convenient}), the exponential laws are well known:
for $\cC^\infty$ by \cite{Froelicher80,Froelicher81,Kriegl82,Kriegl83},
for $\cC^\om$ by \cite{KrieglMichor90}, 
and in the ultradifferentiable case by \cite{KMRc,KMRq,KMRu,Schindl14a}. 
For convex sets $X_i$ in convenient vector spaces with nonempty $c^\infty$-interior,
similar results in the $\cC^\infty$ and $\cC^\om$ case were obtained by \cite{Kriegl97}.

Let us briefly describe the structure of the paper.
In \Cref{sec:pre}, we recall facts on \emph{convenient analysis} needed later on, in particular, the uniform boundedness principle 
which will be used frequently. Moreover, we define H\"older sets and list their most important properties.
\Cref{sec:smooth} is devoted to the $\cC^\infty$ case.
The real analytic case is treated in \Cref{sec:analytic}. 
Here (in \Cref{ssec:2plot}) we also investigate maps that respect $2$-dimensional real analytic plots (without presupposing smoothness) 
and obtain a corresponding exponential law. In general, this class of maps strictly contains all $\AA$ maps, but on open sets and Lipschitz sets 
in $\R^d$ the two classes coincide. In \Cref{ssec:hol}, we comment briefly on the holomorphic case.
The ultradifferentiable case $\cE^{\{\om\}}$ is studied in \Cref{sec:ultra}. 
We use a result which was proved for Denjoy--Carleman classes by \cite{BelottoBierstoneChow17} (see also \cite{ChaumatChollet99}).
An adaptation of their result to our setting is proved in the appendix, see \Cref{t:A1}.

%-----------------------------------------------------------------------------------------------------------------------
\subsection{Notation}
%-----------------------------------------------------------------------------------------------------------------------
For a locally convex vector space $E$, we denote by $E^*$ (resp.\ $E'$) 
the dual space consisting of all continuous (resp.\ bounded) linear functionals on $E$. 

If $\cS$ is a regularity class (e.g., $\cC^\infty$, $\cC^\om$, $\cE^{\{\om\}}$) 
and $X$ is a nonempty subset of $\R^d$, then $\cS(\R,X)$ denotes the set of $\cS$ curves $c : \R \to \R^d$ 
that lie in $X$, i.e., $c(\R) \subseteq X$.

The euclidean open ball in $\R^d$ with radius $r$ and center $a$ is denoted by $B(a,r)$ 
and $\ol B(a,r)$ denotes its closure.

For a map $f : X \times Y \to Z$ defined on a product, we denote by $f^\vee : X \to Z^Y$ the map defined by 
$f^\vee(x)(y) := f(x,y)$. Conversely, given $g : X \mapsto Z^Y$, the map $g^\wedge : X \times Y \to Z$ is 
defined by $g^\wedge(x,y):= g(x)(y)$.

For a map $f : X \to Y$ we have the push-forward $f_* : X^Z \to Y^Z$, $f_*(g) = f \o g$, and the pull-back 
$f^* : Z^Y \to Z^X$, $f^*(g)=g \o f$. 

The evaluation map $\ev : Y^X \times X \to Y$ is defined by $\ev(f,x) = f(x)$. For $x \in X$, $\ev_x : Y^X \to Y$ 
is given by $\ev_x(f) = f(x)$.

%-----------------------------------------------------------------------------------------------------------------------
\section{Preliminaries} \label{sec:pre}
%-----------------------------------------------------------------------------------------------------------------------

%-----------------------------------------------------------------------------------------------------------------------
\subsection{Convenient vector spaces and \texorpdfstring{$c^\infty$}{c infty}-topology} \label{ssec:convenient}
%-----------------------------------------------------------------------------------------------------------------------

Let us recall some of the fundamentals of \emph{convenient analysis}. The main reference is the book \cite{KM97}, 
see also the three appendices in \cite{KMRc} for a brief overview.

A locally convex vector space $E$ is called a \emph{convenient vector space} if it is \emph{$c^\infty$-complete}, i.e.,
a curve $c$ in $E$ is smooth if and only if $\ell \o c$ is smooth for all $\ell \in E^*$ (or equivalently $\ell \in E'$).
An equivalent condition is that each \emph{Mackey Cauchy sequence} $(x_n)$ (i.e., $\mu_{mn}(x_m-x_n)$ is bounded for some 
real sequence $\mu_{mn} \to \infty$)
converges in $E$.

Let $E$ be a locally convex vector space. The final topology with respect to all smooth curves in $E$ is called 
\emph{$c^\infty$-topology}. Equivalently, it is the final topology with respect to all \emph{Mackey convergent} 
sequences $x_n \to x$, i.e., there is a real positive sequence $\mu_n \to \infty$ such that 
$\mu_n (x_n-x)$ is bounded; in this case, we say that $x_n$ is $\mu_n$-convergent to $x$. 

In general, the $c^\infty$-topology is finer than the given locally convex topology and it is not a vector space topology.
On Fr\'echet spaces the two topologies coincide.

For smooth, real analytic, and holomorphic convenient analysis in convenient vector spaces, see \cite{KM97}, 
and for ultradifferentiable convenient analysis, see \cite{KMRc,KMRq,KMRu} and \cite{Schindl14a}. 
Let us point out (since this will be used several times) 
that (multi)linear maps between convenient vector spaces are smooth, real analyic, and of ultradifferentiable 
class $\cE^{\{\om\}}$ if and only if they are bounded (see \cite[5.5 and 11.13]{KM97} and \cite[Proposition 8.3]{KMRu}).

%-----------------------------------------------------------------------------------------------------------------------
\subsection{Uniform boundedness principle}
%-----------------------------------------------------------------------------------------------------------------------

Let $E$ be a locally convex space and let $\cS$ be a point separating set of bounded linear maps with common domain $E$.
Following \cite[5.22]{KM97}, we say that \emph{$E$ satisfies the uniform $\cS$-boundedness principle} if  
any linear map $T : F \to E$ on a convenient vector space $F$ is bounded provided that $\ell \o T$ is bounded for all $\ell \in \cS$.

By \cite[5.24]{KM97}, any locally convex space $E$ that is webbed satisfies the uniform $\cS$-boundedness principle 
for any point separating family $\cS \subseteq E'$.

For later reference, we recall the following stability result.

\begin{lemma}[{\cite[5.25]{KM97}}] \label[l]{l:stab}
    Let $\mathcal{F}$ be a set of bounded linear
    maps $f:E\to E_f$ between locally convex spaces, let $\mathcal{S}_f$
    be a point separating set of bounded linear maps on $E_f$ for every
    $f\in \mathcal{F}$, and let $\mathcal{S}:=
    \{g\o f: f\in \mathcal{F}, g\in \mathcal{S}_f\}$. If $\mathcal{F}$ generates the
    bornology and $E_f$ satisfies
    the uniform $\mathcal{S}_f$-boundedness principle for all $f\in \mathcal{F}$, then
    $E$ satisfies the uniform $\mathcal{S}$-boundedness principle.
\end{lemma}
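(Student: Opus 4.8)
The plan is to unwind the definition of the uniform boundedness principle and reduce the claim to a direct composition argument that feeds the two hypotheses in sequence. Let $T : F \to E$ be a linear map from a convenient vector space $F$ such that $\ell \o T$ is bounded for every $\ell \in \cS$; the goal is to show that $T$ itself is bounded, which is precisely what the uniform $\cS$-boundedness principle for $E$ demands.

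First I would fix $f \in \cF$ and consider the composite $f \o T : F \to E_f$. For each $g \in \cS_f$ we have $g \o (f \o T) = (g \o f) \o T$, and $g \o f$ is by definition an element of $\cS$, so the hypothesis on $T$ guarantees that $g \o (f \o T)$ is bounded. Thus $f \o T$ is a linear map on the convenient vector space $F$ whose composite with every $g \in \cS_f$ is bounded. Since $E_f$ satisfies the uniform $\cS_f$-boundedness principle by assumption, this forces $f \o T$ to be bounded.

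Next I would promote boundedness of all the composites $f \o T$ to boundedness of $T$ using that $\cF$ generates the bornology of $E$. Concretely, for any bounded set $B \subseteq F$, each image $f(T(B)) = (f \o T)(B)$ is bounded in $E_f$ by the previous step; since $\cF$ generates the bornology, a subset of $E$ is bounded exactly when all its images under the maps in $\cF$ are bounded, and hence $T(B)$ is bounded in $E$. As $B$ was an arbitrary bounded set, $T$ is bounded, completing the argument.

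The reasoning is essentially formal, so I do not anticipate a serious obstacle; the only point requiring a little care is checking that $\cS$ is genuinely point separating on $E$, so that the conclusion "$E$ satisfies the uniform $\cS$-boundedness principle" is even meaningful. This is forced by the assumption that $\cF$ generates the bornology: if $f(x)=0$ for all $f \in \cF$, then the sequence $(n x)_n$ has image $\{0\}$ under every $f$, hence is bounded in $E$, which in a Hausdorff locally convex space forces $x=0$; thus $\cF$ is point separating, and combining this with the point separation of each $\cS_f$ shows that for any nonzero $x \in E$ some $g \o f \in \cS$ does not vanish at $x$.
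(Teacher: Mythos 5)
Your argument is correct and is essentially the standard proof of \cite[5.25]{KM97}; the paper itself only cites this lemma without reproving it. The two-step reduction (apply the $\cS_f$-principle to $f\o T$, then use that $\cF$ generates the bornology to conclude $T$ is bounded) is exactly the intended argument, and your extra check that $\cS$ is point separating is a worthwhile detail that the reference leaves implicit.
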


%-----------------------------------------------------------------------------------------------------------------------
\subsection{H\"older sets} \label{ssec:Hoelder}
%-----------------------------------------------------------------------------------------------------------------------

A closed fat set $X \subseteq \R^d$ is a \emph{H\"older set} (resp.\ a \emph{Lipschitz set}) if its interior $X^\o$ 
has the \emph{uniform $\al$-cusp property} for some $\al \in (0,1]$ (resp.\ for $\al=1$), i.e., 
for each $x \in \p X$ there exist $\ep,h,r>0$ and $A \in \on{O}(d)$ such that $y + A\,\Ga^\al_d(r,h) \subseteq X^\o$ for all $y \in X \cap B(x,\ep)$,
where 
\[
    \Ga^\al_d(r,h) := \{ (x',x_d) \in \R^{d-1} \times \R : |x'|< r,\, h \cdot(\tfrac{|x'|}{r})^\al < x_d <h\} 
\]
is a truncated open $\al$-cusp of radius $r$ and height $h$. 
This is equivalent to $X^\o$ having $\al$-H\"older boundary 
(i.e., in local orthogonal coordinates, $X^\o = \{x_d > a(x')\}$ and $\p X^\o = \{x_d=a(x')\}$, where $a$ is an $\al$-H\"older function). 

H\"older sets are simple, $(1/\al)$-regular (if the $\al$-cusp property holds), and their $c^\infty$-topology 
coincides with the trace topology from $\R^d$; see \cite{Rainer18} and \cite{Rainer:2021tr} for details and examples.

%-----------------------------------------------------------------------------------------------------------------------
\subsection{Subanalytic sets}
%-----------------------------------------------------------------------------------------------------------------------
A subset $X$ of a real analytic manifold $M$ is called \emph{subanalytic} if each point in $M$
has a neighborhood $U$ such that $X \cap U$ is a projection of a relatively compact \emph{semianalytic} (i.e., locally described by finitely many analytic equations and 
inequalities) set. See e.g.\ \cite{BM88} which serves as a reference for the main properties of subanalytic sets.

%-----------------------------------------------------------------------------------------------------------------------
\section{Smooth maps} \label{sec:smooth}
%-----------------------------------------------------------------------------------------------------------------------

%-----------------------------------------------------------------------------------------------------------------------
\subsection{Arc-smooth functions}
%-----------------------------------------------------------------------------------------------------------------------
Let $X \subseteq \R^d$ be nonempty.
We equip the vector space 
\[
    \AS(X) = \{f : X \to \R : f_* \cC^\infty(\R,X) \subseteq \cC^\infty(\R,\R)\}
\]
with the initial locally convex structure with respect to the family of maps
\begin{equation} \label{e:str}
    \AS(X) \stackrel{c^*}{\longrightarrow} \cC^\infty(\R,\R), \quad c \in \cC^\infty(\R,X),
\end{equation}
where $\cC^\infty(\R,\R)$ carries the topology of compact convergence in each derivative separately.
Then the space $\AS(X)$ is $c^\infty$-closed in the product $\prod_{c\in \cC^\infty(\R,X)} \cC^\infty(\R,\R)$ and thus a convenient vector space.

\begin{lemma} \label[l]{l:ubA}
    $\AS(X)$ satisfies the uniform $\cS$-boundedness principle for $\cS=\{\on{ev}_{x} : x \in X\}$.
\end{lemma}

\begin{proof}
    The assertion follows from \Cref{l:stab} and the fact that $\cC^\infty(\R,\R)$ is a Fr\'echet space, and hence webbed;
    note that $\{\on{ev}_t \o c^* : c\in \cC^\infty(\R,X), t \in \R \}= \{\on{ev}_x : x \in X\}$.
\end{proof}

\begin{remark} \label[r]{r:Froelicher}
    Note that $(X,\cC^\infty(\R,X),\AS(X))$ is the unique Fr\"olicher space generated by the inclusion $X \to \R^d$; see \cite[23.1]{KM97}. 

    If $\R^d$ carries its natural diffeology, then $\AS(X)$ is the space of smooth maps $X \to \R$ in the category of diffeological spaces,
    where $X \subseteq \R^d$ is endowed with the subspace diffeology; see \cite{Iglesias-Zemmour:2013aa}.
    Indeed, a map $f : X \to \R$ is smooth if and only if $f \o p$ is $\cC^\infty$ for all $\cC^\infty$ maps $p : U \to \R^d$ with $p(U) \subseteq X$,
    where $U$ is an open subset of some $\R^n$. By Boman's theorem, these are precisely the functions $f \in \AS(X)$.
\end{remark}

%-----------------------------------------------------------------------------------------------------------------------
\subsection{The space \texorpdfstring{$\cC^\infty(X)$}{Cinfty(X)}}
%-----------------------------------------------------------------------------------------------------------------------

Let $X \subseteq \R^d$ be closed and nonempty.
Let 
\[
    \cC^\infty(X) := \{f : X \to \R: f= F|_X \text{ for some } F \in \cC^\infty(\R^d)\} 
\]
be endowed with the quotient topology; 
then $\cC^\infty(X)$ is a Fr\'echet space.

Let $X \subseteq \R^d$ be a H\"older set or a simple closed fat subanalytic set. 
Then $X$ is \emph{$\cC^\infty$ determining} (see \cite{Plesniak:1990aa}) in the sense that, 
for each $f \in \cC^\infty(\R^d)$, $f|_X = 0$ implies $\p^\al f|_X=0$ for all $\al \in \N^d$.
Moreover, every $g \in \cC^\infty(X^\o)$ such that all $\p^\al g$, $\al \in \N^d$, extend continuously to $X$ 
is the restriction of a $\cC^\infty$ function on $\R^d$, see \cite[Lemma 1.10]{Rainer18}.
It follows that $\cC^\infty(X)$ is isomorphic to the space of Whitney jets of class $\cC^\infty$
and, moreover, the topology is determined by the seminorms
\[
    \|f\|_{K,\ell} := \sup_{x \in K \cap X^\o} \sup_{|\al|\le \ell} |\p^\al f(x)|, \quad \ell \in \N,\, K \subseteq X \text{ compact}. 
\]
Furthermore, there is a continuous linear extension operator $E : \cC^\infty(X) \to \cC^\infty(\R^d)$, so that 
$E(f)|_X = f$; see \cite{Bierstone78} and also \cite{Frerick:2007aa}.

\begin{lemma} \label[l]{l:ubC}
    $\cC^\infty(X)$ satisfies the uniform $\cS$-boundedness principle for $\cS=\{\on{ev}_{x} : x \in X\}$.
\end{lemma}

\begin{proof}
    $\cC^\infty(X)$ is a Fr\'echet space and thus webbed. Clearly, the point evaluations are bounded and point separating on $\cC^\infty(X)$. 
\end{proof}

%-----------------------------------------------------------------------------------------------------------------------
\subsection{Arc-smooth functions have smooth extensions}
%-----------------------------------------------------------------------------------------------------------------------

\begin{theorem} \label[t]{t:smooth1}
    Let $X \subseteq \R^d$ be a H\"older set or a simple closed fat subanalytic set. Then
    \begin{equation} \label{e:ASt} 
        \AS(X) = \cC^\infty(X)
    \end{equation}
    and the identity is a bornological isomorphism.
\end{theorem}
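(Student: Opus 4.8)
The plan is to take the set-theoretic equality \eqref{e:ASt} as already established — it is precisely \eqref{e:AS} from \cite{Rainer18} — so that the identity makes sense as a linear bijection between the two spaces, and then to upgrade it to a bornological isomorphism by showing that both $\id \colon \cC^\infty(X) \to \AS(X)$ and $\id \colon \AS(X) \to \cC^\infty(X)$ are bounded. Both spaces are convenient vector spaces: $\cC^\infty(X)$ is Fr\'echet, and $\AS(X)$ is $c^\infty$-closed in the product $\prod_{c} \cC^\infty(\R,\R)$. Moreover, by \Cref{l:ubA} and \Cref{l:ubC} each of them satisfies the uniform $\cS$-boundedness principle for the common family $\cS=\{\ev_x : x \in X\}$. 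This is exactly the situation in which the uniform boundedness principle lets one compare two convenient structures sharing the same point-separating family of bounded functionals.

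For the direction $\id \colon \cC^\infty(X) \to \AS(X)$, I would invoke the uniform $\cS$-boundedness principle for $\AS(X)$ (\Cref{l:ubA}) with domain the convenient space $\cC^\infty(X)$. By that principle it suffices to check that $\ev_x \o \id = \ev_x \colon \cC^\infty(X) \to \R$ is bounded for every $x \in X$; but the point evaluations are continuous on the Fr\'echet space $\cC^\infty(X)$, hence bounded, so $\id$ is bounded.

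For the reverse direction $\id \colon \AS(X) \to \cC^\infty(X)$, I would symmetrically apply the uniform $\cS$-boundedness principle for $\cC^\infty(X)$ (\Cref{l:ubC}) with domain the convenient space $\AS(X)$. Again it suffices to verify that each $\ev_x \colon \AS(X) \to \R$ is bounded, and this is immediate from the definition of the initial structure \eqref{e:str}: taking the constant curve $c \equiv x$, one has $\ev_x = \ev_0 \o c^*$, a composite of the continuous structure map $c^*$ with the continuous evaluation $\ev_0$ on $\cC^\infty(\R,\R)$. This is the very identity of families already used in the proof of \Cref{l:ubA}, so $\ev_x$ is continuous, hence bounded, and $\id$ is bounded.

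Combining the two directions yields that the identity is bounded with bounded inverse, i.e.\ a bornological isomorphism. I expect no genuine obstacle in this argument: the entire analytic difficulty — that an arc-smooth function on $X$ extends to a $\cC^\infty$ function on $\R^d$ — is absorbed into \eqref{e:AS} from \cite{Rainer18}. What remains is the standard \emph{soft} step that, once two convenient structures share the same point-separating family of bounded point evaluations and each satisfies the uniform boundedness principle for that family, their bornologies coincide.
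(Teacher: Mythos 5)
Your argument is correct and is essentially the paper's own proof: the set-theoretic identity is quoted from \cite{Rainer18}, and both directions of boundedness are obtained from the uniform $\cS$-boundedness principles of \Cref{l:ubA} and \Cref{l:ubC}, with point evaluations on $\AS(X)$ handled via constant curves exactly as you do. The only difference is that the paper additionally records a direct estimate (via a continuous linear extension operator and a chain-rule bound) showing that the inclusion $\cC^\infty(X)\subseteq\AS(X)$ is even continuous, not merely bounded, but this is presented as an optional alternative to the soft argument you give.
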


\begin{proof}
    The set-theoretic identity \eqref{e:ASt} was established in \cite{Rainer18}, see also
    \cite{Rainer:2021tr}.

    Boundedness of the inclusion $\cC^\infty(X) \subseteq \AS(X)$ follows from \Cref{l:ubA}.
    We give an alternative argument that shows that the inclusion is even continuous.
    For any fixed $c \in \cC^\infty(\R,X)$ we have to show that the linear map $c^* : \cC^\infty(X) \to \cC^\infty(\R,\R)$ is continuous.
    Let $I= [-r,r]$ and $R>0$ such that $c(I) \subseteq B(0,R)$.
    Let $f \in \cC^\infty(X)$ and let $F := E(f) \in \cC^\infty(\R^d)$, 
    where $E : \cC^\infty(X) \to \cC^\infty(\R^d)$ is a continuous linear extension operator.
    Then (see e.g.\ \cite[Lemma A.5]{Parusinski:2024ab}), for all $k$, 
    \[
        \|c^* (F)\|_{C^k(I,\R^d)} \le C(k) \, \|F\|_{C^k(\ol B(0,R))} (1+ \|c\|_{C^k(I,\R^d)})^k
    \]
    where $\|g\|_{C^k(\ol U,\R^d)} := \max_{0\le j \le k} \sup_{x \in U} \|d^jg(x)\|_{L_j(\R^n,\R^d)}$ and 
    $U \subseteq \R^n$ is either $(-r,r) \subseteq \R$ or $B(0,R) \subseteq \R^d$.
    Now $c^* (F)=c^* (f)$ and there exist $C>0$, $\ell \in \N$, and a compact subset $K \subseteq X$ such that 
    \[
        \|F\|_{C^k(\ol B(0,R))} \le C\, \|f\|_{K,\ell}.
    \]
    This implies the assertion.

    To see that the inclusion $\AS(X) \subseteq \cC^\infty(X)$ is bounded, we have to check, by \Cref{l:ubC},
    that $\AS(X) \ni f \mapsto f(x) \in \R$ is bounded for each $x \in X$.
    This follows from \eqref{e:str}, using the constant curves $c_x : t \mapsto x$.
\end{proof}

%-----------------------------------------------------------------------------------------------------------------------
\subsection{The vector valued case}
%-----------------------------------------------------------------------------------------------------------------------

Let $X \subseteq \R^d$ be nonempty and $E$ a convenient vector space.
Consider
\[
    \AS(X,E) := \{f : X \to E : f_*\cC^\infty(\R,X) \subseteq \cC^\infty(\R,E)\},
\]
and equip $\AS(X,E)$ with the initial locally convex structure with respect to the family of maps 
\[
    \AS(X,E) \stackrel{\ell_* \o c^*}{\longrightarrow} \cC^\infty(\R,\R), \quad c\in \cC^\infty(\R,X), ~ \ell \in E^*.
\]
Then the space $\AS(X,E)$ is $c^\infty$-closed in the product $\prod_{c \in \cC^\infty(\R,X),\ell \in E^*} \cC^\infty(\R,\R)$ 
and thus a convenient vector space.

\begin{lemma} \label[l]{l:ubAE}
    $\AS(X,E)$ satisfies the uniform $\cS$-boundedness principle for $\cS=\{\on{ev}_{x} : x \in X\}$.
\end{lemma}

\begin{proof}
    This follows from \Cref{l:ubA} and \Cref{l:stab}.
\end{proof}

Let $X \subseteq \R^d$ be closed and nonempty. 
We define 
\[
    \cC^\infty(X,E) := \{ f : X \to E : \ell \o f \in \cC^\infty(X) \text{ for all } \ell \in E^*\}
\]
and endow $\cC^\infty(X,E)$ with the initial locally convex structure with respect to the family of maps 
\[
    \cC^\infty(X,E) \stackrel{\ell_*}{\longrightarrow} \cC^\infty(X), \quad \ell \in E^*.
\]
Then $\cC^\infty(X,E)$ is a convenient vector spaces since it is $c^\infty$-closed in the product $\prod_{\ell \in E^*} \cC^\infty(X)$.

\begin{lemma} \label[l]{l:ubCE}
    $\cC^\infty(X,E)$ satisfies the uniform $\cS$-boundedness principle for $\cS=\{\on{ev}_{x} : x \in X\}$.
\end{lemma}

\begin{proof}
    This follows from \Cref{l:ubC} and \Cref{l:stab}.
\end{proof}

We are ready to deduce a vector valued version of \Cref{t:smooth1}.

\begin{theorem} \label[t]{t:smooth2}
    Let $X \subseteq \R^d$ be a H\"older set or a simple closed fat subanalytic set and $E$ a convenient vector space. Then
    \begin{equation} \label{e:ASE}
        \AS(X,E) = \cC^\infty(X,E)
    \end{equation}
    and the identity is a bornological isomorphism.
\end{theorem}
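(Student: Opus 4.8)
We want to show that the set-theoretic identity $\AS(X,E) = \cC^\infty(X,E)$ from \eqref{e:ASE} is a bornological isomorphism, given the scalar result (\Cref{t:smooth1}), the two uniform boundedness lemmas (\Cref{l:ubAE} and \Cref{l:ubCE}), and the defining initial structures on both spaces. The strategy is the standard convenient-analysis move: reduce the vector-valued statement to the already-established scalar statement by post-composing with functionals $\ell \in E^*$, and use the uniform $\cS$-boundedness principle (with $\cS = \{\ev_x : x \in X\}$) to upgrade componentwise boundedness to boundedness of the relevant linear maps.

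**Plan for the set-theoretic identity.** First I would verify that the two sets coincide, as a sanity check that the scalar theorem lifts. A map $f : X \to E$ lies in $\AS(X,E)$ iff $f_*\cC^\infty(\R,X) \subseteq \cC^\infty(\R,E)$; since $E$ is convenient, a curve in $E$ is smooth iff all its composites with $\ell \in E^*$ are smooth, so $f \in \AS(X,E)$ iff $\ell \o f \in \AS(X)$ for every $\ell \in E^*$. By \Cref{t:smooth1} this is equivalent to $\ell \o f \in \cC^\infty(X)$ for all $\ell \in E^*$, which is exactly the defining condition for $f \in \cC^\infty(X,E)$. This establishes \eqref{e:ASE} as an equality of sets.

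**Plan for bornological equivalence.** It remains to show the identity map is bounded in both directions. For the inclusion $\cC^\infty(X,E) \to \AS(X,E)$: by the definition of the initial structure on $\AS(X,E)$ it suffices to check that each composite $\ell_* \o c^* : \cC^\infty(X,E) \to \cC^\infty(\R,\R)$ is bounded for $c \in \cC^\infty(\R,X)$, $\ell \in E^*$. Since $\ell_* \o c^*$ factors through $\ell_* : \cC^\infty(X,E) \to \cC^\infty(X)$ followed by $c^* : \cC^\infty(X) \to \cC^\infty(\R,\R)$, boundedness follows: $\ell_*$ is bounded by the initial structure on $\cC^\infty(X,E)$, and $c^*$ is bounded (indeed continuous) by the scalar \Cref{t:smooth1}. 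For the reverse inclusion $\AS(X,E) \to \cC^\infty(X,E)$, I would again reduce to the initial structure: it suffices that each $\ell_* : \AS(X,E) \to \cC^\infty(X)$ is bounded. Here I invoke the uniform boundedness principle on $\cC^\infty(X)$ (\Cref{l:ubC}): the map $\ell_* : \AS(X,E) \to \cC^\infty(X)$ is bounded provided $\ev_x \o \ell_*$ is bounded on $\AS(X,E)$ for every $x \in X$. But $\ev_x \o \ell_* = \ell \o \ev_x$, and $\ev_x : \AS(X,E) \to E$ is bounded by \Cref{l:ubAE} together with the initial structure (the constant curve argument), while $\ell$ is bounded linear; hence the composite is bounded.

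**Where the care is needed.** The routine part is the factorization bookkeeping; the step deserving attention is the correct application of the uniform $\cS$-boundedness principle to promote pointwise/componentwise boundedness to boundedness of $\ell_*$ into the Fréchet space $\cC^\infty(X)$. One must check that $\AS(X,E)$ is a convenient vector space (so that the linear map $\ell_*$ has convenient domain, as required by the principle) — this is already recorded in the excerpt via $c^\infty$-closedness in the product — and that the family $\{\ev_x : x \in X\}$ genuinely separates points and consists of bounded maps on $\cC^\infty(X)$, which is \Cref{l:ubC}. With these in hand the argument closes, and boundedness in both directions yields the claimed bornological isomorphism.
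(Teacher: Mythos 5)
Your proposal is correct and follows essentially the same route as the paper: the set-theoretic identity is obtained by composing with functionals $\ell \in E^*$ and invoking the scalar case (\Cref{t:smooth1}), and the bornological isomorphism is deduced from the uniform boundedness principles for point evaluations (\Cref{l:ubAE}, \Cref{l:ubCE}). Your extra factorization of $\ell_* \o c^*$ through the continuity of $c^*$ on $\cC^\infty(X)$ is just a more explicit rendering of one direction, mirroring the ``alternative argument'' the paper already gives in the scalar case.
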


\begin{proof}
    The set-theoretic identity \eqref{e:ASE} 
    follows from \Cref{t:smooth1}:
    \begin{align*}
        f \in \AS(X,E) 
   &\Leftrightarrow \forall \ell \in E^* ~\forall  c \in \cC^\infty(\R,X): \ell \o f \o c \in \cC^\infty(\R,\R) 
   \\
   &\Leftrightarrow \forall \ell \in E^* : \ell \o f  \in \AS(X) 
   \\
   &\Leftrightarrow \forall \ell \in E^* : \ell \o f  \in \cC^\infty(X) 
   \\
   &\Leftrightarrow  f \in \cC^\infty(X,E). 
    \end{align*}

    That the identity \eqref{e:ASE} is a bornological isomorphism is a consequence of the fact that 
    $\AS(X,E)$ and $\cC^\infty(X,E)$ both satisfy the uniform boundedness principle with respect to point evaluations, see \Cref{l:ubAE} and \Cref{l:ubCE}. 
\end{proof}

\begin{remark} \label[r]{r:derext}
    In the setting of \Cref{t:smooth2},
    each $f \in \AS(X,E)$ is of class $\cC^\infty$ in the interior $X^\o$ and all derivatives $(f|_{X^\o})^{(n)} : X^\o \to L^n(\R^d,E)$ extend 
    continuously to $X$. This can be seen by repeating the proof in \cite{Rainer18} for the vector valued case (see also \cite{Kriegl97}).
\end{remark}

%-----------------------------------------------------------------------------------------------------------------------
\subsection{Exponential laws}
%-----------------------------------------------------------------------------------------------------------------------

\begin{lemma} \label[l]{l:prod}
    Let $X_i \subseteq \R^{d_i}$, $i=1,2$, 
    be simple closed fat subanalytic sets. Then also $X_1 \times X_2$ is simple closed fat subanalytic. 
\end{lemma}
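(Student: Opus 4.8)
The plan is to verify the four defining properties---closed, fat, subanalytic, and simple---separately, in roughly increasing order of content. The first three are essentially formal. Closedness is immediate, since a finite product of closed sets is closed. For subanalyticity I would invoke the standard stability of the class of subanalytic sets under finite products: writing each $X_i$ locally as a projection of a relatively compact semianalytic set, the product $X_1 \times X_2$ is locally a projection of the product of these semianalytic sets, and a product of relatively compact semianalytic sets is again relatively compact and semianalytic (see \cite{BM88}). Fatness rests on the elementary product identities $(X_1 \times X_2)^\o = X_1^\o \times X_2^\o$ and $\ol{A \times B} = \ol A \times \ol B$: combining them gives
\[
    \ol{(X_1 \times X_2)^\o} = \ol{X_1^\o \times X_2^\o} = \ol{X_1^\o} \times \ol{X_2^\o} = X_1 \times X_2,
\]
where the last equality uses that each $X_i$ is fat.

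The step carrying the actual content is simplicity, and this is where I would concentrate the argument. Fix a point $(x_1,x_2) \in X_1 \times X_2$. By simplicity of each $X_i$, choose a neighborhood basis $\sU_i$ of $x_i$ in $\R^{d_i}$ such that $X_i^\o \cap U_i$ is connected for every $U_i \in \sU_i$; these intersections are nonempty because $X_i$ is fat, so there is no degenerate empty case to worry about. The products $\{U_1 \times U_2 : U_i \in \sU_i\}$ form a neighborhood basis of $(x_1,x_2)$ in $\R^{d_1} \times \R^{d_2}$. For such a basic neighborhood the interior formula yields
\[
    (X_1 \times X_2)^\o \cap (U_1 \times U_2) = (X_1^\o \cap U_1) \times (X_2^\o \cap U_2),
\]
which is a product of two connected sets and hence connected. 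Thus $(x_1,x_2)$ admits a neighborhood basis along which the interior trace is connected, i.e.\ $X_1 \times X_2$ is simple.

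I do not expect a genuine obstacle: each ingredient (product of closed is closed, the interior/closure product identities, stability of subanalytic sets under products, and connectedness of a product of connected sets) is standard. The one point to keep in mind is that the definition of simplicity only asks for \emph{some} neighborhood basis with connected interior traces, so it suffices---and is essential---to work with the product basis $\{U_1 \times U_2\}$ rather than arbitrary neighborhoods; the product formula $(X_1 \times X_2)^\o = X_1^\o \times X_2^\o$ is precisely what makes this chosen basis do the job.
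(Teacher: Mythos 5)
Your proposal is correct and follows essentially the same route as the paper: the same product identities for closure and interior give fatness, and the same product neighborhood basis argument gives simplicity, with closedness and subanalyticity dispatched as standard facts (the paper simply says ``clearly'' where you spell out the subanalytic stability). The only cosmetic difference is that you take neighborhood bases in $\R^{d_i}$ rather than in $X_i$, which is equivalent.
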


\begin{proof}
    Clearly, the product $X_1 \times X_2$ is closed and subanalytic.
    It is fat because
    \[
        \ol {(X_1 \times X_2)^\o} = \ol {X_1^\o \times X_2^\o} = \ol{X_1^\o} \times  \ol{X_2^\o} = X_1 \times X_2.
    \]
    Let us check that $X_1 \times X_2$ is simple. 
    Fix $(x_1,x_2) \in X_1 \times X_2$. Since $X_1$ and $X_2$ are simple, for $i=1,2$, 
    there exist bases of neighborhoods $\sU_i$ of $x_i$ in $X_i$ such that $U_i \cap X_i^\o$ is connected for all $U_i \in \sU_i$.
    Then $\{U_1 \times U_2 : U_1 \in \sU_1, U_2 \in \sU_2\}$ is a basis of neighborhoods of $(x_1,x_2)$ in $X_1 \times X_2$ and for all $U_1 \in \sU_1$, $U_2 \in \sU_2$,
    \[
        (U_1 \times U_2) \cap (X_1 \times X_2)^\o = (U_1 \cap X_1^\o) \times  (U_2 \cap X_2^\o)
    \]
    is connected.
\end{proof}

\begin{theorem} \label[t]{t:exp}
    Let $X_i \subseteq \R^{d_i}$, $i=1,2$, 
    be simple closed fat subanalytic sets.
    Let $E$ be a convenient vector space.
    Then the following exponential laws hold as bornological isomorphisms:
    \begin{enumerate}
        \item  $\AS(X_1,\AS(X_2,E)) \cong \AS(X_1 \times X_2,E)$;
        \item $\cC^\infty(X_1,\cC^\infty(X_2,E)) \cong \cC^\infty(X_1 \times X_2,E)$.
    \end{enumerate}
\end{theorem}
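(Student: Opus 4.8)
The plan is to deduce the smooth exponential law (2) from the arc-smooth exponential law (1) together with the bornological identity $\AS = \cC^\infty$ of \Cref{t:smooth2}, and to prove (1) directly while exploiting that same identity on the product. For the reduction, note that $\AS(X_2,E) = \cC^\infty(X_2,E)$ as convenient vector spaces by \Cref{t:smooth2}; writing $F := \cC^\infty(X_2,E) = \AS(X_2,E)$ and applying \Cref{t:smooth2} once more to $X_1$ with the convenient coefficient space $F$ would give
\[
    \cC^\infty(X_1,\cC^\infty(X_2,E)) = \cC^\infty(X_1,F) = \AS(X_1,F) = \AS(X_1,\AS(X_2,E))
\]
as bornological identities. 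Since $X_1 \times X_2$ is simple closed fat subanalytic by \Cref{l:prod}, \Cref{t:smooth2} also yields $\AS(X_1 \times X_2,E) = \cC^\infty(X_1 \times X_2,E)$. Hence (2) follows from (1) by composing bornological isomorphisms, and it suffices to establish (1).

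For (1) I would first set up the set-theoretic bijection $f \mapsto f^\vee$, $g \mapsto g^\wedge$. Given $f \in \AS(X_1 \times X_2,E)$, inserting constant curves in the first factor shows $f^\vee(x_1) = f(x_1,\cdot) \in \AS(X_2,E)$, so $f^\vee$ takes values in the correct space. Because $\AS(X_2,E)$ carries the initial structure and is $c^\infty$-closed in $\prod \cC^\infty(\R,\R)$, a curve into it is smooth if and only if all its composites with the structure maps $\ell_* \o c_2^*$ ($c_2 \in \cC^\infty(\R,X_2)$, $\ell \in E^*$) are smooth. Thus for fixed $c_1 \in \cC^\infty(\R,X_1)$ the curve $f^\vee \o c_1$ is smooth precisely when $t \mapsto [\, s \mapsto \ell(f(c_1(t),c_2(s)))\,]$ is a smooth curve into $\cC^\infty(\R,\R)$, equivalently (cartesian closedness of ordinary smooth functions, $\cC^\infty(\R,\cC^\infty(\R,\R)) = \cC^\infty(\R^2,\R)$) when $(t,s) \mapsto \ell(f(c_1(t),c_2(s)))$ is jointly smooth on $\R^2$. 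Here I would invoke \Cref{t:smooth2} on the product: $\ell \o f \in \cC^\infty(X_1 \times X_2)$ extends to some $F_\ell \in \cC^\infty(\R^{d_1+d_2})$, whence $(t,s) \mapsto F_\ell(c_1(t),c_2(s))$ is manifestly jointly smooth, giving $f^\vee \in \AS(X_1,\AS(X_2,E))$. Conversely, if $g \in \AS(X_1,\AS(X_2,E))$, then for any $c = (c_1,c_2) \in \cC^\infty(\R,X_1 \times X_2)$ the joint smoothness of $(t,s) \mapsto \ell(g(c_1(t))(c_2(s)))$ restricts to the diagonal $s=t$ to yield smoothness of $\ell \o g^\wedge \o c$; as $E$ is convenient, this shows $g^\wedge \in \AS(X_1 \times X_2,E)$.

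To upgrade the bijection to a bornological isomorphism I would nest the uniform boundedness principle of \Cref{l:ubAE} twice. Both maps are linear and all spaces are convenient, so boundedness may be tested against point evaluations. For $f \mapsto f^\vee$: by \Cref{l:ubAE} applied to $X_1$ with coefficients $\AS(X_2,E)$ it suffices that $\ev_{x_1} \o (f \mapsto f^\vee) = (f \mapsto f(x_1,\cdot))$ be bounded into $\AS(X_2,E)$ for each $x_1$; by \Cref{l:ubAE} applied to $X_2$ with coefficients $E$ this reduces to boundedness of $f \mapsto \ev_{x_2}(f(x_1,\cdot)) = \ev_{(x_1,x_2)}(f)$, which holds because point evaluations on $\AS(X_1 \times X_2,E)$ are bounded. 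For the inverse $g \mapsto g^\wedge$: by \Cref{l:ubAE} applied to $X_1 \times X_2$ it suffices that $\ev_{(x_1,x_2)}(g^\wedge) = \ev_{x_2}(\ev_{x_1}(g))$ be bounded, and this is a composite of the bounded point evaluations on $\AS(X_1,\AS(X_2,E))$ and on $\AS(X_2,E)$. This establishes (1), and then (2).

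The main obstacle is the forward direction of the set-theoretic bijection. Arc-smoothness of $f$ only constrains composites $f \o c$ along single one-parameter curves into the product, whereas identifying $f^\vee$ as arc-smooth requires joint smoothness of $(t,s) \mapsto \ell(f(c_1(t),c_2(s)))$, and separate smoothness in $t$ and in $s$ does not by itself deliver joint smoothness. This gap is bridged precisely by the already-established identity $\AS(X_1 \times X_2,E) = \cC^\infty(X_1 \times X_2,E)$ of \Cref{t:smooth2}, which supplies an honest smooth extension $F_\ell$ on $\R^{d_1+d_2}$; it is exactly here that the hypotheses enter, via \Cref{l:prod} ensuring the product is again simple closed fat subanalytic. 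Once joint smoothness is available, the remaining steps are formal consequences of cartesian closedness of ordinary smooth functions and the uniform boundedness machinery.
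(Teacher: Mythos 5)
Your proof is correct, but it takes a genuinely different route from the paper for part (1). The paper simply observes that (1) is the exponential law in the category of Fr\"olicher spaces (\cite[23.2 and 23.4]{KM97}), with $X_i$ the Fr\"olicher spaces of \Cref{r:Froelicher}; this requires no hypotheses on the $X_i$ at all, and the paper records in a remark that (1) holds for arbitrary sets. You instead give a hands-on curve-testing argument whose forward direction invokes \Cref{t:smooth2} on the product to manufacture a smooth extension $F_\ell$ of $\ell\o f$ to $\R^{d_1+d_2}$. This works, but the ``main obstacle'' you identify is not actually there: joint smoothness of $(t,s)\mapsto \ell(f(c_1(t),c_2(s)))$ follows directly from arc-smoothness via Boman's theorem, since for every smooth curve $\ga=(\ga_1,\ga_2)$ in $\R^2$ the composite $t\mapsto (c_1(\ga_1(t)),c_2(\ga_2(t)))$ is itself a smooth curve in $X_1\times X_2$, so $\ell\o f\o(c_1\times c_2)\o\ga$ is smooth and Boman gives smoothness of $\ell\o f\o(c_1\times c_2)$ on $\R^2$. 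Using the extension theorem here is thus a detour that makes (1) appear to depend on the subanalyticity hypotheses when it does not. Your reduction of (2) to (1) via \Cref{t:smooth2} and \Cref{l:prod} is exactly the paper's argument, and your nested uniform-boundedness argument for the bornological isomorphism is sound (it mirrors what the paper does explicitly in the real analytic case, \Cref{t:expAn}); in the paper's treatment of (1) this step is already subsumed in the cited Fr\"olicher-space exponential law.
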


\begin{proof}
    (1) This is precisely the exponential law in the category of Fr\"olicher spaces (see \cite[23.2 and 23.4]{KM97}), 
    where $X$ is the Fr\"olicher space described in \Cref{r:Froelicher} and $E$ is the Fr\"olicher space generated by the bounded linear functionals (see \cite[23.3]{KM97}).

    (2) By \Cref{t:smooth2} we have a bornological isomorphism $\AS(X_2,E) = \cC^\infty(X_2,E)$ and thus a diffeomorphism in the category of Fr\"olicher spaces.
    Thus, again by \Cref{t:smooth2}, we have bornological isomorphisms
    \begin{align*}
        \AS(X_1,\AS(X_2,E)) = \AS(X_1,\cC^\infty(X_2,E)) = \cC^\infty(X_1,\cC^\infty(X_2,E)),
    \end{align*}
    and, using \Cref{l:prod},
    \begin{align*}
        \AS(X_1 \times X_2,E) = \cC^\infty(X_1 \times X_2,E). 
    \end{align*}
    Then (2) follows from (1).
\end{proof}

\begin{remark}
    In \Cref{t:exp}(1), the sets $X_i$ can actually be arbitrary.
    \Cref{t:exp}(2) remains valid (with the same proof) if $X_1$, $X_2$, and also $X_1 \times X_2$ are H\"older sets, 
    for instance, if $X_1 = \R^n$. 
\end{remark}

\begin{corollary}
    Let $X \subseteq \R^d$ be a H\"older set or a simple closed fat subanalytic set.
    Then we have a bornological isomorphism
    \[
        \cC^\infty(\R^n,\cC^\infty(X)) \cong \cC^\infty(\R^n \times X).
    \]
\end{corollary}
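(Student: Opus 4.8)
The plan is to obtain the corollary as the special case $X_1 = \R^n$, $X_2 = X$, $E = \R$ of \Cref{t:exp}(2). First I would record the harmless identification $\cC^\infty(Y,\R) = \cC^\infty(Y)$, valid for every closed nonempty $Y \subseteq \R^m$: taking $E = \R$ in the definition of $\cC^\infty(Y,E)$ requires $\ell \o f \in \cC^\infty(Y)$ for all $\ell \in \R^* = \R$, which just means $f \in \cC^\infty(Y)$, and the initial structure with respect to the maps $\ell_*$ collapses to the topology of $\cC^\infty(Y)$. Under this identification the left-hand side of \Cref{t:exp}(2) reads $\cC^\infty(\R^n,\cC^\infty(X))$ and the right-hand side reads $\cC^\infty(\R^n \times X)$, which is exactly the asserted bornological isomorphism.

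It then remains to verify that the hypotheses of \Cref{t:exp}(2) are met in each of the two admissible cases for $X$. If $X$ is a simple closed fat subanalytic set, then so is $\R^n$ (it is the whole ambient space, hence trivially closed, fat, simple, and subanalytic), and \Cref{l:prod} shows that $\R^n \times X$ is simple closed fat subanalytic as well; thus \Cref{t:exp}(2) applies directly. If $X$ is a H\"older set, I would instead appeal to the remark following \Cref{t:exp}, which extends \Cref{t:exp}(2) to the case where $X_1$, $X_2$, and $X_1 \times X_2$ are all H\"older sets.

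The single point that genuinely needs an argument is that $\R^n \times X$ is a H\"older set whenever $X$ is one, since \Cref{l:prod} is stated only in the subanalytic category; this is the (mild) main obstacle. Now $\R^n$ has empty boundary, so it vacuously satisfies the uniform $\al$-cusp property and is a Lipschitz, hence H\"older, set, and $(\R^n \times X)^\o = \R^n \times X^\o$. Given $x \in \p X$ with witnessing data $\ep,h,r>0$ and $A \in \on{O}(d)$ for the uniform $\al$-cusp property of $X^\o$, I would use $A' := \on{id}_{\R^n} \oplus A \in \on{O}(n+d)$ together with the same parameters $\ep,h,r$. Writing a point of $\Ga^\al_{n+d}(r,h) \subseteq \R^n \times \R^{d-1} \times \R$ as $(u,v',t)$, the estimate $|v'| \le |(u,v')|$ shows that its last $d$ coordinates $(v',t)$ lie in $\Ga^\al_d(r,h)$, so $A'\,\Ga^\al_{n+d}(r,h) \subseteq \R^n \times A\,\Ga^\al_d(r,h)$. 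Hence, for every $(w,y) \in (\R^n \times X) \cap B((z,x),\ep)$, where necessarily $y \in X \cap B(x,\ep)$, we obtain $(w,y) + A'\,\Ga^\al_{n+d}(r,h) \subseteq \R^n \times (y + A\,\Ga^\al_d(r,h)) \subseteq \R^n \times X^\o$, which is the required uniform $\al$-cusp property for $\R^n \times X$. With this in hand the remark applies and the corollary follows.
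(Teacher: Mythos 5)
Your proposal is correct and follows exactly the route the paper intends: the corollary is the special case $X_1=\R^n$, $X_2=X$, $E=\R$ of \Cref{t:exp}(2) in the subanalytic case, and of the remark following \Cref{t:exp} in the H\"older case. Your verification that $\R^n\times X$ inherits the uniform $\al$-cusp property (via $A'=\on{id}_{\R^n}\oplus A$ and the estimate $|v'|\le|(u,v')|$) is a correct filling-in of a detail the paper leaves implicit, so this is the same argument, only slightly more complete.
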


%-----------------------------------------------------------------------------------------------------------------------
\section{Real analytic maps} \label{sec:analytic}
%-----------------------------------------------------------------------------------------------------------------------

%-----------------------------------------------------------------------------------------------------------------------
\subsection{The space \texorpdfstring{$\AA(X)$}{AComega(X)}}
%-----------------------------------------------------------------------------------------------------------------------

Let $X \subseteq \R^d$ be nonempty.
We consider the vector space
\[
    \AA(X) := \{f \in \AS(X) : f_* \cC^\om(\R,X) \subseteq \cC^\om(\R,\R)\}
\]
and endow it with the initial locally convex structure with respect to the family of maps
\begin{align*}
    \AA(X) &\stackrel{c^*}{\longrightarrow} \cC^\infty(\R,\R), \quad c \in \cC^\infty(\R,X), 
    \\
    \AA(X) &\stackrel{c^*}{\longrightarrow} \cC^\om(\R,\R), \quad c \in \cC^\om(\R,X), 
\end{align*}
where $\cC^\om(\R,\R)$ carries the final locally convex topology with respect to the embeddings 
(restrictions) of all spaces of holomorphic maps $\vh : U \to \C$ with $\vh|_\R : \R \to \R$, where $U$ is a neighborhood
of $\R$ in $\C$, with their topology of compact convergence.
Then $\AA(X)$ is a convenient vector space.

\begin{lemma} \label[l]{l:ubAAn}
    $\AA(X)$ satisfies the uniform $\cS$-boundedness principle for $\cS=\{\on{ev}_{x} : x \in X\}$.
\end{lemma}

\begin{proof}
    This follows from \Cref{l:stab} since $\cC^\infty(\R,\R)$ and $\cC^\om(\R,\R)$
    satisfy the uniform boundedness principle with respect to point evaluations; see \cite[Theorem 11.12]{KM97}.
\end{proof}

%-----------------------------------------------------------------------------------------------------------------------
\subsection{The space \texorpdfstring{$\cC^\om(X)$}{Comega(X)}} 
%-----------------------------------------------------------------------------------------------------------------------

Let $X \subseteq \R^d$ be closed and nonempty.
Let $\cC^\om(X)$ be the set of all functions $f : X \to \R$ 
such that there exists an open neighborhood $U$ of $X$ in $\C^d$ and a holomorphic function $F : U \to \C$ 
such that $f = F|_X$. 
We topologize $\cC^\om(X)$ as the inductive limit 
of the Fr\'echet spaces of holomorphic functions $F \in\cH(U)$ such that $F(U \cap \R^d) \subseteq \R$,
where $U$ ranges over the directed set (with respect to inclusion) of open neighborhoods of $X$ in $\C^d$. 

\begin{lemma} \label[l]{l:ubAn}
    $\cC^\om(X)$ satisfies the uniform $\cS$-boundedness principle for $\cS=\{\on{ev}_{x} : x \in X\}$.
\end{lemma}

\begin{proof}
    $\cC^\om(X)$ is webbed since it is an inductive limit of webbed spaces, see \cite[52.13]{KM97}, 
    and $\cS$ is point separating.
\end{proof}

%-----------------------------------------------------------------------------------------------------------------------
\subsection{The spaces \texorpdfstring{$\AA(X)$}{AComega(X)} and \texorpdfstring{$\cC^\om(X)$}{Comega(X)} coincide}
%-----------------------------------------------------------------------------------------------------------------------

\begin{theorem} \label[t]{t:analytic}
    Let $X \subseteq \R^d$ be a H\"older set or a simple closed fat subanalytic set. Then
    \begin{equation} \label{e:AAt} 
        \AA(X) = \cC^\om(X)
    \end{equation}
    and the identity is a bornological isomorphism.
\end{theorem}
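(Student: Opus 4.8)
The plan is to mirror the structure of the proof of \Cref{t:smooth1}. The set-theoretic identity \eqref{e:AAt} is already established in \cite{Rainer18} (see also \cite{Rainer:2024aa}), so the only new content is that the identity is a \emph{bornological isomorphism}. Since both $\AA(X)$ and $\cC^\om(X)$ satisfy the uniform $\cS$-boundedness principle for $\cS = \{\on{ev}_x : x \in X\}$ (by \Cref{l:ubAAn} and \Cref{l:ubAn}), it suffices to prove that the inclusions in both directions are \emph{bounded}, and for this it is enough to check that, for each $x \in X$, the point evaluation $\on{ev}_x$ is bounded on the source space. This is the same strategy that made the smooth case work, with the uniform boundedness principle doing all the heavy lifting of upgrading pointwise boundedness to global boundedness of the linear map.

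For the inclusion $\AA(X) \subseteq \cC^\om(X)$: by \Cref{l:ubAn}, boundedness of this inclusion follows once we know that $\AA(X) \ni f \mapsto f(x) \in \R$ is bounded for each fixed $x \in X$. But this is immediate from the defining initial structure on $\AA(X)$: evaluating along the constant curve $c_x : t \mapsto x$ (which is both $\cC^\infty$ and $\cC^\om$) gives $\on{ev}_x = \on{ev}_0 \o c^*$, and $\on{ev}_0 : \cC^\om(\R,\R) \to \R$ is bounded. Hence $\on{ev}_x$ is bounded on $\AA(X)$, and the uniform $\cS$-boundedness principle for $\cC^\om(X)$ yields boundedness of the inclusion.

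For the reverse inclusion $\cC^\om(X) \subseteq \AA(X)$: by \Cref{l:ubAAn} it again suffices to check that $\cC^\om(X) \ni f \mapsto f(x)$ is bounded for each $x \in X$. On $\cC^\om(X)$, which carries the inductive-limit topology of Fr\'echet spaces of holomorphic functions, each point evaluation $\on{ev}_x$ is continuous (it factors through $F \mapsto F(x)$ on each step $\cH(U)$ with $F(U \cap \R^d)\subseteq\R$), hence bounded. So the uniform $\cS$-boundedness principle for $\AA(X)$ gives boundedness of this inclusion as well. Combining the two directions shows that the set-theoretic identity is a bornological isomorphism.

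The main subtlety, and the step I would be most careful about, is verifying that $\cC^\om(X)$ genuinely satisfies the hypotheses of the uniform boundedness machinery in the inductive-limit setting: unlike the Fr\'echet space $\cC^\infty(X)$, here $\cC^\om(X)$ is an LF-type (countable-directed inductive) limit, so one must rely on \Cref{l:ubAn} (webbedness via \cite[52.13]{KM97}) rather than on metrizability. I would make sure the point evaluations are genuinely continuous on each Fr\'echet step and point separating on the limit, so that \Cref{l:ubAn} applies. Everything else is a routine transcription of the smooth argument, with $\cC^\infty(\R,\R)$ replaced by $\cC^\om(\R,\R)$ and the constant-curve trick supplying pointwise boundedness on the $\AA$ side.
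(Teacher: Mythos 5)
Your proposal is correct and follows essentially the same route as the paper: the paper likewise cites \cite[Corollary 1.17]{Rainer18} for the set-theoretic identity and then invokes \Cref{l:ubAAn} and \Cref{l:ubAn} to conclude that it is a bornological isomorphism, which is exactly the two-directional uniform-boundedness argument you spell out (constant curves on the $\AA$ side, continuity of $\on{ev}_x$ on each Fr\'echet step of the inductive limit on the $\cC^\om$ side). The paper leaves these verifications implicit; your write-up just makes them explicit.
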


\begin{proof}
    The set-theoretic identity \eqref{e:AAt} was proved in \cite[Corollary 1.17]{Rainer18}, 
    see also \cite[Corollary 1.2]{Rainer:2024aa}.
    Then \Cref{l:ubAAn} and \Cref{l:ubAn} imply that it is a bornological isomorphism.
\end{proof}

%-----------------------------------------------------------------------------------------------------------------------
\subsection{The vector valued case}
%-----------------------------------------------------------------------------------------------------------------------

Let $X \subseteq \R^d$ be nonempty
and $E$ a convenient vector space.
We consider the space 
\[
    \AA(X,E) := \{f : X \to E : \ell \o f \in \AA(X) \text{ for all } \ell \in E^*\}  
\]
with the initial locally convex structure with respect to the family of maps 
\[
    \AA(X,E) \stackrel{\ell_*}{\longrightarrow} \AA(X), \quad \ell \in E^*.
\]
Then $\AA(X,E)$ is a convenient vector space.

\begin{lemma} \label[l]{l:ubAAnE}
    $\AA(X,E)$ satisfies the uniform $\cS$-boundedness principle for $\cS=\{\on{ev}_{x} : x \in X\}$.
\end{lemma}

\begin{proof}
    This follows from \Cref{l:stab} and \Cref{l:ubAAn}.
\end{proof}

For $X \subseteq \R^d$ closed and nonempty, we consider the space 
\[
    \cC^\om(X,E) := \{f : X \to E : \ell \o f \in \cC^\om(X) \text{ for all } \ell \in E^*\}  
\]
with the initial locally convex structure with respect to the family of maps 
\[
    \cC^\om(X,E) \stackrel{\ell_*}{\longrightarrow} \cC^\om(X), \quad \ell \in E^*.
\]
Then $\cC^\om(X,E)$ is a convenient vector space.

\begin{lemma} \label[l]{l:ubAnE}
    $\cC^\om(X,E)$ satisfies the uniform $\cS$-boundedness principle for $\cS=\{\on{ev}_{x} : x \in X\}$.
\end{lemma}

\begin{proof}
    This follows from \Cref{l:stab} and \Cref{l:ubAn}.
\end{proof}

We easily obtain a vector valued version of \Cref{t:analytic}.

\begin{theorem} \label[t]{t:analytic2}
    Let $X \subseteq \R^d$ be a H\"older set or a simple closed fat subanalytic set and $E$ a convenient vector space. 
    Then
    \begin{equation} \label{e:AAE}
        \AA(X,E) = \cC^\om(X,E)
    \end{equation}
    and the identity is a bornological isomorphism.
\end{theorem}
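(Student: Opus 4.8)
The plan is to follow the template of the proof of \Cref{t:smooth2} almost verbatim, since the two ingredients needed are already in place: the scalar bornological isomorphism \Cref{t:analytic} and the two uniform boundedness principles \Cref{l:ubAAnE} and \Cref{l:ubAnE}. In fact the argument here is slightly simpler than in the smooth case, because both $\AA(X,E)$ and $\cC^\om(X,E)$ are \emph{defined} by testing with the functionals $\ell \in E^*$, so no detour through curves (and hence no appeal to the characterization of smooth curves in a convenient vector space) is required.

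First I would establish the set-theoretic identity \eqref{e:AAE}. For a map $f : X \to E$ one has, directly from the definitions of the two spaces,
\begin{align*}
    f \in \AA(X,E)
    &\Leftrightarrow \forall \ell \in E^* : \ell \o f \in \AA(X)
    \\
    &\Leftrightarrow \forall \ell \in E^* : \ell \o f \in \cC^\om(X)
    \\
    &\Leftrightarrow f \in \cC^\om(X,E),
\end{align*}
where the middle equivalence is exactly the scalar identity \eqref{e:AAt} of \Cref{t:analytic}. Thus $\AA(X,E)$ and $\cC^\om(X,E)$ have the same underlying set and the identity map is well defined in both directions.

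It then remains to upgrade this set-theoretic identity to a bornological isomorphism, and for this I would play off the two uniform boundedness principles against each other. The point evaluations $\ev_x$, $x \in X$, form a point separating family of bounded linear maps on both $\AA(X,E)$ and $\cC^\om(X,E)$, and on the common underlying set they literally coincide. Applying the uniform $\cS$-principle for $\cC^\om(X,E)$ from \Cref{l:ubAnE} to the linear identity $\id : \AA(X,E) \to \cC^\om(X,E)$ reduces its boundedness to the boundedness of $\ev_x \o \id = \ev_x$ on $\AA(X,E)$ for every $x$, which holds since $\cS = \{\ev_x : x \in X\}$ is a family of bounded maps on $\AA(X,E)$ by \Cref{l:ubAAnE}. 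Symmetrically, \Cref{l:ubAAnE} shows that the inverse identity $\cC^\om(X,E) \to \AA(X,E)$ is bounded. Hence \eqref{e:AAE} is a bornological isomorphism.

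Every step is a formal consequence of results already available, so there is no genuine analytic obstacle; the only point requiring care is the bookkeeping in the uniform boundedness argument, namely that the testing family $\cS = \{\ev_x : x \in X\}$ is identical on both spaces, which is precisely what allows the two principles to be combined into a two-sided boundedness statement. This is exactly the mechanism already used in \Cref{t:smooth2}.
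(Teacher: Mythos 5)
Your proposal is correct and follows exactly the paper's (very terse) proof: the set-theoretic identity comes from composing with $\ell \in E^*$ and invoking \Cref{t:analytic}, and the bornological isomorphism follows by playing \Cref{l:ubAAnE} and \Cref{l:ubAnE} off against each other via the point evaluations. You have simply written out the details that the paper leaves implicit.
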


\begin{proof}
    This is an easy consequence of \Cref{t:analytic}, \Cref{l:ubAAnE}, and \Cref{l:ubAnE}.
\end{proof}

In the case that on the dual $E^*$ there exists a Baire topology for which the point evaluations are continuous
(for instance, if $E$ is a Banach space), 
the elements of \eqref{e:AAE} have holomorphic extensions.

\begin{theorem}
    Let $X \subseteq \R^d$ be a H\"older set or a simple closed fat subanalytic set and $E$ a convenient vector space. 
    Assume that on $E^*$ exists a Baire topology for which the point evaluations $\on{ev}_x$, $x \in E$, are continuous.
    Then for each $f \in \AA(X,E) = \cC^\om(X,E)$ there is an open neighborhood $U$ of $X$ in $\C^d$ and 
    a holomorphic map $F : U \to E_\C$ such that $F|_X = f$, where $E_\C$ is the complexification of $E$.
\end{theorem}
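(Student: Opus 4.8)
The plan is to produce, for each fixed $f \in \AA(X,E) = \cC^\om(X,E)$, a single holomorphic $E_\C$-valued extension by first extending each scalar composite $\ell \o f$ and then gluing these scalar extensions into a vector-valued one using the Baire hypothesis. First I would invoke \Cref{t:analytic2}: since $f \in \cC^\om(X,E)$, for every $\ell \in E^*$ the function $\ell \o f$ lies in $\cC^\om(X)$, so by definition it admits a holomorphic extension to some open neighborhood of $X$ in $\C^d$. The difficulty is that a priori each $\ell$ gives its own neighborhood and its own extension, with no control as $\ell$ varies; the whole point of the proof is to find one common neighborhood $U$ and to assemble a genuinely $E_\C$-valued holomorphic map out of the family of scalar extensions.

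The key mechanism is to exploit the real analyticity of $f$ on the interior together with a uniform radius-of-convergence estimate. On $X^\o$ the map $f$ is real analytic with values in $E$, and by \Cref{r:derext} all derivatives $(f|_{X^\o})^{(n)}$ extend continuously to $X$. I would fix a point $a \in X$ and consider the formal Taylor series of $f$ at $a$, whose coefficients are the (extended) symmetric multilinear maps $\tfrac{1}{n!}f^{(n)}(a) \in L^n(\R^d,E)$. To control convergence I would apply the Baire-category argument from convenient real analyticity (as in \cite[Theorem 7.14 and Section 11]{KM97}): the scalar real analyticity of all $\ell \o f$ furnishes, for each $\ell$, bounds on $\ell(f^{(n)}(a)/n!)$ of the form $r^{-n}$ for some $r>0$ depending on $\ell$; the Baire topology on $E^*$ for which the $\on{ev}_x$ are continuous lets one promote these pointwise-in-$\ell$ bounds to a single uniform bound valid on a neighborhood of $a$, yielding a common radius of convergence independent of $\ell$. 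This is the standard device by which scalar real analyticity upgrades to vector-valued (bounded) real analyticity, and it is exactly where the Baire hypothesis is consumed.

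With a uniform radius in hand, the power series $\sum_n \tfrac{1}{n!} f^{(n)}(a)(z-a)^n$ converges in $E_\C$ on a fixed polydisc about $a$ and defines a holomorphic $E_\C$-valued germ there; weak holomorphy (holomorphy of $\ell \o F$ for all $\ell$) together with local boundedness gives genuine holomorphy into the convenient space $E_\C$ by \cite[Theorem 7.19 or 11.8]{KM97}. Covering $X$ by such polydiscs and checking that the local extensions agree on overlaps — which holds because their scalar composites $\ell \o F$ agree with the unique scalar extension of $\ell \o f$, and $E^*$ separates points — I would patch them into a holomorphic $F : U \to E_\C$ on an open neighborhood $U$ of $X$ in $\C^d$ with $F|_X = f$.

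The main obstacle is the uniformity step: turning the $\ell$-dependent radii of convergence into a single common one. This is precisely where a mere locally convex structure on $E^*$ is insufficient and the Baire assumption (e.g.\ $E$ a Banach space, so $E^* $ carries a Baire norm topology with continuous point evaluations) is needed, via a Baire-category / uniform boundedness argument applied to the sets $\{\ell : |\ell(f^{(n)}(a))| \le C\, n!\, r^{-n}\ \forall n\}$. Once this uniform estimate is secured, the remaining steps — weak-to-strong holomorphy and patching — are routine consequences of convenient calculus and the point-separating property of $E^*$.
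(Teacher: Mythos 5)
Your proposal follows essentially the same route as the paper's own proof: both extend each scalar composite $\ell\circ f$ via \Cref{t:analytic2}, form the Taylor series of $f$ at boundary points using the continuously extended derivatives from \Cref{r:derext}, invoke the Baire hypothesis on $E^*$ (the paper cites \cite[Theorem 25.1]{KM97}, whose proof is exactly the Baire-category argument on the sets of functionals you describe) to obtain a common radius of convergence and hence a local holomorphic $E_\C$-valued extension, and then glue. The only difference is cosmetic: the paper delegates the coherence of the gluing to \cite[Proposition 2.2]{Rainer:2024aa}, whereas you sketch it directly via point-separation of $E^*$ and uniqueness of scalar germs, which is adequate provided one shrinks the neighborhoods so that overlaps meet $X^\o$ in each component.
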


\begin{proof}
    Let $f \in \AA(X,E)$. Then $f \in \AS(X,E)$ and hence the derivatives $f^{(n)} : X \to L^n(\R^d,E)$ exist; see \Cref{r:derext}. 
    Fix $x \in \p X$ and consider the sequence $(\frac{1}{n!}f^{(n)}(x))_{n\ge 0}$.
    For each $\ell  \in E^*$, the composite $\ell \o f$ extends to a holomorphic function $F_\ell$ 
    defined on an open neighborhood of $X$, since $\AA(X,E) = \cC^\om(X,E)$, by \Cref{t:analytic2}. 
    For each $v \in \R^d$ and $y \in X^\o$ close enough to $x$, we have 
    $F_\ell^{(n)}(y)(v^n) = \ell(f^{(n)}(y)(v^n))$ and letting $y \to x$ we find $F_\ell^{(n)}(x)(v^n) = \ell(f^{(n)}(x)(v^n))$, by continuity.
    We conclude that the power series
    \[
        \sum_{n \ge 0} \ell\big(\tfrac{1}{n!}f^{(n)}(x)(v^n)\big) \, t^n = \sum_{n \ge 0} \tfrac{1}{n!} F_\ell^{(n)}(x)(v^n) t^n
    \]
    has positive radius of convergence.
    It follows from \cite[Theorem 25.1]{KM97} that the power series $\sum_{n \ge 0} \tfrac{1}{n!}f^{(n)}(x)(v^n)$ converges for $v$ in some 
    neighborhood of $0 \in \C^d$ and hence represents a holomorphic map $F_x$ in a neighborhood $U_x$ of $x$.
    For each $\ell \in E^*$ and each $n$, 
    \[
        (\ell \o F_x)^{(n)}(x) = \ell (f^{(n)}(x)) = F_\ell^{(n)}(x)
    \]
    so that the holomorphic functions $\ell \o F_x$ and $F_\ell$ coincide on a neighborhood $U_x$ of $x$. 
    Thus 
    if $y \in X^\o \cap U_x$ then $\ell (F_x(y)) = F_\ell(y) = \ell (f(y))$.
    Since this holds for all $\ell \in E^*$, we conclude that the holomorphic extension $F_x : U_x \to E_\C$ satisfies $F_x|_{X^\o \cap U_x} = f|_{X^\o \cap U_x}$.

    Now it remains to check that $f$ and the $F_x$ glue coherently to a holomorphic extension of $f$
    which follows from the arguments in the proof of \cite[Proposition 2.2]{Rainer:2024aa}.
\end{proof}

%-----------------------------------------------------------------------------------------------------------------------
\subsection{Exponential laws}
%-----------------------------------------------------------------------------------------------------------------------

The following lemma is a special case of \cite[25.11]{KM97}.

\begin{lemma} \label[l]{l:mixing}
    Let $X \subseteq \R^d$ be nonempty.
    If $f : \R \times X \to \R$ is locally the restriction of a holomorphic map and $c \in \cC^\infty(\R,X)$,
    then $c^* \o f^\vee : \R \to \cC^\infty(\R,\R)$ is real analytic.
\end{lemma}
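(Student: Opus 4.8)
The plan is to show that the curve $g := c^* \o f^\vee : \R \to \cC^\infty(\R,\R)$, which sends $s$ to the smooth function $t \mapsto f(s,c(t))$, is real analytic by producing, near every $s_0 \in \R$, a holomorphic extension into the complexification; real analyticity then follows from the characterization of real analytic curves into convenient vector spaces in \cite{KM97}. Since $\cC^\infty(\R,\R)$ carries the initial locally convex structure with respect to the restrictions $\cC^\infty(\R,\R) \to \cC^\infty(I,\R)$ to compact intervals $I=[-r,r]$, and every bounded functional on $\cC^\infty(\R,\R)$ is a compactly supported distribution and hence factors through one such restriction, it suffices to prove that $\text{res}_I \o\, g : \R \to \cC^\infty(I,\R)$ is real analytic for each fixed $I$. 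So I fix $I$ and work with $g_I := \text{res}_I \o\, g$.

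Next I would build the local extension. Fix $s_0 \in \R$. The set $\{s_0\}\times c(I)$ is compact, and by hypothesis each of its points $(s_0,c(t_0))$ has a neighbourhood $W_{t_0} \subseteq \C\times\C^d$ on which $f$ agrees with a holomorphic function $G_{t_0}$. Using continuity of $c$ and compactness of $I$, I choose a single radius $\rho>0$ and finitely many subintervals covering $I$ so that on the $i$-th of them the function $h_i(z,t) := G_{t_i}(z,c(t))$ is defined for $|z-s_0|<\rho$; each $h_i$ is holomorphic in $z$ (as $c(t)$ does not depend on $z$) and smooth in $t$. For real $z=s$ we have $h_i(s,t)=f(s,c(t))$, so on an overlap the functions $h_i(\cdot,t)$ and $h_j(\cdot,t)$ are holomorphic on $\{|z-s_0|<\rho\}$ and agree along the real axis; by the identity theorem they agree on the whole disk. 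Hence the $h_i$ glue to a single $h(z,t)$ on $\{|z-s_0|<\rho\}\times I$ that is holomorphic in $z$, smooth in $t$, and satisfies $h(s,\cdot)=g_I(s)$ for real $s$.

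The crux, and the step I expect to be the main obstacle, is to upgrade $h$ to a genuinely holomorphic curve $\tilde g : z \mapsto h(z,\cdot)$ with values in the Fr\'echet space $\cC^\infty(I,\C)$, because this needs uniform control of all $t$-derivatives. By the Fa\`a di Bruno formula, on each piece $\partial_t^j h_i(z,t)$ is a universal polynomial in the derivatives of $c$ whose coefficients are partial derivatives of the holomorphic map $G_{t_i}$ evaluated at $(z,c(t))$; these are jointly continuous, so $(z,t)\mapsto \partial_t^j h(z,t)$ is jointly continuous and, in particular, bounded on $\{|\zeta-s_0|=\rho'\}\times I$ for each $\rho'<\rho$. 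Representing $h(z,t)$ by Cauchy's integral $\frac{1}{2\pi i}\oint_{|\zeta-s_0|=\rho'}\frac{h(\zeta,t)}{\zeta-z}\,d\zeta$, differentiating under the integral in $t$, and estimating the difference quotients in $z$, I obtain that $w^{-1}\big(h(z+w,\cdot)-h(z,\cdot)\big)$ converges as $w\to 0$ in each seminorm $\sup_{t\in I}|\partial_t^j(\cdot)|$. Thus $\tilde g$ is complex differentiable into $\cC^\infty(I,\C)$, i.e.\ holomorphic.

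Finally, $\tilde g$ restricts on the real axis to $g_I$, so near each $s_0$ the curve $g_I$ extends to a holomorphic curve into the complexification of $\cC^\infty(I,\R)$; by the cited characterization $g_I$ is real analytic. Since $I$ was arbitrary and, by the factorization of bounded functionals noted above, real analyticity of $g$ is detected by the $g_I$ (compose the holomorphic extensions with $\ell=\mu\o\,\text{res}_I$), the curve $g=c^*\o f^\vee$ is real analytic, which is the assertion.
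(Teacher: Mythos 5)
Your argument is correct. Note, however, that the paper does not actually prove this lemma: it disposes of it in one line by declaring it a special case of \cite[25.11]{KM97}, so what you have written is a self-contained reconstruction of the argument behind that citation rather than an alternative to anything in the paper. Your route is the standard convenient-calculus one and all the steps check out: the reduction to curves into $\cC^\infty(I,\R)$ is legitimate because every bounded (= continuous) linear functional on the Fr\'echet space $\cC^\infty(\R,\R)$ is a compactly supported distribution and factors through some restriction map, and real analyticity of a curve is by definition tested by such functionals; the gluing of the local extensions $h_i$ via the identity theorem on the disc $\{|z-s_0|<\rho\}$ is sound, since the $h_i(\cdot,t)$ all agree with $s\mapsto f(s,c(t))$ on the real segment; and the Cauchy-integral estimate, combined with joint continuity of $(z,t)\mapsto\partial_t^j h(z,t)$ (chain rule applied to the holomorphic local extensions composed with $c$), does yield convergence of the difference quotients in every seminorm of $\cC^\infty(I,\C)$, hence a holomorphic curve whose restriction to $\R$ is $g_I$, which gives real analyticity of $g_I$ upon composing with functionals. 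The only cosmetic remark is that you could shorten the third step: since $\cC^\infty(I,\C)$ is Fr\'echet, weak holomorphy (i.e.\ holomorphy of $\mu\o\tilde g$ for all $\mu$ in the dual) already implies holomorphy, so the explicit difference-quotient estimate is not strictly needed; but carrying it out as you do is perfectly fine and keeps the proof elementary.
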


\begin{theorem} \label[t]{t:expAn}
    Let $X_i \subseteq \R^{d_i}$, $i=1,2$, 
    be simple closed fat subanalytic sets.
    Let $E$ be a convenient vector space.
    Then the following exponential laws hold as bornological isomorphisms:
    \begin{enumerate}
        \item  $\AA(X_1,\AA(X_2,E)) \cong \AA(X_1 \times X_2,E)$;
        \item $\cC^\om(X_1,\cC^\om(X_2,E)) \cong \cC^\om(X_1 \times X_2,E)$.
    \end{enumerate}
\end{theorem}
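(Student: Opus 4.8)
The plan is to prove part~(1) directly and then deduce part~(2) from it, exactly as in the smooth case (\Cref{t:exp}). Throughout I use that, for a convenient vector space $G$, a map $f : X \to G$ lies in $\AA(X,G)$ if and only if $f_*\cC^\infty(\R,X) \subseteq \cC^\infty(\R,G)$ and $f_*\cC^\om(\R,X) \subseteq \cC^\om(\R,G)$; this curve description follows from the defining initial structures together with the convenient-calculus fact that a curve into a $c^\infty$-closed subspace of a product is smooth (resp.\ real analytic) precisely when all its components are, see \cite{KM97}. Writing $g := f^\wedge$ and composing with $\ell \in E^*$ and with the structure maps $c_2^*$ of $\AA(X_2)$ and $c_1^*$ of $\AA(X_1)$ then reduces every assertion to a scalar statement about $h := \ell \o g \in \AA(X_1 \times X_2)$, where $X_1 \times X_2$ is simple closed fat subanalytic by \Cref{l:prod}.

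For the set-theoretic bijection $f \leftrightarrow g = f^\wedge$ I would verify the equivalence of the one-parameter conditions defining $\AA(X_1\times X_2,E)$ with the two-parameter conditions defining $\AA(X_1,\AA(X_2,E))$. After the reduction above, the latter split into four cases according to whether the curve $c_1$ in $X_1$ and the curve $c_2$ in $X_2$ are smooth or real analytic, and one must show that
\[
    t \longmapsto \big[\,s \mapsto h(c_1(t),c_2(s))\,\big]
\]
is a smooth (resp.\ real analytic) curve into $\cC^\infty(\R,\R)$ or $\cC^\om(\R,\R)$, the inner target being $\cC^\infty$ or $\cC^\om$ according to $c_2$ and the type of the outer curve according to $c_1$. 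In the smooth--smooth case I realize $h$ as the restriction of a genuine $\cC^\infty$ function on $\R^{d_1+d_2}$ via \Cref{t:smooth1} and invoke the smooth exponential law (\Cref{t:exp}, or the cartesian closedness of $\cC^\infty$ in \cite{KM97}); in the analytic--analytic case I use \Cref{t:analytic} to write $h = \tilde H|_{X_1\times X_2}$ for a holomorphic $\tilde H$ on a neighbourhood of $X_1 \times X_2$ in $\C^{d_1+d_2}$ and feed the complexified curves into the holomorphic/real-analytic exponential law of \cite{KM97}. The two mixed cases are handled by the same holomorphic extension together with \Cref{l:mixing}: for analytic $c_1$ and smooth $c_2$, applying \Cref{l:mixing} to $(t,x_2) \mapsto \tilde H(\tilde c_1(t),x_2)$ and the smooth curve $c_2$ yields real analyticity into $\cC^\infty(\R,\R)$, and the transposed case (smooth $c_1$, analytic $c_2$, target $\cC^\om(\R,\R)$) follows from the description of $\cC^\infty(\R,\cC^\om(\R,\R))$ as the functions on $\R^2$ that are smooth in the first and real analytic in the second variable, again using that $h$ extends holomorphically in the $X_2$-directions. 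The reverse implication ($g^\vee \in \AA(X_1,\AA(X_2,E)) \Rightarrow g \in \AA(X_1\times X_2,E)$) reduces to the same four scalar cases, the smooth part being covered by the Fr\"olicher exponential law already used in \Cref{t:exp}(1).

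With the bijection in hand the bornological isomorphism is purely formal. Both $\AA(X_1\times X_2,E)$ and the iterated space $\AA(X_1,\AA(X_2,E))$ satisfy the uniform boundedness principle for the point evaluations: the former by \Cref{l:ubAAnE} (with \Cref{l:prod}), the latter by applying \Cref{l:ubAAnE} to the convenient codomain $\AA(X_2,E)$ together with \Cref{l:stab}. Under the exponential correspondence the evaluation $\ev_{(x_1,x_2)}$ on the product side matches $\ev_{x_2}\o\ev_{x_1}$ on the iterated side, so composing either direction of the linear bijection with the point evaluations of the target yields a point evaluation of the source, which is bounded; the uniform boundedness principle then makes both directions bounded, i.e.\ the identification is a bornological isomorphism. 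Finally, part~(2) follows from~(1) verbatim as in \Cref{t:exp}(2): by \Cref{t:analytic2} we have bornological isomorphisms $\AA(X_i,E) = \cC^\om(X_i,E)$ and $\AA(X_1\times X_2,E)=\cC^\om(X_1\times X_2,E)$, whence $\AA(X_1,\cC^\om(X_2,E)) = \AA(X_1,\AA(X_2,E))$, and chaining these with~(1) gives $\cC^\om(X_1,\cC^\om(X_2,E)) \cong \cC^\om(X_1\times X_2,E)$.

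I expect the genuine difficulty to lie entirely in the two \emph{mixed} cases of the set-theoretic step, where one must control a smooth curve into the inductive-limit space $\cC^\om(\R,\R)$ and a real analytic curve into the Fr\'echet space $\cC^\infty(\R,\R)$: this is precisely the interaction between the smooth and the real analytic directions that \Cref{l:mixing} is designed to govern, and the care needed to set up the complexified curves $\tilde c_1,\tilde c_2$ and to keep $(\,\cdot\,,\tilde c_2(\,\cdot\,))$ inside the neighbourhood on which $\tilde H$ is defined is the main technical burden. By comparison, the reduction to scalars, the smooth and holomorphic exponential laws, and the concluding uniform-boundedness argument are routine.
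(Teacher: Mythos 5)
Your proposal is correct and follows essentially the same route as the paper: reduction to scalar curve conditions, the diagonal (smooth--smooth and analytic--analytic) cases via the open-domain exponential laws, the two mixed cases via the holomorphic extension provided by \Cref{t:analytic} combined with \Cref{l:mixing} and the flip result \cite[11.16]{KM97}, the bornological statement via the uniform boundedness principle for point evaluations, and part~(2) deduced from~(1) using \Cref{l:prod} and \Cref{t:analytic2}. The only (harmless) variations are that you extend $h$ holomorphically on all of $X_1\times X_2$ and then precompose with $\tilde c_1$, where the paper first forms $\ell\o f\o(c_1\times\id)$ on $\R\times X_2$ and extends that, and that only the $f^\vee$ direction actually requires the mixed cases, since a curve in the product has both components of the same class.
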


\begin{proof}
    (1) 
    Let $f \in \AA(X_1, \AA(X_2,E))$ and consider the associated map $f^\wedge : X_1 \times X_2 \to E$.
    Let $c = (c_1,c_2) : \R \to X_1 \times X_2$ be $\cC^a$, for $a=\infty$ or $a =\om$, and $\ell \in E^*$.
    It suffices to show that $\ell \o f^\wedge \o (c_1 \times c_2) : \R \times \R \to \R$ is $\cC^a$.
    Now $(\ell \o f^\wedge \o (c_1 \times c_2) )^\vee = \ell_* \o c_2^* \o f \o c_1 : \R \to \cC^a(\R,\R)$ 
    is of class $\cC^a$ by assumption.
    By the $\cC^a$ exponential law (on open domains), see \cite[3.12 and 11.18]{KM97}, we may conclude that $f^\wedge \in \AA(X_1 \times X_2,E)$.

    Conversely, let $f \in \AA(X_1 \times X_2,E)$.
    Then $f^\vee : X_1 \to \AA(X_2,E)$ is well-defined; 
    we want to show that this map is of class $\AA$.
    Let $c_1 \in \cC^a(\R, X_1)$, for $a=\infty$ or $a =\om$,
    $c_2 \in \cC^b(\R, X_2)$, for $b=\infty$ or $b =\om$,
    and $\ell \in E^*$.
    We have to prove that $\ell_* \o c_2^* \o f^\vee \o c_1 : \R \to \cC^b(\R,\R)$ is $\cC^a$.
    We distinguish three cases:

    (i) If $a=b$, the desired property follows from the $\cC^a$ exponential law (on open domains).

    (ii) Now assume that $c_1 \in \cC^\om$ and $c_2 \in \cC^\infty$.
    Then $\ell \o f \o (c_1 \times \id) : \R \times X_2 \to \R$ is of class $\cC^\om$.
    By \Cref{t:analytic}, $\ell \o f \o (c_1 \times \id)$ has a holomorphic extension.
    By \Cref{l:mixing},
    $c_2^* \o (\ell \o f \o (c_1 \times \id))^\vee = \ell \o c_2^* \o f^\vee \o c_1 : \R \to \cC^\infty(\R,\R)$
    is of class $\cC^\om$.

    (iii) Finally, let $c_1 \in \cC^\infty$ and $c_2 \in \cC^\om$.
    We may apply case (ii) to the map $\tilde f(x,y):= f(y,x)$ to conclude that
    $\ell \o c_1^* \o (\tilde f)^\vee \o c_2 : \R \to \cC^\infty(\R,\R)$
    is of class $\cC^\om$. By \cite[11.16]{KM97}, this means that 
    $\ell \o c_2^* \o f^\vee \o c_1 : \R \to \cC^\om(\R,\R)$
    is of class $\cC^\infty$.

    Thus we have proved that $f \in \AA(X_1,\AA(X_2,E))$ if and only if $f^\wedge \in \AA(X_1 \times X_2,E)$.
    That it is a bornological isomorphism is seen as follows. By \Cref{l:ubAAnE} 
    applied to $\AA(X_1 \times X_2,E)$,
    the linear map $(\cdot)^\wedge : \AA(X_1,\AA(X_2,E)) \to \AA(X_1 \times X_2,E)$ is bounded provided that
    \[
        f \mapsto \ell_*\o \ev_{(x_1,x_2)} \o (\cdot)^\wedge(f) = \ell(f^\wedge (x_1,x_2))
    \]
    is bounded for all $(x_1,x_2) \in X_1 \times X_2$ and $\ell \in E^*$. 
    This follows from the definition of the structure on $\AA(X_1,\AA(X_2,E))$, 
    viewing $x_1$ and $x_2$ as constant ($\cC^\infty$ or $\cC^\om$) curves and noting that $\ell_* \o x_2^*$ 
    is a continuous linear functional on $\AA(X_2,E)$.

    To see that the linear map $(\cdot)^\vee : \AA(X_1 \times X_2,E) \to \AA(X_1,\AA(X_2,E))$ is bounded
    we have to check, by \Cref{l:ubAAnE}, that 
    \[
        g \mapsto  \ev_{x_1} \o (\cdot)^\vee(g) = g^\vee(x_1) \in \AA(X_2, E)
    \]
    is bounded for all $x_1 \in X_1$,
    or equivalently, again by \Cref{l:ubAAnE}, that 
    \[
        g \mapsto \ell_*\o \on{ev}_{x_2} (g^\vee(x_1)) = \ell(g^\vee(x_1)(x_2))= \ell(g(x_1,x_2))
    \]
    is bounded for all $x_1 \in X_1$, $x_2 \in X_2$, and $\ell \in E^*$. 
    As before, it follows from the structure on $\AA(X_1\times X_2,E)$ and viewing $x_1 \times x_2$ as a constant 
    ($\cC^\infty$ or $\cC^\om$) curve in $X_1 \times X_2$.

    (2) This follows from (1), \Cref{l:prod}, and \Cref{t:analytic2}, by arguments similar to those 
    in the proof of \Cref{t:exp}(2). 
\end{proof}

\begin{remark} \label[r]{r:can}
    Note that in (1) we could replace one of the $X_i$ with any convenient vector space; 
    the proof of the fact that $f \mapsto f^\wedge$ is a bijection remains the same (using \cite[11.17 or 25.11]{KM97} instead of \Cref{l:mixing}). 
    Then the boundedness (even if both $X_i$ are subanalytic) can be deduced 
    as in \cite[Corollary 3.13]{KM97}.
\end{remark}

\begin{remark}
    \Cref{t:expAn} remains valid (with the same proof) if $X_1$, $X_2$, and also $X_1 \times X_2$ are H\"older sets, 
    for instance, if $X_1 = \R^n$. 
    Notice that \Cref{t:analytic} was used in the proof of (1) (and of (2)). Indeed, (1) is not always true as seen in the following example.
\end{remark}

\begin{example}[{\cite[25.12]{KM97}}] \label[e]{ex:explawfails}
    Let $X \subseteq \R^2$ be the graph of $h : \R \to \R$, $h(t):= \exp(-t^{-2})$ if $t\ne0$ and $h(0):= 0$
    and $f : \R \times X \to \R$ be defined by $f(x,y,z) := \frac{z}{x^2+y^2}$ for $x^2+y^2\ne 0$ and $f(0,0,z):=0$.
    Then $f \in \AA(\R \times X)$  but $f^\vee : \R \to \AA(X,\R)$ is not $\cC^\om$.
    Indeed, for $c(t): = (t,h(t))$ the map $c^* \o f^\vee : \R \to \cC^\infty(\R,\R)$, $x \mapsto (y \mapsto f(x,c(y)) = \frac{h(y)}{x^2+y^2})$
    is not $\cC^\om$. For details see \cite[25.12]{KM97}.
\end{example}

%-----------------------------------------------------------------------------------------------------------------------
\subsection{Real analyticity on \texorpdfstring{$2$}{2}-dimensional plots} \label{ssec:2plot}
%-----------------------------------------------------------------------------------------------------------------------

Instead of asking that a map respects $\cC^\infty$ and $\cC^\om$ curves we will now assume that it respects $2$-dimensional $\cC^\om$ plots.

It is known that
a function $f$ on an open nonempty set $U \subseteq \R^d$ 
is real analytic if and only if the restriction of $f$ to each affine $2$-plane that meets $U$ is real analytic, 
by \cite{Bochnak:1971aa,Bochnak:2018aa}. See also
\cite{Bochnak:2020tz} for a global version of this result.

For any nonempty subset $X \subseteq \R^d$ and any convenient vector space $E$ let us consider
\[
    \PA(X,E) := \{f : X \to E : \ell \o f \o p \in \cC^\om(\R^2,\R) \text{ for all } p \in \cC^\om(\R^2,X),\, \ell \in E^*\}
\]
endowed with the initial locally convex structure with respect to the family
\[
    \PA(X,E) \stackrel{\ell_* \o p^*}{\longrightarrow} \cC^\om(\R^2,\R), \quad  p \in \cC^\om(\R^2,X),\, \ell \in E^*,
\]
where $\cC^\om(\R^2,\R)$ carries the usual structure defined in analogy to the structure on $\cC^\om(\R,\R)$.
Then $\PA(X,E)$ is a convenient vector space.

\begin{lemma} \label[l]{l:ubPAE}
    $\PA(X)$ satisfies the uniform $\cS$-boundeness principle for $\cS=\{\on{ev}_{x} : x \in X\}$.
\end{lemma}

\begin{proof}
    This follows from \Cref{l:stab} since $\cC^\om(\R^2,\R)$ satisfies the uniform 
    $\cS$-boundedness principle; 
    see \cite[Theorem 11.12]{KM97}.
\end{proof}

\begin{theorem} \label[t]{t:expPA}
    Let $X_i \subseteq \R^{d_i}$, $i=1,2$, be arbitrary nonempty sets and $E$ a convenient vector space. 
    Then the exponential law holds as bornological isomorphism:
    \[
        \PA(X_1,\PA(X_2,E)) \cong \PA(X_1 \times X_2,E).
    \]
\end{theorem}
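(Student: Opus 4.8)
The plan is to identify the two sides as sets via the natural bijection $f \leftrightarrow f^\wedge$ and then to promote this to a bornological isomorphism by the uniform boundedness principle, exactly as in the proof of \Cref{t:expAn}. Throughout I would use the real analytic exponential law on open finite-dimensional domains, $\cC^\om(\R^2,\cC^\om(\R^2,\R)) \cong \cC^\om(\R^2\times\R^2,\R)$ (see \cite[3.12 and 11.18]{KM97}), and the fact that a map into a convenient vector space is real analytic once all its composites with bounded linear functionals are (see \cite[\S11]{KM97}). In contrast to the curve case of \Cref{t:expAn}, the two implications call for genuinely different arguments, since a $\cC^\om$ plot into $X_1\times X_2$ need not split as a pair of plots.

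For the implication $f \in \PA(X_1,\PA(X_2,E)) \Rightarrow f^\wedge \in \PA(X_1\times X_2,E)$, I would fix a plot $p=(p_1,p_2)\in\cC^\om(\R^2,X_1\times X_2)$ and $\ell\in E^*$ and decouple the two variables. By definition of $\PA(X_1,\PA(X_2,E))$, the composite $\lambda\o f\o p_1$ is real analytic for every $\lambda\in\PA(X_2,E)^*$, whence $f\o p_1:\R^2\to\PA(X_2,E)$ is real analytic. Post-composing with the bounded linear structure map $\ell_*\o p_2^*:\PA(X_2,E)\to\cC^\om(\R^2,\R)$ gives $\Psi:=\ell_*\o p_2^*\o f\o p_1\in\cC^\om(\R^2,\cC^\om(\R^2,\R))$, and the exponential law turns this into $\Psi^\wedge\in\cC^\om(\R^2\times\R^2,\R)$, where $\Psi^\wedge(u,v)=\ell\big(f(p_1(u))(p_2(v))\big)$. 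Since $\ell\o f^\wedge\o p=\Psi^\wedge\o\De$ with $\De(u)=(u,u)$ real analytic, this is the required conclusion.

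The converse, $f\in\PA(X_1\times X_2,E)\Rightarrow f^\vee\in\PA(X_1,\PA(X_2,E))$, is the main point. One first checks, using plots that are constant in the first factor, that $f^\vee(x_1)=f(x_1,\cdot)\in\PA(X_2,E)$ for all $x_1$. To see that $f^\vee$ itself is of class $\PA$, I would fix $p_1\in\cC^\om(\R^2,X_1)$ and show $f^\vee\o p_1:\R^2\to\PA(X_2,E)$ is real analytic; as $\PA(X_2,E)$ carries the initial structure with respect to the maps $\ell_*\o p_2^*$, this reduces to proving that each $\Xi:=\ell_*\o p_2^*\o f^\vee\o p_1:\R^2\to\cC^\om(\R^2,\R)$ is real analytic, equivalently (by the exponential law) that $\Xi^\wedge:\R^2\times\R^2\to\R$, $(u,v)\mapsto\ell\big(f(p_1(u),p_2(v))\big)$, is real analytic. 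The obstacle is that the hypothesis only controls $f$ along two-dimensional plots, while $\Xi^\wedge$ is a function on the four-dimensional $\R^2\times\R^2$ with its variables already separated, so neither the exponential law nor scalar detection applies directly. I would bridge this gap with the characterization of real analyticity by restriction to affine $2$-planes \cite{Bochnak:1971aa,Bochnak:2018aa}: for any affine $\al=(\al_1,\al_2):\R^2\to\R^2\times\R^2$ one has $\Xi^\wedge\o\al=\ell\o f\o q$ with $q:=(p_1\o\al_1,\,p_2\o\al_2)\in\cC^\om(\R^2,X_1\times X_2)$ again a genuine two-dimensional plot, so $\Xi^\wedge\o\al$ is real analytic because $f\in\PA(X_1\times X_2,E)$. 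As this holds on every affine $2$-plane, the cited theorem yields $\Xi^\wedge\in\cC^\om(\R^2\times\R^2,\R)$, as needed.

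It remains to upgrade the bijection to a bornological isomorphism. Both $\PA(X_1,\PA(X_2,E))$ and $\PA(X_1\times X_2,E)$ satisfy the uniform boundedness principle with respect to point evaluations, which follows from \Cref{l:ubPAE} together with \Cref{l:stab}, just as in \Cref{l:ubAAnE}. Hence $(\cdot)^\wedge$ is bounded as soon as $f\mapsto\ell(f^\wedge(x_1,x_2))=\ell(f(x_1)(x_2))$ is bounded for all $x_1,x_2,\ell$, which is read off the structure on $\PA(X_1,\PA(X_2,E))$ by viewing $x_1$ and $x_2$ as constant plots; and, applying the principle twice, $(\cdot)^\vee$ is bounded as soon as $g\mapsto\ell(g(x_1,x_2))$ is bounded for all $x_1,x_2,\ell$, which is read off the structure on $\PA(X_1\times X_2,E)$. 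This is the same bookkeeping as in the proof of \Cref{t:expAn}(1). The crux of the whole argument is the converse direction, where the Bochnak--Siciak $2$-plane theorem is exactly what converts real analyticity on all $2$-planes into joint real analyticity on $\R^4$.
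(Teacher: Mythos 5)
Your argument is correct and, at the level of strategy, it is the same as the paper's: reduce both membership conditions to maps between open subsets of $\R^2$ via the initial convenient structures, invoke the real analytic exponential law on open domains, and upgrade the bijection to a bornological isomorphism with the uniform boundedness principle for point evaluations (your bookkeeping here coincides with the paper's, which cites \Cref{l:ubPAE} and the proof of \Cref{t:expAn}(1)). The difference is one of completeness rather than of route. The paper's entire argument for the bijection is the assertion that it ``can be reduced, by the definition of the convenient structure, to the case $X_1=X_2=\R^2$ and $E=\R$''; it never addresses the mismatch you correctly isolate, namely that a plot into $X_1\times X_2$ is a pair of plots with a \emph{common} $\R^2$-parameter, whereas the exponential law governs the separated-variables function on $\R^2\times\R^2$. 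Your two bridging devices --- restricting $\Psi^\wedge$ to the diagonal in the forward direction, and applying the Bochnak--Siciak $2$-plane criterion (\Cref{thm:BSfindim}) to $\Xi^\wedge$ on $\R^2\times\R^2$ in the converse direction, where each affine section $\Xi^\wedge\o\al=\ell\o f\o(p_1\o\al_1,p_2\o\al_2)$ is again a composite with a genuine $2$-dimensional plot --- are exactly what is needed to make that reduction legitimate. The second device is essentially unavoidable: membership in $\PA$ does not presuppose smoothness, so real analyticity of $\Xi^\wedge$ cannot be detected by smooth and real analytic curves as in \cite[10.4]{KM97}, and only a separate-analyticity result of Bochnak--Siciak type converts planewise analyticity into joint analyticity on $\R^4$. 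In short, your write-up supplies the substance behind the paper's one-line reduction and makes explicit its implicit reliance on \Cref{thm:BSfindim}; this is a genuine improvement in rigor, not a deviation in method.
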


\begin{proof}
    The proof of the fact that $f \in \PA(X_1,\PA(X_2,E))$ if and only if $f^\wedge \in \PA(X_1 \times X_2,E)$ 
    can be reduced,
    by the definition of the convenient structure, to the case $X_1=X_2 = \R^2$ and $E = \R$,
    which then is a simple instance of the real analytic exponential law on open domains (see \cite[11.18]{KM97}). 

    That this gives a bornological isomorphism can be seen using the uniform boundedness principle, \Cref{l:ubPAE},
    similarly as in the proof of \Cref{t:expAn}(1).
\end{proof}

Let us now compare the spaces $\AA(X,E)$ and $\PA(X,E)$.

\begin{lemma} \label[l]{l:AAinPA}
    Let $X \subseteq \R^d$ be nonempty and $E$ a convenient vector space.
    We always have the bounded inclusion 
    \begin{equation} \label{e:PA1}
        \AA(X,E) \subseteq \PA(X,E).
    \end{equation}
    There exist $X$ and $E$ such that this inclusion is strict.
\end{lemma}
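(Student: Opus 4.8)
The plan is to establish \eqref{e:PA1} by translating the real analyticity of the scalar composites into the curve conditions that define $\AA(X)$, to deduce boundedness from the uniform boundedness principle, and finally to exhibit strictness using a set so thin that it carries no nonconstant real analytic plots.

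First I would prove the set-theoretic inclusion. Fix $f \in \AA(X,E)$, $\ell \in E^*$ and $p \in \cC^\om(\R^2,X)$, and put $g := \ell \o f$, so that $g \in \AA(X)$. To see that $g \o p \in \cC^\om(\R^2,\R)$ I would use the description of real analyticity by curves: a function $h : \R^2 \to \R$ is real analytic if and only if $h \o c$ is smooth for every $c \in \cC^\infty(\R,\R^2)$ and real analytic for every $c \in \cC^\om(\R,\R^2)$ (this combines Boman's theorem with the curve description of real analyticity, see \cite[11.17]{KM97}). For a smooth curve $c$ the composite $p \o c$ lies in $\cC^\infty(\R,X)$, since $p$ is in particular smooth, whence $g \o (p \o c) \in \cC^\infty(\R,\R)$ because $g \in \AS(X)$; for a real analytic curve $c$ we have $p \o c \in \cC^\om(\R,X)$, whence $g \o (p \o c) \in \cC^\om(\R,\R)$ because $g \in \AA(X)$. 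Therefore $g \o p$ is real analytic, and $f \in \PA(X,E)$.

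For boundedness I would argue directly from the initial structure on $\PA(X,E)$: it suffices that each map $\ell_* \o p^* : \AA(X,E) \to \cC^\om(\R^2,\R)$, $f \mapsto \ell \o f \o p$, is bounded. Since $\cC^\om(\R^2,\R)$ satisfies the uniform $\cS$-boundedness principle for point evaluations (see \cite[Theorem 11.12]{KM97}), this reduces to the boundedness of $f \mapsto \ev_{(s,t)}(\ell \o f \o p) = \ell\big(f(p(s,t))\big)$ for each $(s,t) \in \R^2$. Writing $y := p(s,t) \in X$, this functional equals $\ev_y \o \ell_*$, which is in fact continuous on $\AA(X,E)$: the map $\ell_* : \AA(X,E) \to \AA(X)$ is continuous by construction, and $\ev_y : \AA(X) \to \R$ factors as $\ev_0 \o c_y^*$ through the constant curve $c_y \equiv y$ and is therefore continuous. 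This yields the bounded inclusion.

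Finally, for strictness I would take $E = \R$ and let $X := \{(t,\phi(t)) : t \in \R\} \subseteq \R^2$ be the graph of a smooth function $\phi$ that is nowhere real analytic (such functions are classical). The crucial point is that every $p = (p_1,p_2) \in \cC^\om(\R^2,X)$ is constant: since the image lies in the graph we have $p_2 = \phi \o p_1$, so if $p$ were nonconstant then $p_1$ would be a nonconstant real analytic function; picking $(s_0,t_0)$ with $\partial_s p_1(s_0,t_0) \ne 0$ and locally inverting the one-variable real analytic function $s \mapsto p_1(s,t_0)$ near $s_0$ would express $\phi$ as real analytic near $p_1(s_0,t_0)$, a contradiction. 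Hence $\ell \o f \o p$ is constant for every $f : X \to \R$, so $\PA(X,\R)$ consists of all functions on $X$, whereas $\AA(X,\R) \subseteq \AS(X)$ is proper: for the smooth curve $c(t) := (t,\phi(t))$ in $X$ the function $f(x,y) := |x|$ gives $f \o c(t) = |t| \notin \cC^\infty(\R,\R)$, so $f \in \PA(X,\R) \setminus \AA(X,\R)$.

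The step I expect to be the main obstacle is this last claim: verifying that the graph of a nowhere analytic smooth function admits no nonconstant real analytic plots. The remaining parts are routine consequences of the curve description of real analyticity and the uniform boundedness principle.
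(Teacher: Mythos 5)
Your proof of the inclusion \eqref{e:PA1} and of its boundedness is essentially the paper's argument: the inclusion is obtained by composing a real analytic plot with smooth and real analytic curves and invoking the Bochnak--Siciak/Boman curve description of real analyticity on $\R^2$, and boundedness is reduced, via the uniform boundedness principle for $\cC^\om(\R^2,\R)$ and constant curves, to point evaluations (the paper packages this as \Cref{l:ubPAE}, but the mechanism is identical). Where you genuinely diverge is the strictness example. The paper takes the \emph{fat} closed set $X=\{(x,y): x\ge 0,\ x^{\sqrt 2}\le y\le x^{\sqrt2}+x^2\}$ and the characteristic function of the origin; the fact that every real analytic plot through $(0,0)$ is constant there is nontrivial and is imported from \cite[Example 6.7]{Rainer:2021tr}, and the failure of membership in $\AA(X)$ is exhibited by an explicit flat curve entering the cusp. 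You instead take $X$ to be the graph of a nowhere real analytic smooth function $\phi$ and observe that any $p=(p_1,p_2)\in\cC^\om(\R^2,X)$ satisfies $p_2=\phi\o p_1$, so a nonconstant $p$ would, after locally inverting $s\mapsto p_1(s,t_0)$ at a point of nonvanishing partial derivative, force $\phi$ to be analytic somewhere; hence all plots are constant, $\PA(X,\R)$ is everything, while $f(x,y)=|x|$ fails to be in $\AS(X)\supseteq\AA(X)$ along the smooth curve $t\mapsto(t,\phi(t))$. Your argument is correct and self-contained, and it buys elementarity: no external cuspidality result is needed. What the paper's choice buys is relevance to its main theme: its $X$ is fat and simple, showing that the discrepancy between $\AA$ and $\PA$ is not merely an artifact of thin sets with no interior but persists for the kind of sets the paper otherwise studies (contrast \Cref{t:AAPA} and \Cref{r:AAPA}). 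Since the lemma only asserts existence of some $X$ and $E$, your example fully suffices.
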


\begin{proof}
    Let $f \in \AA(X,E)$, $p \in \cC^\om(\R^2,X)$, and $\ell \in E^*$.
    We have to show that $g:=\ell \o f \o p \in \cC^\om(\R^2,\R)$ which holds since 
    $g$ respects $\cC^\infty$ and $\cC^\om$ curves in $\R^2$.

    Boundedness of the inclusion follows from \Cref{l:ubPAE}.

    Let $X:= \{(x,y) \in \R^2 : x \ge 0,~ x^{\sqrt 2} \le y \le x^{\sqrt 2} + x^2\}$.
    Then $f|_{X \setminus \{(0,0)\}} := 0$ and $f(0,0):=1$ belongs to $\PA(X)$ but not to $\AA(X)$. Indeed,
    any $p \in \cC^\om(\R^2,X)$ such that $(0,0) \in p(\R^2)$ must be constant, which follows from 
    \cite[Example 6.7]{Rainer:2021tr}. 
    On the other hand, for the $\cC^\infty$ curve $c : \R \to X$, 
    \[
        c(t) := \big(e^{-1/t^2}, e^{-\sqrt{2}/t^2}+ \tfrac{1}2 e^{-2/t^2}\big), \quad \text{ if } t\ne 0, 
        \quad c(0):= (0,0),
    \]
    the composite $f \o c$ is discontinuous.
    (One can argue similarly for the set defined in \Cref{ex:explawfails}.)
\end{proof}

We will see that \eqref{e:PA1} is an equality for open sets and Lipschitz sets $X \subseteq \R^d$. 
To this end we recall a result of Bochnak and Siciak.

\begin{theorem}[{\cite{Bochnak:2018aa}}] \label{thm:BSfindim}
    Let $U \subseteq \R^d$ be an open set, where $d\ge 2$.
    Let $f : U \to \R$ be a function.
    If the restriction $f|_{U\cap P}$ is real analytic for each affine $2$-plane $P$ in $E$, then $f$ is real analytic.
\end{theorem}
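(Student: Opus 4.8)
The plan is to prove the statement locally, since real analyticity is a local property: I fix $a \in U$ and produce an open ball $B(a,\rho)$ on which $f$ agrees with a convergent power series. The essential feature that distinguishes the $2$-plane hypothesis from the (provably insufficient) line hypothesis is that restricting $f$ to the $2$-planes through $a$ yields a family of genuinely two-variable real analytic germs, and the compactness of the space of such planes allows the extraction of \emph{uniform} Taylor estimates at $a$. I would then build the candidate analytic extension directly from directional data and identify it with $f$ on each plane, thereby avoiding any separate proof that $f$ is $\cC^\infty$.

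First I record the directional data. For each direction $v \in \R^d$ the line $t \mapsto a + t v$ lies in some $2$-plane through $a$, so $t \mapsto f(a+tv)$ is real analytic near $0$ and the derivatives $D_v^k f(a) := \tfrac{d^k}{dt^k}\big|_{t=0} f(a+tv)$ exist for all $k$. Restricting $f$ to the plane spanned by two directions shows that $v \mapsto D_v^k f(a)$ is, on every $2$-plane through the origin, a homogeneous polynomial of degree $k$; hence $P_k(v) := \tfrac{1}{k!} D_v^k f(a)$ is a global homogeneous polynomial of degree $k$ on $\R^d$, whose coefficients are exactly the putative Taylor coefficients $\tfrac{1}{\al!}\,\partial^\al f(a)$. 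The key step is to bound these polynomials uniformly in $k$. Parametrising the $2$-planes through $a$ by the Grassmannian (a compact metric space, hence a Baire space), I set, for $n \in \N$, the set $F_n$ of planes $P$ on which $f$ restricts to a power series converging on a disc of radius $1/n$ with sup-norm at most $n$. Each $F_n$ is closed and their union is everything, so by the Baire category theorem some $F_{n_0}$ has nonempty interior. On the associated open cone $\Omega \subseteq \R^d$ of directions one obtains, via Cauchy estimates on the corresponding planes, the simultaneous bounds $|D_v^k f(a)| \le n_0\, k!\, n_0^{k}$ for all $k$. Since $P_k$ is a polynomial of degree $k$, a bound on $|P_k|$ over the open set $\Omega$ controls all of its coefficients at the cost of a factor $C_d^{\,k}$ depending only on $d$ and $\Omega$; this yields $|\partial^\al f(a)| \le C\, \al!\, R^{-|\al|}$ for suitable $C,R>0$.

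It remains to identify $f$ with the sum of its Taylor series. The estimate above shows that $g(x) := \sum_{k \ge 0} P_k(x-a)$ converges to a real analytic function on some ball $B(a,\rho)$. On each $2$-plane $P$ through $a$, both $f|_P$ and $g|_P$ are real analytic — the former by hypothesis, the latter by construction — and they have identical Taylor coefficients at $a$, since these are read off from the $D_v^k f(a)$; hence $f = g$ on $P \cap B(a,\rho)$. As the $2$-planes through $a$ cover $B(a,\rho)$, we get $f = g$ near $a$, so $f$ is real analytic at $a$, and since $a \in U$ was arbitrary, $f$ is real analytic on $U$.

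I expect the main obstacle to be the uniformity in the Baire step: squeezing estimates that are valid on an \emph{open} family of planes (not merely plane-by-plane, where the radius of convergence may degenerate as the plane varies) out of the bare $2$-plane analyticity, and then checking that a bound over the open cone $\Omega$ really controls the polynomial coefficients with only an exponential-in-$k$ loss. This is precisely where two dimensions are indispensable: the Baire argument needs the extra direction to produce an open set of good planes, whereas with lines alone the analogous statement fails (there are functions analytic on every line through a point that are not even continuous there).
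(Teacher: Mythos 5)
The paper does not prove this statement: it is imported verbatim from Bochnak--Siciak \cite{Bochnak:2018aa} and used as a black box (see the remark following it and the proof of \Cref{t:AAPA}), so there is no internal proof to compare against and your argument has to stand on its own. Its overall architecture --- uniform Taylor estimates at a point via Baire category on the Grassmannian of $2$-planes, then resummation and identification plane by plane --- is reasonable, but two load-bearing steps are asserted rather than proved. The first is the passage from ``$v\mapsto D_v^kf(a)$ is a homogeneous degree-$k$ polynomial on every $2$-dimensional subspace'' to ``$P_k$ is a global homogeneous polynomial on $\R^d$''. This is not formal. The case $k=1$ is easy (additivity on independent pairs plus homogeneity gives linearity), but for $k\ge2$ the value of $P_k$ at, say, $e_1+e_2+e_3$ is not pinned down by the $2$-planes spanned by pairs of the points where you already know $P_k$, and one must rule out generalized-polynomial pathologies of Hamel-basis type. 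The statement is true, but it is essentially the Bochnak--Siciak theorem on polynomial mappings (which this paper cites as \cite{Bochnak_1971} for precisely this kind of purpose); you need to invoke it or reprove it, e.g.\ by a double induction on the dimension of linear sections and on $k$.

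The second gap is the closedness of $F_n$. As you define it --- via the sup-norm of $f$ on a disc of the plane --- closedness in $P$ would require comparing values of $f$ on nearby planes, and at this stage $f$ is not known to be continuous on $U$: separate analyticity along a family of planes through a point gives no transverse control (compare $xy^2/(x^2+y^4)$, which is analytic on every line through the origin yet discontinuous there). The step is repairable: once $P_k$ is known to be a genuine polynomial, hence continuous in $v$, the Taylor coefficients of $f|_P$ at $a$ (read off from $P_k|_{P-a}$ in a continuously varying frame) depend continuously on $P$, so defining $F_n$ by coefficient bounds $|c_\alpha|\le n^{|\alpha|+1}$ makes each $F_n$ closed, and Cauchy estimates give $\bigcup_nF_n=$ everything; note the order of the two steps is therefore essential, not optional. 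With these two repairs, the remaining ingredients --- the Chebyshev/Remez-type inequality bounding the coefficients of a degree-$k$ homogeneous polynomial by $C_\Omega^k$ times its sup over a fixed open cone $\Omega$, and the identity-theorem argument identifying $f$ with $\sum_kP_k(\cdot-a)$ on each connected disc $P\cap B(a,\rho)$ --- are sound. So the proposal is salvageable, but as written it has genuine holes exactly where the mathematical content of the theorem sits.
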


\begin{remark}
    The result is true for open subsets $U$ of infinite dimensional Banach spaces $E$ and \emph{continuous} functions $f : U \to \R$.
    By \cite[Theorem 7.5]{BochnakSiciak71}, it is enough to check that $f$ is infinitely Gateaux differentiable and
    $f|_{U\cap L}$ is real analytic for each affine line $L$ in $E$.
    That $f$ is infinitely Gateaux differentiable follows from \cite[Theorem 4]{Bochnak_1971}:
    by the finite dimensional case \Cref{thm:BSfindim}, $f|_{U \cap V}$ is real analytic for each finite-dimensional subspace $V$ of $E$.
\end{remark}

\begin{theorem} \label[t]{t:AAPA}
    Let $X \subseteq \R^d$ be open or a Lipschitz set and $E$ a convenient vector space.
    Then
    \begin{equation} \label{e:PA2}
        \AA(X,E)  = \PA(X,E)
    \end{equation}
    and the identity is a bornological isomorphism.
\end{theorem}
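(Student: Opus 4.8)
The plan is to establish the reverse inclusion $\PA(X,E) \subseteq \AA(X,E)$, since the bounded inclusion $\AA(X,E) \subseteq \PA(X,E)$ is exactly \Cref{l:AAinPA}. Because both sides are defined by testing with $\ell \in E^*$ --- indeed $f \in \PA(X,E)$ iff $\ell \o f \in \PA(X)$ for all $\ell$, and $f \in \AA(X,E)$ iff $\ell \o f \in \AA(X)$ for all $\ell$ --- it suffices to treat the scalar case and prove $\PA(X) \subseteq \AA(X)$. Recall that $\AA(X) = \{f \in \AS(X) : f_*\cC^\om(\R,X) \subseteq \cC^\om(\R,\R)\}$. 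The condition on $\cC^\om$ curves is immediate for $f \in \PA(X)$: given $c \in \cC^\om(\R,X)$, the map $p(s,t) := c(s)$ is a $\cC^\om$ plot $\R^2 \to X$, so $f \o p$ is real analytic, whence so is $f \o c$. Thus the entire problem reduces to showing that every $f \in \PA(X)$ lies in $\AS(X)$, i.e.\ respects $\cC^\infty$ curves.

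For open $X$ this follows from the Bochnak--Siciak theorem \Cref{thm:BSfindim}: it is enough to show that $f|_{X \cap P}$ is real analytic for every affine $2$-plane $P$ meeting $X$. Fix $x_0 \in X \cap P$ and an affine parametrization $a : \R^2 \to P \subseteq \R^d$. Since $X$ is open, $a$ maps a small disk around $a^{-1}(x_0)$ into $X$; composing $a$ with a real-analytic diffeomorphism of $\R^2$ onto such a disk (for instance $(s,t) \mapsto \rho\,(s,t)/\sqrt{1+s^2+t^2}$, suitably centered and scaled) produces a plot $p \in \cC^\om(\R^2,X)$ whose image is a neighborhood of $x_0$ in $P$. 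Then $f \o p \in \cC^\om(\R^2,\R)$ shows that $f|_{X \cap P}$ is real analytic near $x_0$; as $x_0$ was arbitrary, \Cref{thm:BSfindim} yields that $f$ is real analytic, hence $f \in \AA(X)$.

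For a Lipschitz set $X$ the interior $X^\o$ is open and every $\cC^\om$ plot into $X^\o$ is also a plot into $X$, so the open case already gives that $f|_{X^\o}$ is real analytic. The remaining, and main, difficulty is to control $f$ \emph{up to the boundary} using only $2$-dimensional analytic plots: a $\cC^\infty$ curve cannot be fattened into an analytic plot, so arc-smoothness at $\p X$ must be obtained indirectly. Here I would exploit that, $X$ being Lipschitz, the uniform $\al$-cusp property with $\al=1$ provides at each $x_0 \in \p X$ a genuine truncated interior cone $x_0 + A\,\Ga^1_d(r,h) \subseteq X^\o$, uniformly for nearby boundary points. Feeding real-analytic $2$-plots that run inside these cones and approach $x_0$ into the hypothesis $f \o p \in \cC^\om(\R^2,\R)$ should yield local boundedness and equicontinuity of all partial derivatives $\p^\al(f|_{X^\o})$ as the boundary is approached, where varying the $2$-planes through the cone recovers every partial derivative. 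Consequently each $\p^\al f$ extends continuously from $X^\o$ to $X$, and by the description of $\cC^\infty(X)$ for H\"older sets recalled before \Cref{l:ubC} this gives $f \in \cC^\infty(X) = \AS(X)$ via \Cref{t:smooth1}. Combined with the $\cC^\om$-curve condition already verified, this yields $f \in \AA(X)$. I expect this boundary-regularity step to be the crux, and would model the cone estimates on the cusp arguments of \cite{Rainer:2021tr, Rainer:2024aa}.

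Finally, the set-theoretic identity \eqref{e:PA2} is automatically a bornological isomorphism. Both $\AA(X,E)$ and $\PA(X,E)$ satisfy the uniform $\cS$-boundedness principle for $\cS = \{\ev_x : x \in X\}$: for $\AA(X,E)$ this is \Cref{l:ubAAnE}, and for $\PA(X,E)$ it follows from \Cref{l:ubPAE} together with \Cref{l:stab}, exactly as in the other vector-valued cases. Since the point evaluations are bounded and point-separating on both spaces and the underlying sets coincide, the identity and its inverse are both bounded, precisely as in the proof of \Cref{t:analytic2}.
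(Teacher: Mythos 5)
Your reduction to the scalar case, the treatment of the open case via \Cref{thm:BSfindim}, the observation that the $\cC^\om$-curve condition is automatic (by using plots constant in one variable), and the bornological-isomorphism argument via the uniform boundedness principles \Cref{l:ubAAnE} and \Cref{l:ubPAE} all match the paper's proof. The paper is equally brief on these points.

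The gap is the Lipschitz case, and you have correctly identified it as the crux --- but you have not proved it. Your paragraph on boundary regularity consists of a plan (``should yield local boundedness and equicontinuity of all partial derivatives\dots I expect this boundary-regularity step to be the crux''), not an argument: nothing is actually shown about how a family of real analytic $2$-plots running inside the interior cones forces uniform control of $\p^\al(f|_{X^\o})$ up to $\p X$, and it is not even clear a priori that $f|_{X^\o}$ has locally bounded derivatives near the boundary, since the analyticity of $f|_{X^\o}$ obtained from the open case carries no quantitative information as one approaches $\p X$. This is precisely the content of \cite[Theorem 1.3]{Rainer:2024aa}, which the paper cites at this point: for a Lipschitz set (more generally, for a simple closed set containing a nondegenerate simplex at each boundary point, cf.\ \Cref{r:AAPA}), a function whose composites with all $\cC^\om$ plots $\R^2 \to X$ are real analytic extends to a real analytic function on a neighborhood of $X$; combined with \Cref{t:analytic} this gives $\PA(X) \subseteq \cC^\om(X) = \AA(X)$ directly, bypassing your intermediate goal of first establishing $f \in \cC^\infty(X)$. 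That external theorem is a genuinely nontrivial result (it rests on checking analyticity on the simplices $X_z$ and on a Bochnak--Siciak-type statement for such sets), so the step you left as a sketch cannot be regarded as routine. If you do not want to import it as a black box, you would need to supply the full boundary argument, which is substantially longer than everything else in your proposal combined.
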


\begin{proof}
    The set-theoretic identity \eqref{e:PA2} follows, after composing with $\ell \in E^*$, from \Cref{l:AAinPA} and \Cref{thm:BSfindim} for open $X$ and 
    \cite[Theorem 1.3]{Rainer:2024aa} for Lipschitz sets.

    That the identity is a bornological isomorphism is a consequence of the fact that both 
    sides satisfy the uniform boundedness principle with respect to point evaluations; see \Cref{l:ubAAnE} and \Cref{l:ubPAE}.
\end{proof}

\begin{remark} \label[r]{r:AAPA}
    The theorem remains true if $X \subseteq \R^d$ is any simple closed set such that for each $z \in \p X$ there is a nondegenerate simplex $X_z$ 
    with $z \in X_z \subseteq X$. 
    In dimension $2$, it holds if this condition is fulfilled with compact fat subanalytic $X_z$, in particular, it is true for all
    H\"older sets and all simple closed fat subanalytic sets in $\R^2$. See \cite{Rainer:2024aa}.
\end{remark}

\begin{remark}
    \Cref{t:expPA} and \Cref{t:AAPA} together yield a shorter proof of the $\AA$ exponential law, see \Cref{t:expAn}(1), 
    in the cases where $X_1$, $X_2$, and $X_1 \times X_2$ satisfy the assumptions of \Cref{t:AAPA} or \Cref{r:AAPA}.
\end{remark}

%-----------------------------------------------------------------------------------------------------------------------
\subsection{Remarks on holomorphic maps} \label{ssec:hol}
%-----------------------------------------------------------------------------------------------------------------------

One cannot expect that holomorphic curves can detect holomorphy even on nice closed sets in $\C^d$ 
(by the open mapping theorem for holomorphic maps).

\begin{theorem}
    Let $X \subseteq \C^d \cong \R^{2d}$ be a H\"older set or a simple closed fat subanalytic set 
    and $E$ a complex convenient vector space. 
    If $f \in \AA(X,E)$ is such that its derivative $f'(x)$ is $\C$-linear for all $x \in X^\o$, then $f$ has a holomorphic extension to some 
    open neighborhood of $X$ in $\C^d$.
\end{theorem}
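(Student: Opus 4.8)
The plan is to reduce the claim to the scalar case, push the scalar real-analytic extensions into the holomorphic directions of $\C^d$ by exploiting the $\C$-linearity hypothesis, and then reassemble an $E$-valued holomorphic extension.

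First I would invoke \Cref{t:analytic2} to write $f \in \AA(X,E) = \cC^\om(X,E)$, and \Cref{r:derext} to ensure that $f|_{X^\o}$ is $\cC^\infty$ with all derivatives $f^{(n)}$ extending continuously to $X$. Fix $\ell \in E^*$ and set $g := \ell \o f \in \cC^\om(X,\C)$. On the open set $X^\o \subseteq \C^d$ the derivative $g'(x) = \ell \o f'(x)$ is $\C$-linear, i.e.\ $\bar\p g = 0$, so $g|_{X^\o}$ is holomorphic. Since $g$ is real analytic on $X$, it extends to a holomorphic function $\tilde g$ on a neighbourhood of $X$ in $\C^{2d}$ (complexifying the $2d$ real coordinates of $\R^{2d} \cong \C^d$). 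The crucial observation is that each holomorphic function $\p\tilde g/\p\bar z_j$ on this $\C^{2d}$-neighbourhood vanishes on $X^\o$, which is an open subset of the maximally totally real submanifold $\R^{2d} \subseteq \C^{2d}$; by the identity principle $\p\tilde g/\p\bar z_j$ vanishes near $X$. Hence $\tilde g$ is independent of the conjugate variables and descends to a genuinely $\C^d$-holomorphic extension $h_\ell$ of $g$ on an open neighbourhood $U_\ell$ of $X$ in $\C^d$.

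Next I would assemble $(h_\ell)_{\ell \in E^*}$ into a single $E$-valued holomorphic map. By uniqueness of holomorphic continuation from the open set $X^\o$, the assignment $\ell \mapsto h_\ell$ is linear, so $z \mapsto (\ell \mapsto h_\ell(z))$ is a scalarwise holomorphic map into the bidual $(E^*)^*$ that agrees with the honest $E$-valued holomorphic map $f$ on $X^\o$. The continuous boundary limits $f^{(n)}(x)$, being limits of derivatives of the holomorphic map $f|_{X^\o}$, are $\C$-multilinear, so the local germs built from the power series $\sum_n \tfrac1{n!} f^{(n)}(x)(v^n)$ at boundary points $x \in \p X$ are holomorphic in the $\C^d$-sense (rather than merely in $\C^{2d}$). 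As in the proof of the preceding theorem, these germs glue coherently with $f|_{X^\o}$ by the argument of \cite[Proposition 2.2]{Rainer:2024aa}, and boundedness and holomorphy of the resulting map $F : U \to E$ with $F|_X = f$ are read off from the uniform boundedness principle \Cref{l:ubAAnE}, exactly as elsewhere in this section.

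The main obstacle is producing the $E$-valued germs at boundary points without the Baire hypothesis on $E^*$ that powered the preceding theorem through \cite[Theorem 25.1]{KM97}. The replacement device is that $f$ is here already an honest $E$-valued holomorphic map on the open set $X^\o$: combined with the complex homogeneity coming from $\C$-multilinearity of the jets $f^{(n)}(x)$, the $\AA$-property along the real-analytic curves $t \mapsto x + t v$ that enter $X$ forces $E$-valued Mackey convergence of $\sum_n \tfrac1{n!} f^{(n)}(x)(v^n)\, t^n$ in the inward complex directions, and $c^\infty$-closedness of $E$ in $(E^*)^*$ then upgrades the scalarwise holomorphic candidate to an $E$-valued one on a full $\C^d$-neighbourhood. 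I expect verifying this upgrade---that the locally defined germs extend across $\p X$ in every complex direction and glue to a single holomorphic $F$---to be the technical heart of the proof.
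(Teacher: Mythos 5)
Your scalar argument is sound and is, at its core, the same idea the paper uses: the Cauchy--Riemann conditions hold on the open set $X^\o$ and propagate to the whole (connected) real analytic extension by the identity principle. The paper phrases this without any detour through $\C^{2d}$: it takes a real analytic extension $F$ of $f$ to an open connected neighborhood $U$ of $X$ in $\C^d\cong\R^{2d}$ (via \Cref{t:analytic2}), observes that for each fixed $v\in\C^d$ the real analytic map $g(x):=iF'(x)(v)-F'(x)(iv)$ vanishes on $X^\o$ and hence on all of $U$, and concludes that $F$ itself is the desired holomorphic extension. Crucially, the paper runs this argument directly on the \emph{$E$-valued} extension $F$, so there is nothing to reassemble afterwards.

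The genuine gap in your proposal is exactly the reassembly step you flag as ``the technical heart.'' Having scalarized, you obtain holomorphic extensions $h_\ell$ of $\ell\o f$, each on its own neighborhood $U_\ell$ of $X$, and you must produce a single $E$-valued holomorphic map on a common neighborhood. This is precisely the difficulty that the immediately preceding theorem in the paper overcomes by \emph{assuming} a Baire topology on $E^*$, which feeds into \cite[Theorem 25.1]{KM97} to get a uniform lower bound on the radii of convergence of the scalar power series $\sum_n \ell\bigl(\tfrac1{n!}f^{(n)}(x)(v^n)\bigr)t^n$ over $\ell\in E^*$. Your proposed substitute --- that the $\AA$-property along curves entering $X$ ``forces $E$-valued Mackey convergence'' of $\sum_n \tfrac1{n!}f^{(n)}(x)(v^n)t^n$, after which $c^\infty$-closedness in the bidual finishes the job --- is asserted rather than proved, and it is not a standard fact: positivity of the radius of convergence for every $\ell\in E^*$ separately does not yield convergence in $E$ on a uniform neighborhood for a general convenient $E$ without some such uniformity device. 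The way to close the gap is to abandon the scalarization: take the $E$-valued real analytic extension $F$ on a single open connected neighborhood as given by the real analytic theory, and apply your identity-principle argument to the $E$-valued maps $x\mapsto iF'(x)(v)-F'(x)(iv)$ (equivalently, scalarwise to $\ell\o F$ for the \emph{fixed} extension $F$), so that the holomorphic extension is $F$ itself and no gluing of germs at boundary points is required.
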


\begin{proof}
    By \Cref{t:analytic2}, $f$ has a real analytic extension $F$ to some open connected neighborhood $U$ of $X$.  
    For fixed $v \in \C^d$, the real analytic map $g(x) := i F'(x)(v)-F'(x)(iv)$ on $U$ vanishes on $X^\o$, thus on $U$.
    Hence $F$ is holomorphic.
\end{proof}

In this spirit we define 
\[
    \mathcal{AH}(X,E) := \{ f \in \AA(X,E) : f'(x) \text{ is $\C$-linear for all } x \in X^\o\}.  
\]
Then $\mathcal{AH}(X,E)$ is a closed linear subspace of $\AA(X,E)$ and thus a convenient vector space. 

\begin{theorem}
    Let $X_i \subseteq \R^{d_i}$, $i=1,2$, be simple closed fat subanalytic sets. 
    Then the following exponential law holds as bornological isomorphism:
    \[
        \mathcal{AH}(X_1,\mathcal{AH}(X_2,E)) \cong \mathcal{AH}(X_1 \times X_2,E).
    \]
\end{theorem}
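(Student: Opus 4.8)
The plan is to deduce this exponential law from the real analytic one, \Cref{t:expAn}(1), by showing that the bornological isomorphism $(\cdot)^\wedge : \AA(X_1,\AA(X_2,E)) \to \AA(X_1\times X_2,E)$ carries the subspace $\mathcal{AH}(X_1,\mathcal{AH}(X_2,E))$ bijectively onto $\mathcal{AH}(X_1\times X_2,E)$. Since $\mathcal{AH}(X_2,E)$ is a complex convenient vector space, the left-hand side is meaningful, and both sides are $c^\infty$-closed linear subspaces carrying the trace structures of the ambient $\AA$ spaces. A bornological isomorphism restricts to a bornological isomorphism between corresponding closed subspaces, so the whole statement reduces to the set-theoretic claim $f \in \mathcal{AH}(X_1,\mathcal{AH}(X_2,E)) \Leftrightarrow f^\wedge \in \mathcal{AH}(X_1\times X_2,E)$.

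First I would record three preparatory facts. (a) Each $g \in \AA(Y,E)$ is of class $\cC^\infty$ on $Y^\o$ with derivative extending continuously to $Y$ (the $\AA$-analog of \Cref{r:derext}), and the interior point evaluations $\{\ell \o \ev_y : y \in Y^\o,\ \ell \in E^*\}$ separate points of $\AA(Y,E)$, because $Y$ is fat and its elements are continuous. (b) Since $\mathcal{AH}(X_2,E)$ is a $c^\infty$-closed subspace of $\AA(X_2,E)$ and every bounded functional on it extends, one has $\AA(X_1,\mathcal{AH}(X_2,E)) = \{f \in \AA(X_1,\AA(X_2,E)) : f(X_1) \subseteq \mathcal{AH}(X_2,E)\}$ with coinciding structures. (c) Writing $J_i$ for the complex structure on the ambient space of $X_i$, the product carries the direct-sum structure $J(v_1,v_2) = (J_1 v_1, J_2 v_2)$, so a linear map $T$ on the product is $\C$-linear iff both $T(\cdot,0)$ and $T(0,\cdot)$ are $\C$-linear.

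Then comes the core computation. For $f \in \AA(X_1,\AA(X_2,E))$ and $(x_1,x_2) \in X_1^\o \times X_2^\o = (X_1 \times X_2)^\o$, the partial derivatives of $f^\wedge$ satisfy $D_1 f^\wedge(x_1,x_2) = \ev_{x_2} \o\, f'(x_1)$ and $D_2 f^\wedge(x_1,x_2) = (f(x_1))'(x_2)$. By (c), $(f^\wedge)'(x_1,x_2)$ is $\C$-linear iff both of these are. Since $\ev_{x_2}$ is $\C$-linear and the interior point evaluations separate points (fact (a)), $D_1 f^\wedge(x_1,x_2)$ is $\C$-linear for all $x_2 \in X_2^\o$ exactly when $f'(x_1)$ is $\C$-linear as a map into $\AA(X_2,E)$, equivalently into $\mathcal{AH}(X_2,E)$; and $D_2 f^\wedge(x_1,x_2)$ is $\C$-linear for all $x_2 \in X_2^\o$ exactly when $f(x_1) \in \mathcal{AH}(X_2,E)$. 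This gives the forward implication at once. For the converse, assuming $f^\wedge \in \mathcal{AH}(X_1\times X_2,E)$, the first equivalence yields $\C$-linearity of $f'(x_1)$ for every $x_1 \in X_1^\o$; and the map $(x_1,x_2) \mapsto D_2 f^\wedge(x_1,x_2)(J_2 v) - i\,D_2 f^\wedge(x_1,x_2)(v)$, which is continuous on $X_1 \times X_2$ (the derivative extends continuously) and vanishes on the dense subset $X_1^\o \times X_2^\o$, vanishes identically, so $f(x_1) \in \mathcal{AH}(X_2,E)$ for all $x_1 \in X_1$. By fact (b) this gives $f \in \AA(X_1,\mathcal{AH}(X_2,E))$, hence $f \in \mathcal{AH}(X_1,\mathcal{AH}(X_2,E))$.

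The main obstacle I expect is the derivative bookkeeping in the core computation: justifying the identification of the two partial derivatives of $f^\wedge$ with $f'$ and with the pointwise derivatives of the values $f(x_1)$ (this rests on smoothness on the interior and the vector valued derivative extension), and disposing of the boundary points of $X_1$ in the converse through the continuity-plus-fatness argument rather than through derivatives at the boundary. Once these points are settled, the bornological isomorphism follows immediately from \Cref{t:expAn}(1) by restriction to the closed subspaces.
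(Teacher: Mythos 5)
Your proposal is correct and follows essentially the same route as the paper: the paper's proof is exactly the reduction to \Cref{t:expAn}(1) plus the partial-derivative decomposition $f'(x_1,x_2)(v_1,v_2) = \ev_{x_2}\big((f^\vee)'(x_1)(v_1)\big) + \big((f^\vee)(x_1)\big)'(x_2)(v_2)$, which is precisely your identification of $D_1 f^\wedge$ and $D_2 f^\wedge$. The paper states this in one line; your additional care about separating points via interior evaluations and handling boundary points $x_1 \in \p X_1$ by continuity and fatness fills in details the paper leaves implicit.
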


\begin{proof}
    By \Cref{t:expAn}, it suffices to check that $\C$-linearity of the respective derivatives is transferred
    which follows from
    \[
        f'(x_1,x_2)(v_1,v_2) = \ev_{x_2} \big((f^\vee)'(x_1)(v_1)\big) + \big((f^\vee)(x_1)\big)'(x_2)(v_2).
    \]
\end{proof}

%-----------------------------------------------------------------------------------------------------------------------
\section{Ultradifferentiable maps} \label{sec:ultra}
%-----------------------------------------------------------------------------------------------------------------------

Ultradifferentiable functions form classes of $\cC^\infty$ functions defined by restrictions on the growth of 
the iterated derivatives. They include the real analytic class, Gevrey classes, Denjoy--Carleman classes, 
and Braun--Meise--Taylor classes.
We will focus on the latter since, under certain circumstances, they 
admit analogues of the smooth and real analytic results of \Cref{sec:smooth,sec:analytic}. 
For background on ultradifferentiable classes, see the survey article \cite{Rainer:2021aa}.

%-----------------------------------------------------------------------------------------------------------------------
\subsection{Weight functions} \label{ssec:wf}
%-----------------------------------------------------------------------------------------------------------------------

A \emph{weight function} is a continuous increasing function $\om : [0,\infty) \to [0,\infty)$ with $\om(0)=0$ and satisfying:
\begin{enumerate}
    \item $\om(2t) = O(\om(t))$ as $t \to \infty$;
    \item $\log t = o(\om(t))$ as $t \to \infty$;
    \item $\vh := \om \o \exp$ is convex on $[0,\infty)$.
\end{enumerate}
Two weight functions $\om$ and $\si$ are called \emph{equivalent} if $\om(t) = O(\si(t))$ and $\si(t)=O(\om(t))$ as $t \to \infty$.
Up to equivalence, we may always assume that $\om|_{[0,1]}=0$.
Let $\vh^*$ be the \emph{Young conjugate} of $\vh$,
\[
    \vh^*(t) = \sup_{s \ge 0} (st-\vh(s)),\quad t \ge 0.
\]
We associate with $\om$ a family $\{W^{[\xi]}\}_{\xi>0}$ of positive sequences:
\[
    W^{[\xi]}_k := \exp(\tfrac{1}{\xi} \vh^*(\xi k)), \quad k \in \N.
\]
We will also use $w^{[\xi]}_k := W^{[\xi]}_k/k!$.

\begin{lemma}[{\cite[Lemma 11.3]{Rainer:2021aa} or \cite{RainerSchindl12}}] \label[l]{l:prop}
    Let $\om$ be a weight function with associated family $\{W^{[\xi]}\}_{\xi>0}$.
    Then:
    \begin{enumerate}
        \item Each $W^{[\xi]}$ is a \emph{weight sequence}, i.e., $W^{[\xi]}$ is log-convex and satisfies $W^{[\xi]}_0 =1 \le W^{[\xi]}_1$ and $(W^{[\xi]}_k)^{1/k} \to \infty$ as $k \to \infty$. 
            In particular, $W^{[\xi]}$ is increasing.
        \item $W^{[\xi]}_k \le W^{[\ze]}_k$ for all $k$ if $\xi \le \ze$. 
        \item For all $\xi>0$ and all $j,k \in \N$, $W^{[\xi]}_{j+k} \le W^{[2\xi]}_jW^{[2\xi]}_k$.
        \item $\forall \rh>0 ~\exists H \ge 1 ~\forall \xi >0 ~\exists C \ge 1 ~\forall k \in \N : \rh^k W^{[\xi]}_k \le C W^{[H\xi]}_k$.
    \end{enumerate}
    It is evident, that \thetag{2}, \thetag{3}, and \thetag{4} also hold for the sequences $w^{[\xi]}$ instead of $W^{[\xi]}$.
\end{lemma}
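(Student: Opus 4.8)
The plan is to translate every assertion into a statement about the single convex function $\vh^*$, since $\log W^{[\xi]}_k = \tfrac{1}{\xi}\vh^*(\xi k)$. First I would record the basic convex-analytic dictionary. As a Young conjugate, $\vh^*$ is convex, and the normalization $\om|_{[0,1]}=0$ gives $\vh(0)=0$ and $\vh\ge 0$, whence $\vh^*(0)=\sup_{s\ge0}(-\vh(s))=0$ and $\vh^*\ge 0$. Axiom (2), $\log t = o(\om(t))$, says exactly that $\vh(s)/s\to\infty$, i.e.\ $\vh$ is superlinear, which makes the supremum defining $\vh^*(t)$ finite (and attained); conversely, finiteness of $\vh$ everywhere forces $\vh^*$ to be superlinear, because $\vh^*(t)/t\ge s_0-\vh(s_0)/t\to s_0$ for every fixed $s_0$. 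I would also note the elementary fact that for a convex function $\psi$ with $\psi(0)=0$ the ratio $t\mapsto\psi(t)/t$ is nondecreasing.

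With this dictionary, parts (1)--(3) are short. For (1): convexity of $\vh^*$ makes $k\mapsto\tfrac1\xi\vh^*(\xi k)$ a convex sequence, i.e.\ $W^{[\xi]}$ is log-convex; $\vh^*(0)=0$ gives $W^{[\xi]}_0=1$, and $\vh^*(\xi)\ge 0\cdot\xi-\vh(0)=0$ gives $W^{[\xi]}_1\ge1$; superlinearity of $\vh^*$ gives $(W^{[\xi]}_k)^{1/k}=\exp(\tfrac{1}{\xi k}\vh^*(\xi k))\to\infty$; and log-convexity together with $W^{[\xi]}_1\ge W^{[\xi]}_0$ forces the sequence to be increasing. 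For (2), applying the ratio monotonicity to $\psi=\vh^*$ shows that $\tfrac1\xi\vh^*(\xi k)=k\cdot\tfrac{\vh^*(\xi k)}{\xi k}$ is nondecreasing in $\xi$, which is exactly $W^{[\xi]}_k\le W^{[\ze]}_k$ for $\xi\le\ze$. Part (3) is just the midpoint convexity of $\vh^*$ at $\xi(j+k)=\tfrac12(2\xi j)+\tfrac12(2\xi k)$, which after dividing by $\xi$ and exponentiating becomes $W^{[\xi]}_{j+k}\le W^{[2\xi]}_jW^{[2\xi]}_k$.

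The main work is (4), where axiom (1) finally enters. I would first turn $\om(2t)=O(\om(t))$ into an additive-multiplicative bound for $\vh$: there are $L\ge1$, $M\ge0$ with $\vh(s+\log2)\le L\vh(s)+M$ for all $s\ge0$, and iterating this $n$ times gives $\vh(s+n\log2)\le L^n\vh(s)+M'$. Given $\rh>0$, I may assume $\rh>1$ (the case $\rh\le1$ follows from (2) with $C=1$), choose $n=\lceil\log_2\rh\rceil$, and set $H:=L^n$; monotonicity of $\vh$ then yields $\vh(s+\log\rh)\le H\vh(s)+M'$ with $H$ and $M'$ depending only on $\rh$. The inequality to be proved, after taking logarithms and writing both sides via $\tfrac1\xi\vh^*(\xi k)=\sup_{s\ge0}(sk-\vh(s)/\xi)$, reads $\sup_{s\ge0}(sk-\tfrac{\vh(s)}\xi)+k\log\rh\le\log C+\sup_{s\ge0}(sk-\tfrac{\vh(s)}{H\xi})$. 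I would prove it termwise (thereby avoiding any appeal to attainment): for each $s\ge0$ I bound the left summand using the admissible shift $s'=s+\log\rh\ge0$ in the right supremum; the two copies of $sk+k\log\rh$ cancel and what remains is precisely $\vh(s+\log\rh)\le H\vh(s)+H\xi\log C$, which holds once $C:=\exp(M'/(H\xi))\ge1$ is chosen. This respects the quantifier order $\forall\rh\,\exists H\,\forall\xi\,\exists C$, since $H$ depends only on $\rh$ and $C$ on $\rh$ and $\xi$. I expect this shift-and-cancel step, together with getting the quantifiers right, to be the only genuinely delicate point.

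Finally, the passage to $w^{[\xi]}_k=W^{[\xi]}_k/k!$ is immediate because $k!$ is independent of $\xi$: statements (2) and (4) survive division by $k!$ verbatim, and (3) uses in addition $(j+k)!\ge j!\,k!$ to pass from $W^{[2\xi]}_jW^{[2\xi]}_k/(j+k)!$ to $w^{[2\xi]}_jw^{[2\xi]}_k$.
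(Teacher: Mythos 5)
Your proof is correct, and the paper itself gives no argument for this lemma (it is quoted from \cite{Rainer:2021aa} and \cite{RainerSchindl12}, where the proofs run along exactly the convex-conjugate lines you use: log-convexity and the comparisons (2), (3) from convexity of $\vh^*$ with $\vh^*(0)=0$, and (4) from the additive-multiplicative bound $\vh(s+\log 2)\le L\vh(s)+M$ coming from $\om(2t)=O(\om(t))$, fed termwise into the supremum via the shift $s\mapsto s+\log\rh$). Your quantifier bookkeeping in (4) and the reduction of the $w^{[\xi]}$ statements via $(j+k)!\ge j!\,k!$ are both sound, so nothing needs to be added.
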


A property of a weight function, that is crucial for this paper, is introduced in the following definition.

\begin{definition}
    A weight function $\om$ is said to be \emph{robust} if it satisfies
    \begin{equation} \label{e:robust}
        \E B> 1 \A t \ge 0: \om(t^2) \le B \om(t) + B.
    \end{equation}
\end{definition}

\begin{lemma}
    Let $\om$ be a robust weight function with associated family $\{W^{[\xi]}\}_{\xi>0}$.
    Then 
    \begin{equation} \label{e:robust2}
        W^{[\xi]}_{2k} \le e^{1/\xi} W^{[B\xi]}_k, \quad k \in \N,\, \xi >0.
    \end{equation}
    Moreover,
    for every integer $a\ge 2$ there are constants $G,H> 1$ such
    \begin{equation} \label{e:robust3}
        W^{[\xi]}_{ak} \le e^{G/\xi} W^{[H\xi]}_k, \quad k \in \N,\, \xi >0.
    \end{equation}
    We may take $G:= \tfrac{B}{B-1}$ and $H := B^p$, 
    if $p$ is an integer with $a \le 2^p$.
\end{lemma}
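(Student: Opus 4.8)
The plan is to reduce both inequalities to a single duality estimate for the Young conjugate $\vh^*$ and then to iterate it. The first step is to translate the robust condition \eqref{e:robust} into a statement about $\vh$: substituting $t = e^s$ (so $t \ge 1$ corresponds to $s \ge 0$) and using $\vh = \om \o \exp$, the bound $\om(t^2)\le B\om(t)+B$ becomes
\[
    \vh(2s) \le B\,\vh(s) + B, \quad s \ge 0.
\]
Since $W^{[\xi]}_k = \exp(\tfrac1\xi \vh^*(\xi k))$, the claimed bound \eqref{e:robust2} is equivalent, after taking logarithms and multiplying by $\xi$, to
\[
    \vh^*(2r) \le 1 + \tfrac1B\,\vh^*(Br), \quad r \ge 0,
\]
applied with $r = \xi k$. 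Note that $\vh^*(r)$ is finite for every $r \ge 0$: property (2) in the definition of a weight function gives $\vh(s)/s \to \infty$, so in the supremum defining $\vh^*$ the integrand tends to $-\infty$.

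To prove this estimate I would rewrite the right-hand side as
\[
    \tfrac1B\,\vh^*(Br) = \tfrac1B \sup_{s \ge 0}\bigl(Brs - \vh(s)\bigr) = \sup_{s \ge 0}\bigl(rs - \tfrac1B \vh(s)\bigr),
\]
and compare it with $\vh^*(2r) = \sup_{s\ge0}(2rs - \vh(s))$ through the substitution $s \mapsto 2s$. Concretely, for any $s_0 \ge 0$ set $s_1 := 2s_0$; the translated robust bound $\vh(2s_0) \le B\vh(s_0) + B$ then gives
\[
    rs_1 - \tfrac1B\vh(s_1) = 2rs_0 - \tfrac1B\vh(2s_0) \ge 2rs_0 - \vh(s_0) - 1.
\]
Taking the supremum over $s_0$ yields $\tfrac1B\vh^*(Br) \ge \vh^*(2r) - 1$, which is the desired estimate and hence \eqref{e:robust2}. (If one does not wish to assume the suprema are attained, the same computation with an $\ep$-nearly optimal $s_0$ and $\ep \to 0$ works verbatim.)

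For \eqref{e:robust3} the plan is to iterate \eqref{e:robust2}, multiplying the weight parameter by $B$ at each step. Fixing an integer $a \ge 2$ and $p$ with $a \le 2^p$, successive application produces the telescoping chain
\[
    W^{[\xi]}_{2^p k} \le e^{1/\xi} W^{[B\xi]}_{2^{p-1}k} \le e^{1/\xi}e^{1/(B\xi)} W^{[B^2\xi]}_{2^{p-2}k} \le \cdots \le \exp\Bigl(\tfrac1\xi \sum_{j=0}^{p-1} B^{-j}\Bigr) W^{[B^p\xi]}_{k}.
\]
Since $B>1$, the geometric sum is bounded by $\sum_{j\ge0}B^{-j} = \tfrac{B}{B-1} = G$, giving $W^{[\xi]}_{2^p k} \le e^{G/\xi} W^{[H\xi]}_k$ with $H = B^p$. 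Finally, because $ak \le 2^p k$ and each sequence $W^{[\xi]}$ is increasing by \Cref{l:prop}(1), we have $W^{[\xi]}_{ak} \le W^{[\xi]}_{2^p k}$, which yields \eqref{e:robust3}. The only genuinely delicate point is the single substitution step underlying the equivalent $\vh^*$-estimate; once that duality computation is in place, the passage to \eqref{e:robust3} is a routine iteration combined with monotonicity of the sequences.
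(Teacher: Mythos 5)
Your proof is correct and follows essentially the same route as the paper: the key step is the conjugate-duality estimate $\vh^*(2r) \le 1 + \tfrac1B \vh^*(Br)$ (which is the paper's inequality $B\vh^*(\tfrac{2t}{B}) \le \vh^*(t)+B$ with $t=Br$, obtained by the same substitution $s\mapsto 2s$, i.e.\ $u\mapsto u^2$), followed by the identical telescoping iteration and monotonicity of $W^{[\xi]}$ to pass from $2^pk$ to $ak$.
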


\begin{proof}
    By \eqref{e:robust}, for all $t\ge 0$,
    \begin{align*}
        B \vh^*(\tfrac{2t}{B}) &= \sup_{s\ge 0} (2s t - B\vh(s)) = \sup_{u\ge 0} (2 t \log u - B\om(u)) 
        \\
                               &\le \sup_{u\ge 0} (t \log u^2 - \om(u^2)) +B = \sup_{v\ge 0} (t \log v - \om(v)) +B 
                               = \vh^*(t) +B. 
    \end{align*}
    Consequently,
    \begin{align*}
        W^{[\xi]}_{2k} = \exp(\tfrac{1}{\xi} \vh^*(2\xi k)) 
        &= \exp(\tfrac{B}{B\xi} \vh^*(\tfrac{2 B\xi k}{B})) 
        \\
        &\le \exp(\tfrac{1}{B\xi} (\vh^*(B\xi k)+B)) = e^{1/\xi} W^{[B\xi]}_k.
    \end{align*}
    Let $p\ge 1$ be an integer such that $a \le 2^p$. 
    Then \eqref{e:robust3} follows by iterating \eqref{e:robust2}:
    \begin{align*}
        W^{[\xi]}_{ak} \le W^{[\xi]}_{2^pk} \le e^{\tfrac{1}{\xi}} W^{[B\xi]}_{2^{p-1}k} 
        \le e^{\tfrac{1}{\xi}\big(1+\tfrac{1}{B}\big)} W^{[B^2\xi]}_{2^{p-2}k} \le 
        \cdots \le e^{\tfrac{1}{\xi} \sum_{i=0}^{p-1}\tfrac{1}{B^i}}  W^{[B^p\xi]}_{k}.
    \end{align*}
\end{proof}

\begin{lemma} \label[l]{l:robust>}
    Any robust weight function $\om$ is \emph{strong}, i.e.,
    \begin{equation} \label{eq:strong}
        \exists C >0 ~\A t > 0 : \int_1^\infty \frac{\om(tu)}{u^2}\, du \le C\om(t) +C,
    \end{equation}
    and equivalent to a concave weight function. 
\end{lemma}

\begin{proof}
    By \cite[Proposition 1.3]{MeiseTaylor88}, 
    a weight function $\om$ satisfies \eqref{eq:strong} if and only if there exists a constant $K>1$ such that 
    \[
        \limsup_{t\to \infty} \frac{\om(Kt)}{\om(t)} < K.
    \]
    Clearly, \eqref{e:robust} implies this condition. 

    It is furthermore proved in \cite[Proposition 1.3]{MeiseTaylor88} that 
    a strong weight function $\om$ is equivalent 
    to the concave weight function $\ka(t) := \int_1^\infty \frac{\om(tu)}{u^2}\, du$. %(\cite[Remark 3.20]{BonetMeiseTaylor92}).
\end{proof}

Any strong weight function $\om$ is \emph{non-quasianalytic}, meaning that
\[
   \int_1^\infty \frac{\om(t)}{t^2}\, dt < \infty. 
\]
If this integral is divergent, then $\om$ is called \emph{quasianalytic}.
We remark that, if $\om$ is non-quasianalytic, then each associated weight sequence $W^{[\xi]}$ is non-quasianalytic, 
i.e., $\sum_{k} (W^{[\xi]}_k)^{-1/k} < \infty$; see e.g.\ \cite[Theorem 11.17]{Rainer:2021aa} and \Cref{ssec:ws}.

\begin{example}
    For each $s>1$, $\om_s(t) := ((\log t)_+)^s$ is a robust weight function. 
    On the other hand, $\ga_s(t) := t^{1/s}$, for $s>1$, are strong weight functions that are not robust; 
    they give rise to the Gevrey classes $\cG^s = \cE^{\{\ga_s\}}$.
\end{example}

%-----------------------------------------------------------------------------------------------------------------------
\subsection{The ultradifferentiable classes \texorpdfstring{$\cE^{\{\om\}}(X)$}{Eomega(X)}} \label{d:Eom}
%-----------------------------------------------------------------------------------------------------------------------
Let $\om$ be a weight function.
Let $X \subseteq \R^d$ be nonempty and either open or closed. Let $\cE^{\{\om\}}(X)$ be the set of all functions $f \in \cC^\infty(X)$ 
such that for all compact $K \subseteq X$ there exists $\rh>0$ such that 
\begin{equation}
    \|f\|^\om_{K,\rh} := \sup_{x \in K}\sup_{\al \in \N^d} |\p^\al f(x)|\exp(-\tfrac{1}{\rh} \vh^*(\rh |\al|)) < \infty.
\end{equation}
Equivalently, $f \in \cE^{\{\om\}}(X)$ if and only if $f \in \cC^\infty(X)$ and
for all compact $K \subseteq X$ there exist $\xi,\rh>0$ such that 
\begin{equation}
    \sup_{x \in K}\sup_{\al \in \N^d} \frac{|\p^\al f(x)|}{\rh^{|\al|} W^{[\xi]}_{|\al|}} < \infty.
\end{equation}
This follows from \Cref{l:prop}(4); see \cite[Theorem 11.4]{Rainer:2021aa} and \cite[Theorem 5.14]{RainerSchindl12}.

We will be interested in the case that 
$X \subseteq \R^d$ is a simple fat closed subanalytic set. 
We topologize $\cE^{\{\om\}}(X)$ by 
\[
    \cE^{\{\om\}}(X) = \on{proj}_{n \in \N} \on{ind}_{m \in \N} \cE^\om_m(X \cap \ol B(0,n)),
\]
where 
\[
    \cE^\om_m(X \cap \ol B(0,n)) := \{ f \in \cC^\infty(X \cap \ol B(0,n)) : \| f\|^\om_{X \cap \ol B(0,n),m} < \infty\}
\]
is a Banach space, by Whitney's extension theorem.
The inductive limit can be equivalently written as an inductive limit with compact connecting mappings; see \Cref{l:compact}. 
It follows that $\cE^{\{\om\}}(X)$ is complete and webbed and hence 
satisfies the uniform boundedness principle with respect to point evaluations.

\begin{lemma} \label[l]{l:ubEom}
    Let $X \subseteq \R^d$ be a simple fat closed subanalytic set.
    Then $\cE^{\{\om\}}(X)$ satisfies the uniform $\cS$-boundedness principle for $\cS=\{\on{ev}_x : x \in X\}$.
\end{lemma}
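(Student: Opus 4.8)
The plan is to invoke the stability result \Cref{l:stab} in exactly the same way it was used for the smooth and real analytic cases (see \Cref{l:ubC}, \Cref{l:ubAn}). The key structural fact, as the text preceding the lemma already records, is that $\cE^{\{\om\}}(X)$ is \emph{webbed}: it is written as a countable projective limit of countable inductive limits of Banach spaces $\cE^\om_m(X \cap \ol B(0,n))$, and such a limit of webbed spaces is again webbed by the permanence properties of webbed spaces (cf.\ \cite[52.13 and 52.14]{KM97}). Being webbed, $\cE^{\{\om\}}(X)$ satisfies the uniform $\cS$-boundedness principle for \emph{any} point-separating family $\cS \subseteq (\cE^{\{\om\}}(X))'$, by \cite[5.24]{KM97}.

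First I would note that each point evaluation $\on{ev}_x : \cE^{\{\om\}}(X) \to \R$, $x \in X$, is a bounded linear functional: indeed, on each Banach space $\cE^\om_m(X \cap \ol B(0,n))$ the seminorm $\|\cdot\|^\om_{X\cap \ol B(0,n),m}$ dominates $\sup_{x}|f(x)|$ (take $\al=0$ in the defining supremum), so $\on{ev}_x$ is continuous on each building block and hence bounded on the projective--inductive limit. Second, the family $\cS = \{\on{ev}_x : x \in X\}$ is clearly point separating, since two functions agreeing at every point of $X$ are equal. These two observations are exactly the hypotheses needed to apply \cite[5.24]{KM97} to the webbed space $\cE^{\{\om\}}(X)$.

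I do not anticipate a genuine obstacle here; the only point requiring a little care is justifying that the particular limit structure defining $\cE^{\{\om\}}(X)$ is webbed. The text already asserts this (``$\cE^{\{\om\}}(X)$ is complete and webbed''), so in the proof I would simply cite that the class of webbed spaces is stable under countable projective and countable inductive limits, together with the fact that Banach spaces are webbed. The argument then reads as a one-line application of the uniform boundedness principle, entirely parallel to the proofs of \Cref{l:ubC} and \Cref{l:ubAn}, and a full write-up would be:

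\begin{proof}
    As noted above, $\cE^{\{\om\}}(X)$ is webbed, being built from Banach spaces by countably many projective and inductive limits; see \cite[52.13]{KM97}. For each $x \in X$ the point evaluation $\on{ev}_x$ is a bounded linear functional, since the defining seminorms dominate the supremum norm, and the family $\cS = \{\on{ev}_x : x \in X\}$ is point separating. Hence \cite[5.24]{KM97} yields the uniform $\cS$-boundedness principle.
\end{proof}
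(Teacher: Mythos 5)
Your proof is correct and follows essentially the same route as the paper, which justifies the lemma in the paragraph preceding its statement: the projective--inductive limit structure over the Banach spaces $\cE^\om_m(X \cap \ol B(0,n))$ makes $\cE^{\{\om\}}(X)$ webbed, and the uniform boundedness principle for the point separating family of (bounded) point evaluations then follows from \cite[5.24]{KM97}. The paper additionally notes (via \Cref{l:compact}) that the inductive limit has compact connecting maps, but that extra information is used for completeness rather than for webbedness, so your argument is complete as written.
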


The topology on $\cE^{\{\om\}}(X)$ is defined analogously if $X \subseteq \R^d$ is an open set (see e.g.\ \cite{BMT90}). 
In particular, this gives the topology on $\cE^{\{\om\}}(\R)$ that will be used below.

Let us recall some facts; details can be found in \cite{Rainer:2021aa}. 
The class $\cE^{\{\om\}}$ is non-quasianalytic and hence admits nontrivial functions with compact support if and only if $\om$ is non-quasianalytic.
It is stable under composition if and only if $\om$ is equivalent to a concave weight function (see \cite{FernandezGalbis06} and \cite{RainerSchindl12}).

If $\om$ is robust and $X\subseteq \R^d$ is a simple fat closed subanalytic set,
then each function in $\cE^{\{\om\}}(X)$ extends to a function in $\cE^{\{\om\}}(\R^d)$. Indeed, the strong weight functions are precisely those among the non-quasianalytic 
ones that admit a $\cE^{\{\om\}}$ version of the Whitney extension theorem, by \cite{BBMT91}.
That a function in $\cE^{\{\om\}}(X)$ defines a Whitney jet of class $\cE^{\{\om\}}$ on $X$
follows from \eqref{e:robust3} and \cite[Lemma 10.1]{Rainer18}.

%-----------------------------------------------------------------------------------------------------------------------
\subsection{Weight sequences} \label{ssec:ws} 
%-----------------------------------------------------------------------------------------------------------------------

By definition, a \emph{weight sequence} is a positive log-convex sequence $M=(M_k)$ satisfying $M_0=1\le M_1$ and $(M_k)^{1/k} \to \infty$ (see \Cref{l:prop}(1)). 
Log-convexity means that the sequence $\mu_k = M_k/M_{k-1}$ is increasing. 
Thus, a weight sequence is increasing and also $(M_k)^{1/k}$ is increasing.
Note that $(M_k)^{1/k} \le \mu_k$ and  
$(M_k)^{1/k} \to \infty$ if and only if $\mu_k \to \infty$; see \cite[Lemma 2.3]{Rainer:2021aa}.
A weight sequence $M=(M_k)$ is \emph{non-quasianalytic} if 
\[
    \sum_k \frac{1}{(M_k)^{1/k}}< \infty \quad \text{ or equivalently } \quad \sum_k \frac{1}{\mu_k} < \infty. 
\]

For later reference, we recall the definition of curves of class $\cE^{\{M\}}$ in a Banach space $E$: 
\[
    \cE^{\{M\}}(\R,E) := \Big\{ f \in \cC^\infty(\R,E) : \forall r>0 ~\exists \rh>0 : \sup_{|t|\le r} \sup_{k \in \N} \frac{\|f^{(k)}(t)\|}{\rh^k M_k} < \infty \Big\}.
\]

%-----------------------------------------------------------------------------------------------------------------------
\subsection{The space \texorpdfstring{$\AU(X)$}{AEomega}}
%-----------------------------------------------------------------------------------------------------------------------

Let $X \subseteq \R^d$ be nonempty. We consider the vector space of \emph{arc-$\cE^{\{\om\}}$ functions},
\[
    \AU(X) := \{f : X \to \R : f_* \cE^{\{\om\}}(\R,X) \subseteq \cE^{\{\om\}}(\R,\R)\}
\]
and equip it with the initial locally convex structure with respect to the family of maps 
\[
    \AU(X) \stackrel{c^*}{\longrightarrow}    \cE^{\{\om\}}(\R,\R), \quad c \in \cE^{\{\om\}}(\R,X),
\]
where $\cE^{\{\om\}}(\R,\R)$ carries the topology described in \Cref{d:Eom}.
Then the space $\AU(X)$ is $c^\infty$-closed in the product $\prod_{c \in \cE^{\{\om\}}(\R,X)} \cE^{\{\om\}}(\R,\R)$ 
and thus a convenient vector space.

\begin{lemma} \label[l]{l:ubAom}
    $\AU(X)$ satisfies the uniform $\cS$-boundedness principle for $\cS=\{\on{ev}_{x} : x \in X\}$.
\end{lemma}

\begin{proof}
    This follows from \Cref{l:stab} and the fact that $\cE^{\{\om\}}(\R,\R)$ is webbed.
\end{proof}

\begin{remark}
    (1) Let $\om$ be a quasianalytic concave weight function satisfying $\om(t) =o(t)$ as $t \to \infty$ (i.e., the quasianalytic class $\cE^{\{\om\}}$ strictly contains 
    the real analytic class).
    Then, by \cite[Theorem 3]{Rainer:2019aa}, for each integer $d \ge 2$ and each positive sequence $N=(N_k)$ there exists 
    $f \in \AU(\R^d) \cap \cC^\infty(\R^d)$ such that $f|_{\R^d \setminus \{0\}} \in \cE^{\{\om\}}(\R^d \setminus \{0\})$ 
    but, for all $r,\rh>0$,
    \[
        \sup_{x \in [-r,r]^d} \sup_{\al \in \N^d}  \frac{|\p^\al f(x)|}{\rh^{|\al|} N_{|\al|}} =\infty.
    \]
    Thus we will only consider non-quasianalytic weight functions $\om$.

    (2)
    The function $f: X:=\{(x,y) \in \R^2 : x^3=y^2\} \ni (x,y) \mapsto y^{1/3} \in \R$ belongs to $\AU(X)$ 
    for any concave weight function $\om$ (quasianalytic or non-quasianalytic), 
    which follows from \cite[Corollary 1.2]{Nenning:2021tv} (see also \cite{Nenning:2021wd} and \cite{Thilliez:2020ac}), 
    but clearly $f \not\in \cC^\infty(X)$. 
\end{remark}

%-----------------------------------------------------------------------------------------------------------------------
\subsection{\texorpdfstring{$\AU(X)$}{AEomega(X)} and \texorpdfstring{$\cE^{\{\om\}}(X)$}{Eomega(X)} coincide for 
robust \texorpdfstring{$\om$}{omega} and suitable \texorpdfstring{$X$}{X}}
%-----------------------------------------------------------------------------------------------------------------------

The following lemma goes back to \cite{Boman67}; we recall a version that appeared in \cite[Lemma 2.4]{Rainer18} and 
repeat the proof for later reference.

We start with the setup for the next lemma.
Choose a sequence 
\begin{equation} \label{eq:Tt}
    \text{$T_j \in (0,1]$ with $\sum_j T_j < \infty$ and let $t_k := 2 \sum_{j<k} T_j + T_k$.}
\end{equation}
Then $t_k \to t_\infty \in \R$.

Let $M=(M_k)$ be any non-quasianalytic weight sequence.
There exists a non-quasianalytic weight sequence $L = (L_k)$ such that $(M_k/L_k)^{1/k} \to \infty$
(use e.g.\ \cite[Corollary 3.5]{Rainer:2021aa} with $\al_k=1/\mu_k$).

Choose a decreasing sequence $\la_j>0$ such that the following conditions are fulfilled: 
\begin{align}
        &0 < \frac{\la_j}{T_j^k} \le \frac{M_k}{L_k} \quad \text{ for all }  j,k, \label{la1}
        \\
        &\frac{\la_j}{T_j^k} \to 0 \quad \text{ as }  j \to \infty \text{ for all } k. \label{la2}
\end{align}
It suffices to take $\la_j \le \inf_{k} T_j^{k+1} M_k/L_k$.

\begin{lemma}[{\cite[Lemma 2.4]{Rainer18}}] \label[l]{l:curve}
    Let $(T_k)$ and $(t_k)$ be the sequences defined in \eqref{eq:Tt}.
    Let $M=(M_k)$ and $L=(L_k)$ be non-quasianalytic weight sequences such that $(M_k/L_k)^{1/k} \to \infty$.
    Let $(\la_k)$ be a positive sequence satisfying \eqref{la1} and \eqref{la2}.    

    If $(c_k)$ is a sequence in $\cC^\infty(\R,E)$, where $E$ is a Banach space, such that the set 
    \begin{equation} \label{ass}
        \Big\{ \frac{c_k^{(\ell)}(t)}{\la_k} : t \in I, \, k,\ell \in \N \Big\}
    \end{equation}
    is bounded in $E$ for each bounded interval $I \subseteq \R$,
    then there exists $c \in \cE^{\{M\}}(\R, E)$ with compact support 
    such that $c(t_k+t) = c_k(t)$ for $|t| \le T_k/3$.
\end{lemma}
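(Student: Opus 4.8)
The plan is to glue shifted copies of the curves $c_k$ together by means of cutoff functions drawn from the \emph{finer} non-quasianalytic class $\cE^{\{L\}}$, and to verify the global $\cE^{\{M\}}$ bound by a Leibniz estimate in which the scaling of the cutoffs against the widths $T_k$ is exactly compensated by \eqref{la1}, while \eqref{la2} supplies the flatness needed at the accumulation point.

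First I would record the geometry. From \eqref{eq:Tt} one computes $t_{k+1}-t_k = T_k + T_{k+1}$, so the intervals $J_k := [t_k - T_k/3,\, t_k + T_k/3]$ are pairwise disjoint, separated by gaps of length $\tfrac23(T_k+T_{k+1})$, and accumulate only at $t_\infty = \lim_k t_k$. Since $L$ is non-quasianalytic, there is a template $\phi_0 \in \cE^{\{L\}}(\R)$ with $\phi_0 \equiv 1$ on $(-\infty,0]$, $\phi_0\equiv 0$ on $[1,\infty)$, and $|\phi_0^{(\ell)}| \le C\rho^\ell L_\ell$. Rescaling $\phi_0$ to transition over width $\sim T_k/3$ on either side of $J_k$, I obtain cutoffs $\chi_k \in \cE^{\{L\}}(\R)$ with $\chi_k \equiv 1$ on $J_k$, supports $\supp\chi_k \subseteq [t_k - 2T_k/3,\, t_k+2T_k/3]$ (still pairwise disjoint, again by \eqref{eq:Tt}), and $|\chi_k^{(\ell)}| \le C\rho^\ell T_k^{-\ell} L_\ell$ with $C,\rho$ independent of $k$. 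Setting $\tilde c_k(t) := c_k(t - t_k)$, I define
\[ c := \sum_k \chi_k \, \tilde c_k, \]
with $c := 0$ off $\bigcup_k \supp\chi_k$ (in particular for $t \ge t_\infty$). As the supports are disjoint, at most one summand is nonzero near any $t \ne t_\infty$, so $c$ is smooth away from $t_\infty$, satisfies $c(t_k+t) = c_k(t)$ for $|t|\le T_k/3$, and has support in the bounded set $[t_1 - 2T_1/3,\, t_\infty]$.

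The heart of the proof is the uniform $\cE^{\{M\}}$ estimate. Fix a bounded interval $I$ containing all differences $t-t_k$ occurring on $\supp\chi_k$ (possible since $|t-t_k|\le 2T_k/3 \le 2/3$); by \eqref{ass} there is $C_0$ with $\|c_k^{(m)}(s)\| \le C_0\la_k$ for $s \in I$ and all $k,m$. A Leibniz expansion together with the cutoff bounds gives, on $\supp\chi_k$,
\[ \big\|(\chi_k \tilde c_k)^{(\ell)}(t)\big\| \le C_0 C\,\la_k \sum_{i=0}^{\ell} \binom{\ell}{i}\, \rho^i\, T_k^{-i} L_i. \]
Now \eqref{la1} reads $\la_k T_k^{-i} L_i \le M_i$, and since $M$ is increasing (\Cref{l:prop}(1)) we have $M_i \le M_\ell$; hence the right-hand side is at most $C_0 C\, M_\ell \sum_{i=0}^\ell \binom{\ell}{i}\rho^i = C_0 C\,(1+\rho)^\ell M_\ell$, uniformly in $k$ and $t$. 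Because the summands have disjoint supports, the same estimate holds for $c$ itself, which is exactly the defining bound of $\cE^{\{M\}}$.

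It remains to cross the accumulation point, and here \eqref{la2} enters. For a fixed $\ell$, as $k\to\infty$ the quantity $\la_k T_k^{-i}\to 0$ for each $i$ by \eqref{la2}, so the finite sum in the displayed estimate forces $\|c^{(\ell)}(t)\| \to 0$ as $t \to t_\infty^-$; combined with $c\equiv 0$ for $t \ge t_\infty$ this shows $c$ extends $\cC^\infty$ across $t_\infty$ with $c^{(\ell)}(t_\infty)=0$ for all $\ell$. Thus $c \in \cC^\infty(\R,E)$, the uniform bound now holds on all of $\R$, and $c \in \cE^{\{M\}}(\R,E)$ has compact support with the required interpolation. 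The main obstacle is precisely this uniform-in-$k$ estimate: everything hinges on choosing the cutoffs from the finer class $\cE^{\{L\}}$ so that the scaling factors $T_k^{-i}L_i$ are absorbed into $M_i$ through \eqref{la1}, with \eqref{la2} delivering the flatness at $t_\infty$.
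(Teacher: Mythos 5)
Your proof is correct and follows essentially the same route as the paper: glue translated copies of the $c_k$ with $\cE^{\{L\}}$ cutoffs scaled to width $T_k$, estimate via Leibniz, absorb the factors $T_k^{-i}L_i$ into $M$ using \eqref{la1}, and use \eqref{la2} for flatness at $t_\infty$ (which the paper delegates to a citation but you spell out). The only cosmetic difference is that you apply \eqref{la1} termwise inside the Leibniz sum rather than after bounding the sum by $(1+\rho/T_k)^\ell L_\ell$; both yield the required bound $\mathrm{const}\cdot\rho'^{\,\ell}M_\ell$.
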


\begin{proof}
    Choose a $\cE^{\{L\}}$-function $\vh: \R \to [0,1]$ which is $0$ on $\{t : |t| \ge 1/2\}$ and $1$ 
    on $\{t : |t| \le 1/3\}$.   
    Define 
    \[
        c(t) := \sum_j \vh\Big(\frac{t-t_j}{T_j}\Big) c_j(t-t_j).
    \]
    The summands have disjoint supports. 
    Thus $c$ is $\cC^\infty$ on $\R \setminus \{t_\infty\}$. 
    By assumption \eqref{ass}, there is $R >0$ such that   
    \[
        \|c_k^{(\ell)}(t)\| \le R \la_k \quad \text{ for all }  |t|\le 1/2,\, \ell,k \in \N.
    \] 
    So there exist $C,\rh \ge 1$ such that, for $|t-t_j|\le T_j/2$,
    \begin{align*} 
        \|c^{(\ell)}(t)\| &= \Big\|\sum_{i=0}^\ell \binom{\ell}{i} T_j^{-i} \vh^{(i)}\Big(\frac {t-t_j}{T_j}\Big) c_j^{(\ell-i)}(t-t_j) \Big\|
        \\
                          &\le R\la_j \sum_{i=0}^\ell  \binom{\ell}{i} T_j^{-i} C \rh^{i}  L_{i} \notag
                          \le C R\la_j  \Big(1+\frac{\rh}{T_j}\Big)^{\ell}  L_{\ell}
                          \le C R\la_j  \Big(\frac{2\rh}{T_j}\Big)^{\ell}  L_{\ell}   .
    \end{align*}
    Consequently, by \eqref{la1},
    \[
        \|c^{(\ell)}(t)\| \le CR   (2\rh)^{\ell}   M_{\ell}     
        \quad \text{ for } t \ne t_\infty. 
    \]
    It follows that $c : \R \to E$ has compact support and is of class $\cE^{\{M\}}$ (cf.\ \cite[Lemma 2.9]{KM97} and \cite[Lemma 3.7]{KMRc}).
\end{proof}

\begin{lemma} \label[l]{l:omoo}
    Let $\om$ be a non-quasianalytic weight function.
    Let $X \subseteq \R^d$ be a H\"older set or a simple fat closed subanalytic set. 
    Then we have the bounded 
    inclusion
    \[ 
        \AU(X) \subseteq \AS(X) = \cC^\infty(X).
    \]
\end{lemma}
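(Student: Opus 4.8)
The plan is to reduce the statement to a set-theoretic inclusion and then obtain the bornological part for free. Since $\AS(X) = \cC^\infty(X)$ is already established in \Cref{t:smooth1}, it suffices to prove that $\AU(X) \subseteq \AS(X)$ and that this inclusion is bounded. The boundedness is the easy half: the inclusion $\iota \colon \AU(X) \to \cC^\infty(X)$ is a linear map out of the convenient vector space $\AU(X)$, so by the uniform boundedness principle for $\cC^\infty(X)$ (\Cref{l:ubC}) it is bounded as soon as $f \mapsto \ev_x(\iota(f)) = f(x)$ is bounded for every $x \in X$. This in turn follows from the very definition of the locally convex structure on $\AU(X)$: the constant curve $c_x \colon t \mapsto x$ lies in $\cE^{\{\om\}}(\R,X)$, so $\ev_x = \ev_0 \o c_x^*$ factors through a structure map $c_x^* \colon \AU(X) \to \cE^{\{\om\}}(\R,\R)$ followed by a point evaluation, both of which are continuous.

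The real content is the set-theoretic inclusion $\AU(X) \subseteq \AS(X)$, i.e.\ that a function respecting all $\cE^{\{\om\}}$-curves in $X$ also respects all $\cC^\infty$-curves. The difficulty is that a given smooth curve $c \in \cC^\infty(\R,X)$ need not itself be of class $\cE^{\{\om\}}$, so the hypothesis $f \in \AU(X)$ cannot be applied to it directly; one must manufacture, out of $c$, genuine $\cE^{\{\om\}}$-curves lying in $X$ that locally reproduce $c$. This is exactly Boman's device, and it is made available by \Cref{l:curve}: fixing an accumulation parameter and the data $(T_k),(t_k)$ from \eqref{eq:Tt} together with a sequence $\la_k$ satisfying \eqref{la1} and \eqref{la2}, one reparametrises $c$ on intervals clustering at that parameter and glues the reparametrised pieces. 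Choosing the reparametrisation scalings compatibly with $\la_k$ guarantees the boundedness hypothesis \eqref{ass} of \Cref{l:curve}, so the glued curve $\tilde c$ is of class $\cE^{\{W^{[\xi]}\}} \subseteq \cE^{\{\om\}}$ and agrees with the corresponding reparametrisation of $c$ on each central subinterval $|t - t_k| \le T_k/3$. Since $\tilde c$ takes its values in $X$, the hypothesis gives $f \o \tilde c \in \cE^{\{\om\}}(\R,\R) \subseteq \cC^\infty(\R,\R)$, and the central-interval identities transfer the existence and continuity of the derivatives back to $f \o c$. Carrying this out at every parameter and every order yields $f \o c \in \cC^\infty(\R,\R)$.

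The cleanest way to organise this is to observe that the proof of \Cref{t:smooth1} given in \cite{Rainer18} constructs all of its test curves precisely through \Cref{l:curve}, which delivers curves of class $\cE^{\{W^{[\xi]}\}} \subseteq \cE^{\{\om\}}$ rather than merely smooth ones. Consequently that argument never uses more than the behaviour of $f$ on $\cE^{\{\om\}}$-curves and applies \emph{verbatim} to $f \in \AU(X)$, producing $f \in \cC^\infty(X)$. I expect the main obstacle to be concentrated entirely at the boundary of $X$. On the interior the construction is unobstructed, since the small transition segments introduced by the glueing in \Cref{l:curve} stay inside the open set $X^\o$; but near a boundary point of the image one must ensure that the glued test curve remains inside the \emph{closed} set $X$, and that the $\cE^{\{\om\}}$-bounds on $f \o \tilde c$ genuinely convert into smoothness of $f \o c$ across boundary parameters rather than only on the shrinking central intervals. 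This is exactly where the hypotheses that $X$ be fat, simple, and H\"older or subanalytic enter, just as in the smooth case treated in \cite{Rainer18}.
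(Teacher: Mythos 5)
Your first and third paragraphs together are essentially the paper's proof. Boundedness is obtained exactly as you describe, from the uniform $\cS$-boundedness principle for $\cC^\infty(X)$ (\Cref{l:ubC}) together with the constant curves; and the set-theoretic inclusion is obtained by fixing $\xi_0>0$, observing that $f\in\AU(X)$ satisfies $f\o c\in\cC^\infty(\R,\R)$ for every curve $c$ of class $\cE^{\{W^{[\xi_0]}\}}\subseteq\cE^{\{\om\}}$ in $X$, and then invoking the ultradifferentiable test-curve theorems of \cite{Rainer18} (Theorems 1.13 and 1.14 there), whose test curves are exactly those produced by \Cref{l:curve}; this gives $f\in\cC^\infty(X)$, and $\cC^\infty(X)=\AS(X)$ from \Cref{t:smooth1} finishes the argument. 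The paper adds two technical caveats you omit but which are routine: the log-convexity of $(M_k/k!)$ assumed in \cite{Rainer18} is not needed, and in the subanalytic case one must check that composites of $\cE^{\{W^{[\xi_0]}\}}$ curves with the polynomial maps $\Ps_{x,v}$ remain in that class.

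Your second paragraph, however, describes a strategy that would fail, and it is not what the paper (or \cite{Rainer18}) does: you propose to manufacture, out of a \emph{given} smooth curve $c$, an $\cE^{\{\om\}}$ curve that locally reproduces $c$ by gluing reparametrised pieces via \Cref{l:curve}. Hypothesis \eqref{ass} requires the derivatives of \emph{all} orders of the pieces $c_k$ to be bounded by a fixed multiple of $\la_k$, uniformly in the order $\ell$. For a general smooth curve the quantities $\sup_I\|c^{(\ell)}\|$ may grow so rapidly in $\ell$ that no positive rescaling $t\mapsto c(s_k+\si_k t)$ achieves this: one would need $\si_k^\ell\,\sup_I\|c^{(\ell)}\|\le R\la_k$ for every $\ell$, which can force $\si_k=0$. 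This is why the test curves actually used are built from polynomial arcs $x_n+t^2\la_n v_n$ through a blow-up sequence of points, inside a proof by contradiction showing directly that $f$ defines a $\cC^\infty$ Whitney jet on $X$; no reproduction of an arbitrary smooth curve is ever attempted. Once $f\in\cC^\infty(X)$ is known, $f\o c\in\cC^\infty(\R,\R)$ for every smooth $c$ in $X$ is immediate from the chain rule applied to a smooth extension of $f$, so the "transfer back to $f\o c$" step you worry about never arises.
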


\begin{proof}
    We show the bounded inclusion 
    $\AU(X) \subseteq \cC^\infty(X)$; the rest was seen in \Cref{t:smooth1}.

    Let $\{W^{[\xi]}\}_{\xi>0}$ be the family of weight sequences 
    associated with $\om$ and fix $\xi_0>0$. 
    Let $f \in \AU(X)$ and $c \in \cE^{\{W^{[\xi_0]}\}}(\R,X)$.
    Then $f \o c \in \cC^\infty(\R,\R)$ since $\cE^{\{W^{[\xi_0]}\}}(\R,X) \subseteq \cE^{\{\om\}}(\R,X)$. 
    This implies that $f \in \cC^\infty(X)$ by \cite[Theorem 1.13]{Rainer18} if $X$ is a H\"older set.
    (In \cite{Rainer18} it was assumed that the sequence $(M_k/k!)$ is log-convex but this assumption 
    is not needed in the proof; cf.\ \Cref{l:curve}.)
    Similarly, we get that $f \in \cC^\infty(X)$ if $X$ is a simple closed fat subanalytic set:
    repeat the proof of \cite[Theorem 1.14]{Rainer18}, notice that 
    the composites of $\cE^{\{W^{[\xi_0]}\}}$ curves  with the polynomial maps $\Ps_{x,v}$ are 
    still $\cE^{\{W^{[\xi_0]}\}}$ curves (because $W^{[\xi_0]}$ being log-convex implies 
    the ring property), and use \cite[Lemma 2.6]{Rainer18} in the argument for Claim 1.

    That the inclusion is bounded follows from \Cref{l:ubA} or \Cref{l:ubC}.
\end{proof}

\begin{theorem} \label[t]{t:Lip}
    Let $\om$ be a robust weight function.
    For each Lipschitz set $X \subseteq \R^d$,  
    \begin{equation} \label{e:AUL}
        \AU(X) = \cE^{\{\om\}}(X). 
    \end{equation}
\end{theorem}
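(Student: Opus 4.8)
The plan is to establish the two inclusions in \eqref{e:AUL} separately. The inclusion $\cE^{\{\om\}}(X) \subseteq \AU(X)$ is the routine one, while $\AU(X) \subseteq \cE^{\{\om\}}(X)$ carries the real content; there the robustness of $\om$ is exactly what absorbs the loss of derivatives forced by the Lipschitz boundary geometry, so that the target class stays $\cE^{\{\om\}}$ rather than a strictly larger one.

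For $\cE^{\{\om\}}(X) \subseteq \AU(X)$ I would argue by composition stability. Since $\om$ is concave up to equivalence, the class $\cE^{\{\om\}}$ is stable under composition. Given $f \in \cE^{\{\om\}}(X)$ and a curve $c \in \cE^{\{\om\}}(\R,X)$, one writes $f \o c$ on $c^{-1}(X^\o)$ via the Fa\`a di Bruno formula and estimates it using the defining bounds on $\p^\al f$ over the compact set $c(I)$ together with the $\cE^{\{\om\}}$ bounds on $c$; the log-convexity and the mixed product estimate of \Cref{l:prop}(1),(3) for the sequences $W^{[\xi]}$ convert these into an $\cE^{\{\om\}}$ bound for $f \o c$, which extends to all of $\R$ by continuity of the derivatives of $f$ up to $\p X$. (When $\om$ is in addition strong, one may instead extend $f$ to $F \in \cE^{\{\om\}}(\R^d)$ and invoke composition stability on open sets directly.)

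For the converse, let $f \in \AU(X)$. By \Cref{l:omoo} (applicable since a Lipschitz set is in particular a H\"older set) we already know $f \in \cC^\infty(X)$, so every $\p^\al f$ exists on $X$ as the continuous extension from $X^\o$; it remains only to prove, for each compact $K \subseteq X$, a bound $\sup_{x \in K}\sup_\al |\p^\al f(x)|/(\rh^{|\al|} W^{[\xi]}_{|\al|}) < \infty$ for suitable $\rh,\xi>0$. I would reduce this to a statement about directional derivatives at boundary points. Fixing a boundary point and using the uniform cusp property with $\al=1$, after an orthogonal change of coordinates the cone $\Ga^1_d(r,h)$ is attached to every nearby point of $X$, and for $v$ in this cone the quadratic curve $c_{x,v}(t):=x+t^2 v$ stays in $X^\o$ for small $t\neq 0$. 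Since $f(x+s v)$ is a power series in $s$, the composite $g(t):=f(x+\la t^2 v)$ is a power series in $t^2$, whence $g^{(2n)}(0)=\tfrac{(2n)!}{n!}\la^n\, d_v^n f(x)$ with all odd derivatives vanishing. This is the precise ``$2n$ derivatives for $n$'' loss at a Lipschitz boundary, and it is where \eqref{e:robust2} enters: a bound $\rh^{2n} W^{[\xi]}_{2n}$ on the order-$2n$ derivative of $f\o c$ becomes, via $W^{[\xi]}_{2n}\le e^{1/\xi}W^{[B\xi]}_n$, a bound of the form $\mathrm{const}\cdot(\rh^2)^n W^{[B\xi]}_n$ on $d_v^n f(x)$, i.e.\ a bound at the same $\om$-scale with only the parameters rescaled (the same mechanism with \eqref{e:robust3} would handle sharper H\"older exponents $\al<1$, where the doubling becomes a factor $a=\lceil 2/\al\rceil$).

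To upgrade this into uniform bounds over $K$ from the mere hypothesis $f \in \AU(X)$, I would run the contradiction argument built on the curve lemma \Cref{l:curve}, adapting the H\"older Denjoy--Carleman arc theorem \cite[Theorem 1.13]{Rainer18} to the weight-sequence family $\{W^{[\xi]}\}$. Assuming the bound fails, one extracts data $(x_k,v_k,n_k)$ with $x_k\in K$, $|v_k|=1$ in the local cone, $n_k\to\infty$, and $|d_{v_k}^{n_k} f(x_k)|$ growing faster than every $\rh^{n_k} W^{[\xi]}_{n_k}$. Setting $M:=W^{[\xi_0]}$, choosing $L$ with $(M_k/L_k)^{1/k}\to\infty$, and picking $\la_k$ subject to \eqref{la1}--\eqref{la2}, one forms the rescaled segments $c_k(t):=x_k+\la_k t^2 v_k$; these satisfy the boundedness condition \eqref{ass}, so \Cref{l:curve} glues them into a single $c\in\cE^{\{M\}}(\R,X)\subseteq\cE^{\{\om\}}(\R,X)$ with $(f\o c)^{(2n_k)}(t_k)=\tfrac{(2n_k)!}{n_k!}\la_k^{n_k} d_{v_k}^{n_k} f(x_k)$. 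The main obstacle is the balancing of $T_k$, $\la_k$, and $n_k$ so that the glued curve is genuinely of class $\cE^{\{\om\}}$ while the amplitude factors $\la_k^{n_k}$ and the order doubling do not cancel the blow-up of $f\o c$; here one exploits that the failure is assumed for \emph{all} parameters $\rh,\xi$, and that \eqref{e:robust2} keeps the doubled-order data at the $\om$-scale, so the resulting $f\o c\notin\cE^{\{\om\}}(\R,\R)$ contradicts membership in $\AU(X)$ rather than in a larger class. Summing the directional bounds over a frame of cone directions spanning $\R^d$ then recovers the bounds on all $\p^\al f$, completing \eqref{e:AUL}.
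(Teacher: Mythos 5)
Your overall strategy is the paper's: the easy inclusion via Fa\`a di Bruno and composition stability, and the hard inclusion by contradiction, feeding the quadratic curves $x+\la t^2v$ (with $v$ in the Lipschitz cone) into the curve lemma \Cref{l:curve} and using \eqref{e:robust2} to absorb the $2n\to n$ derivative doubling. The mechanism you identify is exactly right, and your remark that one must exploit the failure of the bounds for \emph{all} $\rh,\xi$ is the correct quantifier game (the paper takes $C=e^{1/n}$, $\xi=Bn$, $\rh=\la_n^{-3}$). One small slip: $f$ is only $\cC^\infty$, not analytic, so $f(x+sv)$ is not a power series in $s$; the identity $g^{(2n)}(0)=\tfrac{(2n)!}{n!}\la^n d_v^nf(x)$ is instead justified by the chain rule, which is legitimate once $f\in\cC^\infty(X)$ is known from \Cref{l:omoo}.

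There are, however, two concrete gaps in your execution of the contradiction argument. First, you extract data $(x_k,v_k,n_k)$ with $x_k$ merely in a compact set $K$, but then the glued curve has no reason to converge at the accumulation parameter $t_\infty$, so it is not even a well-defined smooth curve. The paper first localizes the failure of the $\cE^{\{\om\}}$ bounds to a single point $a$ (via \cite[Proposition 7.2]{Rainer18}, which says the estimates can be tested on directional derivatives near a fixed point and in a fixed open set of directions), and then chooses $\de=c\la_{n+1}/3$ so that $x_n$ is Mackey ($\la_n^{-1}$-) convergent to $a$; only then does the glued curve extend by the constant $a$ past $t_\infty$ in the class $\cE^{\{W^{[\xi_0]}\}}$. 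Second, \Cref{l:curve} as stated produces a curve in the ambient Banach space, not in $X$: the cutoff construction $\sum_j\vh(\tfrac{t-t_j}{T_j})c_j(t-t_j)$ vanishes between the supports and so leaves $X$ in general. The paper repairs this by choosing connecting points $u_{n+1}\in C(x_n,\la_n)\cap C(x_{n+1},\la_{n+1})$ (possible because $|x_n-x_{n+1}|<c\la_{n+1}$ for a universal constant $c$ tied to the cone geometry) and interpolating between $c_n$ and the constants $u_n,u_{n+1}$; convexity of the translated truncated cones then keeps the curve inside $X^\o$. Without these two devices the curve you build is neither of class $\cE^{\{\om\}}$ at $t_\infty$ nor a curve in $X$, so the contradiction with $f\in\AU(X)$ does not yet follow.
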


\begin{proof}
    The inclusion $\cE^{\{\om\}}(X) \subseteq \AU(X)$ is an easy consequence of Fa\`a di Bruno's formula;
    the fact that $\om$ is (up to equivalence) concave, by \Cref{l:robust>}, 
    entails that the class $\cE^{\{\om\}}$ is stable under composition, 
    see \cite{Rainer:2021aa} or \cite{RainerSchindl12}.

    To prove the inclusion $\AU(X) \subseteq \cE^{\{\om\}}(X)$,
    let $f \in \AU(X)$. Then $f \in \cC^\infty(X)$, by \Cref{l:omoo}.
    Suppose for contradiction that $f \not\in \cE^{\{\om\}}(X)$. 
    Then, in view of \cite[Proposition 7.2]{Rainer18}, there exists $a \in X$ such that for all 
    $\de,C,\xi, \rh>0$ and all nonempty open subsets $V$ of $\mathbb S^{d-1}$  
    there exist $x \in X \cap B(a,\de)$, $v \in V$, and $k \in \N$
    such that
    \begin{equation} \label{e:contra}
        |d_v^k f(x)| > C \rh^k W^{[\xi]}_k. 
    \end{equation}
    We may assume that $a \in \p X$ because $\AU(X^\o) = \cE^{\{\om\}}(X^\o)$ (see \cite[Theorem 3.9]{KMRc} 
    the proof of which can be adapted easily to the case $\cE^{\{\om\}}$).
    Then, $X$ being a Lipschitz set, there is $\ep>0$ and a truncated open cone $\Ga$ 
    such that $y+\Ga \subseteq X^\o$ for all $y \in X \cap B(a,\ep)$ (in suitable coordinates).
    Set $C(y,r):= y + r \Ga$, for $0<r\le 1$.
    There is a universal constant $c>0$ such that $C(y_1,r_1)\cap C(y_2,r_2) \ne \emptyset$ if $|y_1-y_2| < c \min\{r_1,r_2\}$.

    Fix $\xi_0>0$  
    and a non-quasianalytic weight sequence $L$ satisfying  $(W^{[\xi_0]}_k/L_k)^{1/k} \to \infty$.
    Let $(T_n)$ and $(t_n)$ be the sequences defined in \eqref{eq:Tt} and
    $(\la_k)$ a sequence satisfying \eqref{la1} and \eqref{la2} for $M=W^{[\xi_0]}$.

    Let $B\ge 1$ be the constant from \eqref{e:robust}. 
    Taking $\de:= c \la_{n+1}/3$, $C := e^{1/n}$, $\xi := Bn$, $\rh := \la_n^{-3}$,
    and $V := \mathbb S^{d-1} \cap \R_+ \Ga$,
    we find sequences $x_n \in X \cap B(a,c\la_{n+1}/3)$, $v_n \in \mathbb S^{d-1} \cap \R_+ \Ga$, and $k_n \in \N$ such that
    \begin{equation} \label{e:contra2}
        |d_{v_n}^{k_n}f(x_n)| \ge e^{1/n} \la_n^{-3k_n} W^{[Bn]}_{k_n}\quad \text{ for all } n.
    \end{equation}

    Consider $C_n:= C(x_n,\la_n)$ which lies in $X^\o$ for sufficiently large $n$.
    Since $|x_n -x_{n+1}| < c \la_{n+1}$ there is a sequence $(u_n)$ of points in $X$ 
    satisfying $u_{n+1} \in C_n \cap C_{n+1}$ for all $n$.
    Note that $x_n$ and $u_n$ are $\la_n^{-1}$-convergent to $a$.

    After a translation, we may assume that $a=0$.
    Consider the curves  $c_n(t) = x_n + t^2 \la_n v_n$. 
    Choose a $\cE^{\{L\}}$-function $\vh: \R \to [0,1]$ which is $0$ on $\{t : |t| \ge 1/2\}$ and $1$ 
    on $\{t : |t| \le 1/3\}$.  

    For $t \in [t_n-T_n,t_n+T_n]$, we set
    \[
        c(t):= \vh(\tfrac{t-t_n}{T_n}) c_n(t-t_n) + (1-\vh(\tfrac{t-t_n}{T_n})) (u_n \mathbf 1_{(-\infty,t_n]}(t) 
        + u_{n+1} \mathbf 1_{[t_n,+\infty)}(t))
    \]
    and $c(t):=0$ for $t \in [t_\infty,\infty)$.
    By the proof of \Cref{l:curve}, $c$ is a curve of class $\cE^{\{W^{[\xi_0]}\}}$, in particular, of class $\cE^{\{\om\}}$,
    which lies in $X$, by construction.

    Now, for all $k$,
    \[
        (f \o c)^{(2k)}(t_n) = \frac{(2k)!}{k!} \la_n^k d_{v_n}^k f(x_n);
    \]
    here we use that $f \in \cC^\infty(X)$ so that the use of the chain rule is justified.
    By \eqref{e:robust2} and \eqref{e:contra2}, we conclude
    \begin{align*}
        \Big(\frac{|(f \o c)^{(2k_n)}(t_n)|}{W^{[n]}_{2k_n}}\Big)^{\frac{1}{2k_n}} 
         &= \Big(\frac{(2k_n)!\,\la_n^{k_n} |d_{v_n}^{k_n} f(x_n)|}{k_n!\,W^{[n]}_{2k_n}}\Big)^{\frac{1}{2k_n}}
         \\
         &\ge \Big(\frac{(2k_n)!}{k_n!}\frac{\la_n^{k_n} |d_{v_n}^{k_n} f(x_n)|}{ e^{1/n} W^{[Bn]}_{k_n}}\Big)^{\frac{1}{2k_n}}
         \ge \frac{1}{\la_n} \to \infty. 
    \end{align*}
    This contradicts the assumption $f \in \AU(X)$.
\end{proof}

\begin{lemma} \label[l]{l:pull}
    Let $\om$ be a robust weight function.
    Let $X_i \subseteq \R^{d_i}$, $i=1,2$, and $\vh \in \cE^{\{\om\}}(X_1,X_2)$, i.e., all components $\on{pr}_j \o \vh$ of the map $\vh : X_1 \to X_2$ belong to $\cE^{\{\om\}}(X_1)$.
    If $\AU(X_1) = \cE^{\{\om\}}(X_1)$, then $\vh^* \AU(X_2) \subseteq \cE^{\{\om\}}(X_1)$.
\end{lemma}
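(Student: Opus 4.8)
The plan is to exploit the assumed identity $\AU(X_1) = \cE^{\{\om\}}(X_1)$ in both directions, reducing the claim to a purely formal composition argument in which all the genuine analysis has already been packaged inside that hypothesis. Fix $f \in \AU(X_2)$; the goal is $f \o \vh \in \cE^{\{\om\}}(X_1)$. Since by assumption $\cE^{\{\om\}}(X_1) = \AU(X_1)$, I would first reduce this to showing $f \o \vh \in \AU(X_1)$, i.e.\ that $(f \o \vh) \o c \in \cE^{\{\om\}}(\R,\R)$ for every $c \in \cE^{\{\om\}}(\R,X_1)$.

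The key step is to push a given $\cE^{\{\om\}}$ curve $c$ in $X_1$ forward through $\vh$. Here I would use the \emph{other} inclusion $\cE^{\{\om\}}(X_1) \subseteq \AU(X_1)$ furnished by the hypothesis: each component $\vh_j := \on{pr}_j \o \vh$ lies in $\cE^{\{\om\}}(X_1) = \AU(X_1)$, so by the very definition of $\AU(X_1)$ the composite $\vh_j \o c$ belongs to $\cE^{\{\om\}}(\R,\R)$. Since a curve $\R \to \R^{d_2}$ is of class $\cE^{\{\om\}}$ exactly when each of its components is, and since $\vh \o c$ takes values in $X_2$, this will give $\vh \o c \in \cE^{\{\om\}}(\R,X_2)$.

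It then remains to feed this curve into $f$. Because $f \in \AU(X_2)$ and $\vh \o c \in \cE^{\{\om\}}(\R,X_2)$, the definition of $\AU(X_2)$ yields $f \o (\vh \o c) \in \cE^{\{\om\}}(\R,\R)$; by associativity $f \o (\vh \o c) = (f \o \vh) \o c$, so $f \o \vh$ respects every $\cE^{\{\om\}}$ curve in $X_1$, that is $f \o \vh \in \AU(X_1)$. Invoking the hypothesis a second time then gives $f \o \vh \in \cE^{\{\om\}}(X_1)$, which completes the argument as $f \in \AU(X_2)$ was arbitrary.

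I do not expect a serious analytic obstacle: the robustness of $\om$ and the hard estimates (Fa\`a di Bruno bounds, stability of $\cE^{\{\om\}}$ under composition, the Whitney-jet structure on the possibly non-open set $X_1$) are all absorbed into the assumed equality $\AU(X_1) = \cE^{\{\om\}}(X_1)$. The one point warranting a line of care is the pushforward $\vh \o c \in \cE^{\{\om\}}(\R,X_2)$, i.e.\ the componentwise characterization of vector-valued $\cE^{\{\om\}}$ curves, which guarantees that $\vh \o c$ is genuinely an $\cE^{\{\om\}}$ curve in $X_2$ once each $\vh_j \o c$ is $\cE^{\{\om\}}$; this is standard and combines with the fact that $\vh$ maps $X_1$ into $X_2$.
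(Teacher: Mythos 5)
Your argument is correct and follows the same skeleton as the paper's proof: test $f\o\vh$ along $\cE^{\{\om\}}$ curves in $X_1$, push each curve forward through $\vh$, and apply $f\in\AU(X_2)$ to the resulting curve in $X_2$. (The paper phrases this as a proof by contradiction, but that is cosmetic.) The one substantive step --- that $\vh\o c\in\cE^{\{\om\}}(\R,X_2)$ for every $c\in\cE^{\{\om\}}(\R,X_1)$ --- is where you genuinely diverge: the paper justifies it by the stability of the class $\cE^{\{\om\}}$ under composition (available because a robust $\om$ is concave up to equivalence), whereas you derive it from the other half of the hypothesis, namely $\cE^{\{\om\}}(X_1)\subseteq\AU(X_1)$ applied to the components $\vh_j=\on{pr}_j\o\vh$, combined with the componentwise characterization of vector-valued $\cE^{\{\om\}}$ curves (for the finitely many components one takes the maximum of the parameters $\rh$ and $\xi$, using the monotonicity of $W^{[\xi]}$ in $\xi$ from \Cref{l:prop}). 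Your route is slightly more self-contained: it invokes no composition theorem and, in particular, no chain rule for the formal derivatives of $\vh_j$ on the possibly non-open set $X_1$; the mild price is that you use both inclusions of the assumed identity $\AU(X_1)=\cE^{\{\om\}}(X_1)$, while the paper uses only $\AU(X_1)\subseteq\cE^{\{\om\}}(X_1)$ together with the composition stability that is anyway in force throughout the section. Both arguments are valid.
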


\begin{proof}
    Let $f \in \AU(X_2)$. Assume for contradiction that $f \o \vh \not \in \cE^{\{\om\}}(X_1) = \AU(X_1)$. 
    Thus there exists a $\cE^{\{\om\}}$ curve $c$ in $X_1$ such that $f\o \vh \o c \not \in \cE^{\{\om\}}(\R,\R)$. 
    Since $\vh\o c$ is a $\cE^{\{\om\}}$ curve in $X_2$ (because $\cE^{\{\om\}}$ is stable under composition) 
    this contradicts $f \in \AU(X_2)$.
\end{proof}

\begin{proposition} \label[p]{p:rect}
    Let $\om$ be a robust weight function.
    Let $X \subseteq \R^d$ be a fat closed subanalytic set.
    There is a locally finite collection of real analytic maps $\vh_\al : U_\al \to \R^d$, 
    where the $U_\al$ are open sets in $\R^d$, such that 
    \[
        \vh_\al^* \AU(X) \subseteq \cE^{\{\om\}}(\vh_\al^{-1}(X)) \quad \text{ for all } \al. 
    \]
\end{proposition}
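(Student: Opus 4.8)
The plan is to reduce the assertion, via a rectilinearization of the fat closed subanalytic set $X$, to the Lipschitz case established in \Cref{t:Lip}, and then to transport the conclusion along each rectilinearizing map by the pull-back \Cref{l:pull}. Throughout I use that a robust weight function is non-quasianalytic, so that $\om(t) = o(t)$ and hence $\cC^\om \subseteq \cE^{\{\om\}}$; in particular every real analytic map is of class $\cE^{\{\om\}}$.

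First I would invoke the rectilinearization theorem for subanalytic sets in the form adapted to the present setting in the appendix (\Cref{t:A1}), modelled on \cite{BelottoBierstoneChow17} (cf.\ \cite{BM88,ChaumatChollet99}). This produces a locally finite collection of real analytic maps $\vh_\al : U_\al \to \R^d$, each a finite composition of local blow-ups with $U_\al$ open in $\R^d$, whose images cover $X$ and such that every preimage $\vh_\al^{-1}(X)$ is, on each relatively compact piece, a finite union of closed coordinate quadrants $Q = \{x \in \R^d : x_i \ge 0 \text{ for } i \in I\}$. Each full-dimensional such quadrant is a convex polyhedral cone and thus a Lipschitz set (it satisfies the uniform $1$-cusp property); the lower-dimensional quadrants have empty interior and are invisible to the $\cE^{\{\om\}}$ estimates.

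Next, fix $\al$ and a full-dimensional quadrant $Q \subseteq \vh_\al^{-1}(X)$ from this decomposition. Since $Q$ is a Lipschitz set, \Cref{t:Lip} gives $\AU(Q) = \cE^{\{\om\}}(Q)$. The restriction $\vh_\al|_Q : Q \to X$ is real analytic, hence of class $\cE^{\{\om\}}$, and maps into $X$ because $Q \subseteq \vh_\al^{-1}(X)$; thus $\vh_\al|_Q \in \cE^{\{\om\}}(Q,X)$. \Cref{l:pull}, applied with $X_1 = Q$, $X_2 = X$, and $\vh = \vh_\al|_Q$, yields $(\vh_\al|_Q)^* \AU(X) \subseteq \cE^{\{\om\}}(Q)$. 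Hence, for every $f \in \AU(X)$, the composite $f \o \vh_\al$ restricts to a function of class $\cE^{\{\om\}}$ on each quadrant $Q$. As $f \o \vh_\al$ is a single function on $\vh_\al^{-1}(X)$, its derivatives extend continuously to the common faces from each side and therefore agree there; since $\cE^{\{\om\}}$ is determined by bounds on compact subsets of the interior, patching over the finitely many quadrants on each relatively compact piece gives $f \o \vh_\al \in \cE^{\{\om\}}(\vh_\al^{-1}(X))$. This proves $\vh_\al^* \AU(X) \subseteq \cE^{\{\om\}}(\vh_\al^{-1}(X))$ for all $\al$.

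The main obstacle is the rectilinearization itself — producing real analytic, locally finite maps $\vh_\al$ whose preimages $\vh_\al^{-1}(X)$ have the quadrant (normal crossings) structure required to invoke \Cref{t:Lip}. Granting this geometric input (\Cref{t:A1}, cf.\ \cite{BM88,BelottoBierstoneChow17,ChaumatChollet99}), the ultradifferentiable content is carried entirely by \Cref{t:Lip}, where robustness of $\om$ — through \eqref{e:robust2} and \eqref{e:robust3} — absorbs the loss of derivatives, together with the elementary composition argument behind \Cref{l:pull}; the only remaining care is the bookkeeping in patching the finitely many quadrants and discarding the harmless lower-dimensional strata.
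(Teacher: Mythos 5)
Your overall route is the paper's: rectilinearize $X$ so that each $\vh_\al^{-1}(X)$ becomes a union of quadrants, use \Cref{t:Lip} on quadrants (which are Lipschitz sets), and transport along $\vh_\al$ via \Cref{l:pull}. Two points need correction, one of citation and one of substance. First, the rectilinearization is \emph{not} \Cref{t:A1}: that appendix result is the composite-function theorem (controlling $f$ from $f\o\vh$ when the Jacobian of $\vh$ is a monomial times a unit), which is used later in the proof of \Cref{t:ultra}, not here. The source of the maps $\vh_\al$ is the rectilinearization theorem of Bierstone--Milman \cite{BM88}. Also, the paper does not discard lower-dimensional quadrants as ``invisible''; it observes that fatness of $X$ forces $I_0=\emptyset$, so only full-dimensional quadrants occur. (Were an isolated lower-dimensional quadrant present, the $\cE^{\{\om\}}$ estimates on it would not be vacuous, since they are taken over all of $K$, not over $K\cap Y^\o$.)

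The substantive gap is in your patching step. You apply \Cref{l:pull} quadrant by quadrant and obtain $f\o\vh_\al\in\cE^{\{\om\}}(Q)$ for each quadrant $Q$, then assert that because $f\o\vh_\al$ ``is a single function'', the derivatives computed from adjacent quadrants agree on common faces. That implication is false as stated: $x\mapsto|x|$ is a single function on $[-1,0]\cup[0,1]$, smooth on each closed piece, with mismatched one-sided derivatives at $0$. Membership in $\cC^\infty(Y)$ for the union $Y=\vh_\al^{-1}(X)$ (i.e.\ being a Whitney jet on $Y$) requires the jets from the various quadrants to match on intersections, and the reason they do is not that $f\o\vh_\al$ is a single function but that $f\o\vh_\al\in\AU(Y)$ --- it respects $\cE^{\{\om\}}$ (in particular enough $\cC^\infty$) curves that cross the faces, which forces the one-sided jets to coincide. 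This is exactly what the paper imports: it first proves the claim $\AU(Y)=\cE^{\{\om\}}(Y)$ for a union $Y$ of quadrants, getting $f\in\cC^\infty(Y)$ from (the proof of) \cite[Theorem 8.2]{Rainer18} and only then the quantitative estimates from \Cref{t:Lip} on each quadrant, and finally applies \Cref{l:pull} once with $X_1=Y$. Your argument becomes correct if you insert the observation that $f\o\vh_\al\in\AU(Y)$ (composition with the real analytic, hence $\cE^{\{\om\}}$, map $\vh_\al$ preserves the curve condition) and invoke the arc-smoothness result on unions of quadrants to glue the jets, rather than the ``single function'' assertion.
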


\begin{proof}
    We use the rectilinearization theorem for subanalytic sets (see \cite{BM88}).
    There exists a locally finite collection of real analytic maps $\vh_\al : U_\al \to \R^d$ 
    such that each $\vh_\al$ is the composite of a finite sequence of local blowings-up with smooth centers and 
    \begin{itemize}
        \item each $U_\al$ is diffeomorphic to $\R^d$ and there are compact subsets $K_\al \subseteq U_\al$ such that $\bigcup_\al \vh_\al(K_\al)$ 
            is a neighborhood of $X$ in $\R^d$,
        \item $\vh_\al^{-1}(X)$ is a union of quadrants in $\R^d$, for each $\al$.
    \end{itemize}
    A quadrant is a set 
    \[
        Q(I_0,I_-,I_+)= \{x \in \R^d : x_i=0 \text{ if } i \in I_0,\, x_i \le 0 \text{ if } i \in I_-,\,  x_i \ge 0 \text{ if } i \in I_+\},
    \]
    where $I_0,I_-,I_+$ is any partition of $\{1,2,\ldots,d\}$.
    In our case, $I_0 = \emptyset$ since $X$ is fat.

    We claim that for any union $Y$ of quadrants $Q(\emptyset,I_-,I_+)$ we have $\AU(Y) = \cE^{\{\om\}}(Y)$. 
    Then \Cref{l:pull} implies the assertion of the proposition.

    To see the claim, let $f \in \AU(Y)$. Then $f \in \cC^\infty(Y)$, by (the proof of) \cite[Theorem 8.2]{Rainer18}. 
    Hence it suffices to check that $f$ satisfies the defining estimates on each compact subset of $Y$ (see \Cref{d:Eom}).
    This follows from the fact that, by \Cref{t:Lip}, the estimates hold on all compact subsets of each of the finitely many quadrants that make up $Y$. 
    The inclusion $\cE^{\{\om\}}(Y) \subseteq \AU(Y)$ is a simple consequence 
    of the fact that $\cE^{\{\om\}}$ is stable under composition.
\end{proof}

\begin{theorem} \label[t]{t:ultra}
    Let $\om$ be a robust weight function.
    Let $X \subseteq \R^d$ be a simple fat closed subanalytic set.
    Then 
    \begin{equation} \label{e:AUS}
        \AU(X) = \cE^{\{\om\}}(X)
    \end{equation}
    and the identity is a bornological isomorphism.
\end{theorem}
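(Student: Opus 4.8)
The plan is to establish the set-theoretic equality \eqref{e:AUS} by proving the two inclusions separately, and then to upgrade it to a bornological isomorphism by the uniform boundedness principle. The inclusion $\cE^{\{\om\}}(X) \subseteq \AU(X)$ is the routine direction and follows exactly as in \Cref{t:Lip}: since $\om$ is robust it is concave up to equivalence, so $\cE^{\{\om\}}$ is stable under composition, and Fa\`a di Bruno's formula shows that composing $f \in \cE^{\{\om\}}(X)$ with any curve in $\cE^{\{\om\}}(\R,X)$ yields a curve in $\cE^{\{\om\}}(\R,\R)$.

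For the substantial inclusion $\AU(X) \subseteq \cE^{\{\om\}}(X)$, I would fix $f \in \AU(X)$ and first invoke \Cref{l:omoo} to obtain $f \in \cC^\infty(X)$; this makes the iterated derivatives $\p^\al f$ available on $X$, so the task reduces to verifying the defining $\cE^{\{\om\}}$ bounds on each compact $K \subseteq X$. Next I would apply the rectilinearization result \Cref{p:rect}, which provides a locally finite family of real analytic maps $\vh_\al : U_\al \to \R^d$ (each a finite composite of local blowings-up) with $\bigcup_\al \vh_\al(K_\al)$ a neighborhood of $X$ and with every pullback $f \o \vh_\al$ lying in $\cE^{\{\om\}}(\vh_\al^{-1}(X))$. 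Since finitely many of the sets $\vh_\al(K_\al)$ cover $K$, it remains only to descend these upstairs bounds through the $\vh_\al$ back to $f$, which is exactly the composite function theorem \Cref{t:A1} of the appendix (the adaptation of \cite{BelottoBierstoneChow17}).

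The hard part is this descent step. A composite function theorem of this kind incurs a loss of derivatives under blowings-up, so one cannot in general recover regularity in the original class. The decisive point is that the robustness of $\om$, in the quantitative form \eqref{e:robust3}, permits this loss to be absorbed by passing from the parameter $\xi$ to $H\xi$ within the same Braun--Meise--Taylor class $\cE^{\{\om\}}$; without \eqref{e:robust} the descended estimates would only place $f$ in a strictly larger class, and the identity would fail.

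Finally, to see that \eqref{e:AUS} is a bornological isomorphism I would argue as in \Cref{t:smooth1}. Both $\AU(X)$ and $\cE^{\{\om\}}(X)$ satisfy the uniform $\cS$-boundedness principle for $\cS = \{\on{ev}_x : x \in X\}$, by \Cref{l:ubAom} and \Cref{l:ubEom}. The point evaluations are bounded on each space --- continuous on $\cE^{\{\om\}}(X)$, and on $\AU(X)$ bounded by the defining structure applied to the constant curves $c_x : t \mapsto x$. Hence the identity is bounded in both directions, which gives the bornological isomorphism.
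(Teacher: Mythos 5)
Your proposal is correct and follows essentially the same route as the paper: Fa\`a di Bruno for the easy inclusion, then \Cref{l:omoo}, \Cref{p:rect}, and the composite function theorem \Cref{t:A1} for the descent, with \Cref{l:ubAom} and \Cref{l:ubEom} giving the bornological isomorphism. The only detail worth making explicit is the hypothesis of \Cref{t:A1} that the Jacobian determinant of each $\vh_\al$ is a monomial times a nowhere vanishing unit, which the paper arranges for the rectilinearizing maps before applying the theorem.
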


\begin{proof}
    Let us show the inclusion $\AU(X) \subseteq \cE^{\{\om\}}(X)$; the opposite inclusion follows easily by Fa\`a di Bruno's formula.
    To this end we work with the maps $\vh_\al$ from the proof of \Cref{p:rect}.
    We may assume that the Jacobian determinant of each $\vh_\al$ is a monomial times a nowhere vanishing factor.
    Let $f \in \AU(X)$. By \Cref{l:omoo}, we have $f \in \cC^\infty(X)$.
    By \Cref{p:rect}, $f \o \vh_\al \in \cE^{\{\om\}}(Y_\al)$, where $Y_\al$ is a union of quadrants $Q(\emptyset,I_-,I_+)$ in $\R^d$.
    By \Cref{t:A1},
    $f$ is of class $\cE^{\{\om\}}$ on $\vh_\al(Y_\al)$ for all $\al$.
    We conclude that $f \in \cE^{\{\om\}}(X)$.

    The identity \eqref{e:AUS} is a bornological isomorphism, by \Cref{l:ubEom} and \Cref{l:ubAom}.
\end{proof}

%-----------------------------------------------------------------------------------------------------------------------
\subsection{The vector valued case}
%-----------------------------------------------------------------------------------------------------------------------

Let $X\subseteq \R^d$ be nonempty and $E$ a convenient vector space.
We consider the set
$\AU(X,E)$ 
of all maps $f : X \to E$ such that $\ell \o f \o c \in \cE^{\{\om\}}(\R,\R)$ 
for all $c \in \cE^{\{\om\}}(\R,X)$ and $\ell \in E^*$.
We equip $\AU(X,E)$ with the initial locally convex structure with respect to the family of maps 
\[
    \AU(X,E) \stackrel{\ell_* \o c^*}{\longrightarrow} \cE^{\{\om\}}(\R,\R), \quad c\in \cE^{\{\om\}}(\R,X), ~ \ell \in E^*,
\]
which makes $\AU(X,E)$ a convenient vector space. 

\begin{lemma} \label[l]{l:ubAomE}
    $\AU(X,E)$ satisfies the uniform $\cS$-boundedness principle for $\cS=\{\on{ev}_{x} : x \in X\}$.
\end{lemma}

\begin{proof}
    This follows from \Cref{l:ubEom} (or \Cref{l:ubAom}) and \Cref{l:stab}.
\end{proof}

Let $X \subseteq \R^d$ be a simple fat closed subanalytic set. 
We define the set
\[
    \cE^{\{\om\}}(X,E) := \{ f : X \to E : \ell \o f \in \cE^{\{\om\}}(X) \text{ for all } \ell \in E^*\}
\]
and endow it with the initial locally convex structure with respect to the family of maps 
\[
    \cE^{\{\om\}}(X,E) \stackrel{\ell_*}{\longrightarrow} \cE^{\{\om\}}(X), \quad \ell \in E^*.
\]
Then $\cE^{\{\om\}}(X,E)$ is a convenient vector space. 

\begin{lemma} \label[l]{l:ubComE}
    Let $X \subseteq \R^d$ be a simple fat closed subanalytic set.
    Then $\cE^{\{\om\}}(X,E)$ satisfies the uniform $\cS$-boundedness principle for $\cS=\{\on{ev}_{x} : x \in X\}$.
\end{lemma}

\begin{proof}
    This follows from \Cref{l:ubEom} and \Cref{l:stab}.
\end{proof}

\begin{theorem} \label[t]{t:ultra2}
    Let $\om$ be a robust weight function.
    Let $X \subseteq \R^d$ be a simple closed fat subanalytic set and $E$ a convenient vector space. Then
    \[
        \AU(X,E) = \cE^{\{\om\}}(X,E)
    \]
    and the identity is a bornological isomorphism.
\end{theorem}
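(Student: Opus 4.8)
The plan is to reuse verbatim the template that yielded the vector valued smooth and real analytic statements, \Cref{t:smooth2} and \Cref{t:analytic2}: all of the genuine content is already packaged in the scalar identity \Cref{t:ultra} (which in turn rests on rectilinearization, \Cref{p:rect}, and the extension result \Cref{t:A1}), so the passage to a convenient vector space target $E$ should be entirely formal, obtained by testing against continuous linear functionals $\ell \in E^*$.

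First I would deduce the set-theoretic identity from \Cref{t:ultra}. Composing with $\ell \in E^*$ and unwinding the two definitions gives the chain
\begin{align*}
    f \in \AU(X,E)
    &\Leftrightarrow \forall \ell \in E^* ~\forall c \in \cE^{\{\om\}}(\R,X): \ell \o f \o c \in \cE^{\{\om\}}(\R,\R) \\
    &\Leftrightarrow \forall \ell \in E^* : \ell \o f \in \AU(X) \\
    &\Leftrightarrow \forall \ell \in E^* : \ell \o f \in \cE^{\{\om\}}(X) \\
    &\Leftrightarrow f \in \cE^{\{\om\}}(X,E),
\end{align*}
where the only step carrying any content is the third equivalence, which is precisely the scalar \Cref{t:ultra}. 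The first and last equivalences are just the respective definitions of $\AU(X,E)$ and $\cE^{\{\om\}}(X,E)$, and the second restates the definition of $\AU(X)$.

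For the bornological statement I would invoke the two uniform boundedness lemmas \Cref{l:ubAomE} and \Cref{l:ubComE}, which assert that both $\AU(X,E)$ and $\cE^{\{\om\}}(X,E)$ satisfy the uniform $\cS$-boundedness principle for the common family $\cS = \{\on{ev}_x : x \in X\}$. To see that the identity $\AU(X,E) \to \cE^{\{\om\}}(X,E)$ is bounded, I would apply \Cref{l:ubComE}: it suffices that $\on{ev}_x$ be bounded on $\AU(X,E)$ for each $x \in X$, and this holds because $\on{ev}_x$ factors through the defining maps $\ell_* \o c^*$ upon taking $c$ to be the constant curve $c_x \equiv x$. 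The reverse inclusion is symmetric: since $\on{ev}_x$ is bounded on $\cE^{\{\om\}}(X,E)$ (it is built into its initial structure), \Cref{l:ubAomE} makes the identity $\cE^{\{\om\}}(X,E) \to \AU(X,E)$ bounded as well.

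I expect no serious obstacle at this stage, since every difficulty has been absorbed into \Cref{t:ultra}. The only point deserving attention is that the structure on $\AU(X,E)$ is indexed by pairs $(\ell,c)$, so one must confirm that evaluations at points are genuinely controlled by that structure; the constant-curve observation supplies this, exactly as in the proofs of \Cref{t:smooth2} and \Cref{t:analytic2}.
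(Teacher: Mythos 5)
Your proposal is correct and follows essentially the same route as the paper: the set-theoretic identity is obtained from \Cref{t:ultra} by composing with functionals $\ell \in E^*$, and the bornological statement follows from the uniform boundedness principles in \Cref{l:ubAomE} and \Cref{l:ubComE}. The additional detail you supply (factoring $\on{ev}_x$ through $\ell_* \o c_x^*$ for constant curves $c_x$) is exactly the verification the paper leaves implicit, mirroring the proofs of \Cref{t:smooth2} and \Cref{t:analytic2}.
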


\begin{proof}
    The set-theoretic identity follows from \Cref{t:ultra}, by composing with $\ell \in E^*$.  
    It is a bornological isomorphism, by \Cref{l:ubAomE} and \Cref{l:ubComE}. 
\end{proof}

%-----------------------------------------------------------------------------------------------------------------------
\subsection{Exponential laws}
%-----------------------------------------------------------------------------------------------------------------------

\begin{theorem}
    Let $\om$ be a robust weight function.
    Let $X_i \subseteq \R^{d_i}$, $i=1,2$, 
    be simple closed fat subanalytic sets.
    Let $E$ be a convenient vector space.
    Then the following exponential laws hold as bornological isomorphisms:
    \begin{enumerate}
        \item  $\AU(X_1,\AU(X_2,E)) \cong \AU(X_1 \times X_2,E)$;
        \item $\cE^{\{\om\}}(X_1,\cE^{\{\om\}}(X_2,E)) \cong \cE^{\{\om\}}(X_1 \times X_2,E)$.
    \end{enumerate}
\end{theorem}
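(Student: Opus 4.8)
The plan is to prove the exponential law (1) for $\AU$ directly, by exhibiting the canonical map $(\cdot)^\wedge$ as a bornological isomorphism, and then to deduce (2) from (1) by the same formal bookkeeping used in \Cref{t:exp}(2) and \Cref{t:expAn}(2). Compared with the real analytic case this is cleaner, since $\AU$ is defined by a single class of test curves, so none of the ``mixing'' cases (ii)--(iii) of \Cref{t:expAn}(1) occur. Throughout I use that robustness of $\om$ makes $\cE^{\{\om\}}$ stable under composition, so that pullbacks by $\cE^{\{\om\}}$ maps---in particular by the diagonal $\R \to \R^2$ and by product curves---stay in $\cE^{\{\om\}}$; I also use the $\cE^{\{\om\}}$ theory on open domains from \cite{KMRc,KMRu,Schindl14a}.

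For the set-theoretic bijection in (1), I would start from the observation that, by the definition of the initial structures and the detection of $\cE^{\{\om\}}$-curves into a convenient vector space by its structure maps, $f \in \AU(X_1,\AU(X_2,E))$ holds exactly when $\ell_* \o c_2^* \o f \o c_1 : \R \to \cE^{\{\om\}}(\R,\R)$ is of class $\cE^{\{\om\}}$ for all $c_1 \in \cE^{\{\om\}}(\R,X_1)$, $c_2 \in \cE^{\{\om\}}(\R,X_2)$, and $\ell \in E^*$. By the $\cE^{\{\om\}}$ exponential law on open domains this is equivalent to the two-variable map $(s,t) \mapsto \ell\big(f^\wedge(c_1(s),c_2(t))\big)$ being of class $\cE^{\{\om\}}$ on $\R^2$. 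To conclude that $f^\wedge \in \AU(X_1 \times X_2,E)$, I take an arbitrary $c=(c_1,c_2) \in \cE^{\{\om\}}(\R,X_1 \times X_2)$ and restrict that two-variable map to the diagonal, which preserves $\cE^{\{\om\}}$; this yields $\ell \o f^\wedge \o c \in \cE^{\{\om\}}(\R,\R)$.

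The converse direction is the step I expect to require the most care. For $g \in \AU(X_1 \times X_2,E)$ I first check that $g^\vee(x_1) \in \AU(X_2,E)$ for each $x_1$ (using the curves $t \mapsto (x_1,c_2(t))$). The crux is to show that $g^\vee \o c_1$ is an $\cE^{\{\om\}}$-curve into $\AU(X_2,E)$, equivalently that $\ell \o g \o (c_1 \times c_2) : \R^2 \to \R$ is of class $\cE^{\{\om\}}$, where $c_1 \times c_2$ is the plot $(s,t) \mapsto (c_1(s),c_2(t))$. This plot is a genuine two-dimensional $\cE^{\{\om\}}$ map, so the hypothesis that $g$ respects $\cE^{\{\om\}}$ \emph{curves} does not apply to it directly. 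I would resolve this exactly as in case (i) of \Cref{t:expAn}(1): by the ultradifferentiable Boman theorem on the open set $\R^2$, i.e. the identity $\AU(\R^2)=\cE^{\{\om\}}(\R^2)$ \cite{KMRc}, it suffices that $\ell \o g \o (c_1 \times c_2)$ respects $\cE^{\{\om\}}$-curves in $\R^2$; and for any $\gamma \in \cE^{\{\om\}}(\R,\R^2)$ the composite $(c_1 \times c_2) \o \gamma$ is an $\cE^{\{\om\}}$-curve in $X_1 \times X_2$ by stability under composition, whence $\ell \o g \o (c_1 \times c_2) \o \gamma \in \cE^{\{\om\}}(\R,\R)$ because $g \in \AU(X_1 \times X_2,E)$. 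Reading this back through the open-domain exponential law gives $g^\vee \in \AU(X_1,\AU(X_2,E))$.

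It remains to see that $(\cdot)^\wedge$ and $(\cdot)^\vee$ are bounded, which I would carry out verbatim as in \Cref{t:expAn}(1). By \Cref{l:ubAomE} applied to $\AU(X_1 \times X_2,E)$, the map $(\cdot)^\wedge$ is bounded once $f \mapsto \ell(f^\wedge(x_1,x_2))$ is bounded for all $x_1,x_2,\ell$, which is immediate from the structure on $\AU(X_1,\AU(X_2,E))$ (view $x_1,x_2$ as constant curves, noting $\ell_* \o x_2^*$ is a continuous linear functional on $\AU(X_2,E)$); symmetrically, applying \Cref{l:ubAomE} twice reduces boundedness of $(\cdot)^\vee$ to boundedness of $g \mapsto \ell(g(x_1,x_2))$, which comes from the structure on $\AU(X_1 \times X_2,E)$. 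Finally (2) follows formally: by \Cref{t:ultra2} the target $\AU(X_2,E)=\cE^{\{\om\}}(X_2,E)$ as convenient vector spaces, so $\AU(X_1,\AU(X_2,E))=\AU(X_1,\cE^{\{\om\}}(X_2,E))=\cE^{\{\om\}}(X_1,\cE^{\{\om\}}(X_2,E))$ (again \Cref{t:ultra2}), while $\AU(X_1 \times X_2,E)=\cE^{\{\om\}}(X_1 \times X_2,E)$ by \Cref{l:prod} and \Cref{t:ultra2}; combining these bornological isomorphisms with (1) yields (2).
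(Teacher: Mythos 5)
Your proof is correct and follows essentially the same route as the paper: both directions are reduced, via the test maps $\ell_* \o c_2^*$, to the known exponential law $\cE^{\{\om\}}(\R,\cE^{\{\om\}}(\R,\R)) \cong \cE^{\{\om\}}(\R^2)$, boundedness comes from the uniform boundedness principle (\Cref{l:ubAomE}) exactly as in \Cref{t:expAn}(1), and part (2) follows formally from (1), \Cref{l:prod}, and \Cref{t:ultra2}. The only place where you are more explicit than the paper is the converse direction, where you justify $\ell \o g \o (c_1\times c_2) \in \cE^{\{\om\}}(\R^2)$ by testing along $\cE^{\{\om\}}$-curves, invoking stability under composition and the ultradifferentiable Boman theorem on open sets --- a step the paper compresses into ``this follows from the special case.''
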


\begin{proof}
    (1) It is well-known that (1) holds for the special case $X_1=X_2 =E=\R$; see \cite{Schindl14a}.
    Let $f \in \AU(X_1,\AU(X_2,E))$. 
    Let $c_i \in \cE^{\{\om\}}(\R,X_i)$, $i=1,2$, and $\ell \in E^*$. 
    Then 
    \begin{equation} \label{e:red}
        (\ell \o f^\wedge \o (c_1 \times c_2))^\vee = (\ell_* \o c_2^*) \o f \o c_1 : \R \to \cE^{\{\om\}}(\R,\R)
    \end{equation}
    is of class $\cE^{\{\om\}}$. By the special case, $\ell \o f^\wedge \o (c_1 \times c_2) \in \cE^{\{\om\}}(\R^2)$ 
    and thus $f^\wedge \in \AU(X_1\times X_2,E)$.

    Conversely, let $g \in \AU(X_1\times X_2,E)$. Then $g^\vee$ takes values in $\AU(X_2,E)$.
    Each continuous linear functional on $\AU(X_2,E)$ factors over 
    some map $\ell_* \o c_2^* : \AU(X_2,E) \to \cE^{\{\om\}}(\R,\R)$ 
    for some $c_2 \in \cE^{\{\om\}}(\R,X_2)$ and $\ell \in E^*$.
    So it suffices to show that, for each $c_1 \in \cE^{\{\om\}}(\R,X_1)$, the map \eqref{e:red} 
    with $f=g^\vee$ is of class $\cE^{\{\om\}}$. 
    This follows from the special case.

    Thus we have proved that $f \in \AU(X_1,\AU(X_2,E))$ if and only if $f^\wedge \in \AU(X_1 \times X_2,E)$.
    To see that it is a bornological isomorphism we may proceed precisely as in the proof of 
    \Cref{t:expAn}(1), using the uniform boundedness principle \Cref{l:ubAomE}.
    Alternatively, it follows from the fact that \Cref{r:can} applies to the present situation.

    (2) This follows from (1), \Cref{l:prod}, and \Cref{t:ultra2}, by arguments similar to those 
    in the proof of \Cref{t:exp}(2). 
\end{proof}

%-----------------------------------------------------------------------------------------------------------------------
\appendix
%-----------------------------------------------------------------------------------------------------------------------

%-----------------------------------------------------------------------------------------------------------------------
\section{} \label{sec:appendix}
%-----------------------------------------------------------------------------------------------------------------------

Let $\om$ be a weight function and $U\subseteq \R^d$ an open set. 
Then $\cE^{\{\om\}}(U)$ is a differential ring with respect to multiplication of functions and 
the partial derivatives $\p_i$, $i=1,\ldots,d$; see e.g.\ \cite{Rainer:2021aa}.
Consequently, $\cE^{\{\om\}}(U)$ is stable by division of coordinates:
if $f \in \cE^{\{\om\}}(U)$ and $f|_{\{x_i=a_i\}} =0$, then $f(x) = (x_i-a_i)g(x)$ for $g \in \cE^{\{\om\}}(U)$.
Indeed, 
\[
    f(x_1,\ldots,x_i,\ldots,x_d) 
    = (x_i-a_i) \int_0^1  \p_i f(x_1,\ldots,a_i+t(x_i-a_i),\ldots,x_d)\, dt.
\]
It is not hard to see that multiplication $m : \cE^{\{\om\}}(U) \times \cE^{\{\om\}}(U) \to  \cE^{\{\om\}}(U)$
and differentiation $\p_i : \cE^{\{\om\}}(U) \to  \cE^{\{\om\}}(U)$ are continuous.

If $\om$ is a concave (up to equivalence) weight function, then $\cE^{\{\om\}}$ has strong stability 
properties. In particular, $\cE^{\{\om\}}$ is stable under composition and taking reciprocals: 
if $f \in \cE^{\{\om\}}(U)$ does nowhere vanish, then $1/f \in \cE^{\{\om\}}(U)$; see \cite{RainerSchindl14}.

Let $\{W^{[\xi]}\}_{\xi>0}$ be the family of weight sequences associated with $\om$ and write 
$w^{[\xi]}_k:=  W^{[\xi]}_k/k!$ (see \Cref{ssec:wf}).
If $\om$ is concave, hence subadditive, then it follows easily from the definition (e.g.\ \cite[Lemma 6.1]{RainerSchindl12}) that, for each $\xi>0$,
\begin{equation} \label{e:subadd}
    w^{[\xi]}_j w^{[\xi]}_k \le w^{[\xi]}_{j+k} \quad \text{ for all } j,k.
\end{equation}

We present a version of \cite[Theorem 1.4]{BelottoBierstoneChow17} adapted to the class $\cE^{\{\om\}}$
for robust weight functions $\om$.

\begin{theorem} \label[t]{t:A1}
    Let $\om$ be a robust weight function.
    Let $\vh : U \to V$ be a $\cE^{\{\om\}}$ map between open subsets of $\R^d$.
    Assume that the Jacobian determinant of $\vh$ is a monomial times a nowhere vanishing factor.
    Let $f \in \cC^\infty(V)$ and assume that $f \o \vh \in \cE^{\{\om\}}(U)$.
    Then, for each compact $K \subseteq U$, $f|_{\vh(K)} \in \cE^{\{\om\}}(\vh(K))$. 

    This holds in a bounded way:
    If $\cB$ is a subset of $\cC^\infty(V)$ and there exist  $\xi>0$ and  $C,\rh \ge 1$ such that
    \begin{equation*}
        \sup_{f \in \cB} \sup_{x \in K} \sup_{\al \in \N^d} \frac{|\p^\al(f \o \vh)(x)|}{\rh^{|\al|} W^{[\xi]}_{|\al|}} \le  C, 
    \end{equation*}
    then there exist $A>0$ and $\si\ge 1$ such that 
    \begin{equation*}
        \sup_{f \in \cB} \sup_{y \in \vh(K)} \sup_{\al \in \N^d} \frac{|\p^\al f)(y)|}{\si^{|\al|} W^{[H\xi]}_{|\al|}} \le  A, 
    \end{equation*}
    and where $H\ge 1$ is a constant depending only on $\om$ and $\vh$.
\end{theorem}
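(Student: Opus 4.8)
The plan is to follow the proof of \cite[Theorem 1.4]{BelottoBierstoneChow17} step by step, replacing the Denjoy--Carleman estimates used there by estimates adapted to $\cE^{\{\om\}}$ and exploiting the robustness of $\om$ to absorb the resulting loss of derivatives. First I would reduce to a local normal form for $\vh$: since the Jacobian determinant of $\vh$ is a monomial times a nowhere vanishing factor, after composing with $\cE^{\{\om\}}$ diffeomorphisms (harmless because, for concave $\om$, the class $\cE^{\{\om\}}$ is stable under composition and taking reciprocals) one may assume near each point of the compact $K$ that $\vh$ has a monomial shape. The hypothesis $f \o \vh \in \cE^{\{\om\}}(U)$ then provides, on $K$, estimates of the form $|\p^\be(f \o \vh)(x)| \le C\rh^{|\be|} W^{[\xi]}_{|\be|}$.

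The heart of the argument is a quantitative derivative bound: at a point $y = \vh(x) \in \vh(K)$, each derivative $\p^\al f(y)$ should be controlled by the derivatives $\p^\be(f \o \vh)(x)$ with $|\be| \le a|\al|$, where $a$ is a fixed integer determined by the exponents of the monomial Jacobian and the dimension $d$. This is exactly the content of the Belotto--Bierstone--Chow estimate; the stability by division of coordinates recalled at the start of the appendix, together with the differential ring structure of $\cE^{\{\om\}}$, is the mechanism that produces such a bound and keeps everything inside the class. Feeding the input estimates into this bound and using log-convexity and moderate growth of $W^{[\xi]}$ (see \Cref{l:prop}), together with the subadditivity \eqref{e:subadd} to organize the Leibniz and chain-rule combinatorics, I would arrive at a bound $|\p^\al f(y)| \le C' (\rh')^{|\al|} W^{[\xi]}_{a|\al|}$ valid on $\vh(K)$.

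It then remains to convert the index $a|\al|$ back to $|\al|$, and this is precisely where robustness enters. By \eqref{e:robust3} there are constants $G,H \ge 1$ depending only on $\om$ and $a$ (hence only on $\om$ and $\vh$) such that $W^{[\xi]}_{a|\al|} \le e^{G/\xi} W^{[H\xi]}_{|\al|}$. Substituting this yields $|\p^\al f(y)| \le A\si^{|\al|} W^{[H\xi]}_{|\al|}$ for suitable $A>0$ and $\si \ge 1$, which is exactly the asserted $\cE^{\{\om\}}$ estimate on $\vh(K)$ with the shifted parameter $H\xi$. Since every constant produced along the way depends only on $\om$, $\vh$, $K$ and on the input data $C,\rh,\xi$, and not on the individual $f$, the identical chain of inequalities applied uniformly over a family $\cB$ satisfying the stated hypothesis yields the bounded version, completing the proof.

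The main obstacle will be extracting from the Belotto--Bierstone--Chow proof the precise quantitative estimate of the middle paragraph, and in particular verifying that the number of derivatives of $f \o \vh$ needed to control $|\al|$ derivatives of $f$ grows only \emph{linearly} in $|\al|$, with a fixed integer slope $a$. This linear (rather than superlinear) loss is indispensable: it is exactly the amount that \eqref{e:robust3} can reabsorb, whereas a faster loss would leave the class $\cE^{\{\om\}}$. Controlling the combinatorial factors so that they combine with the weight sequence through \eqref{e:subadd} and \Cref{l:prop}, rather than degrading the estimate, is the delicate bookkeeping at the core of the adaptation.
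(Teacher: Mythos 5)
Your plan coincides with the paper's proof: the appendix carries out exactly the Belotto--Bierstone--Chow induction you describe, proving by induction on $|\be|$ a bound on $\p^\al((\p^{\be} f)\o\vh)$ in terms of $w^{[\xi]}_{(|\ga|+1)|\be|+|\al|}$ (so the linear slope is $a=|\ga|+1$, with $\ga$ the exponent of the monomial Jacobian determinant), organizing the combinatorics via \eqref{e:subadd} and then absorbing the linear loss through \eqref{e:robust3} precisely as you propose. The only cosmetic difference is that no monomial normal form for $\vh$ itself is needed: the paper works directly with $(\p_{y_j}) = x^{-\ga}\,T(x)\cdot(\p_{x_i})$, where $T$ is the reciprocal of the unit times the adjugate of the Jacobian matrix, and handles the division by $x^\ga$ by applying the fundamental theorem of calculus $|\ga|$ times inside the inductive step.
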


\begin{proof}
    We may assume that $K = [-r,r]^d$, for some $r>0$.
    Let $J(x)$ denote the Jacobian matrix of $\vh(x)$.
    By assumption, $\det J(x) = x^\ga u(x)$, where $\ga \in \N^d$ and $u$ is nowhere vanishing.
    Consider $T(x) := u(x)^{-1} \on{adj} J(x)$, where $\on{adj} J(x)$ is the adjugate matrix of $J(x)$.
    Note that 
    \begin{equation} \label{e:AJ}
        (\p_{x_i}) = J(x) \cdot (\p_{y_j}) \quad \text{ and } \quad (\p_{y_j}) = \frac{T(x)}{x^\ga} \cdot (\p_{x_i}),  
    \end{equation}
    where $(\p_{x_i})$ and $(\p_{y_j})$ are the column vectors of partial derivative operators. 
    By the assumption $\vh \in \cE^{\{\om\}}(U,V)$ and the remarks before the theorem, 
    there exist $\xi_0>0$ and  $C_0,\rh_0 \ge 1$ such that, for all $i,j \in \{1,\ldots,d\}$, $x \in K$, and $\al \in \N^d$,
    \begin{align} \label{e:A0}
        |\p^\al T_{ji}(x)| \le  C_0 \rh_0^{|\al|} W^{[\xi_0]}_{|\al|},
    \end{align}
    where the $T_{ji}$ denote the components of the matrix $T$.
    Since $g := f \o \vh \in \cE^{\{\om\}}(U)$,
    there exist $\xi>0$ and  $C,\rh \ge 1$ such that, for all $x \in K$ and $\al \in \N^d$,
    \begin{align} \label{e:A1}
        |\p^\al g(x)| \le  C \rh^{|\al|} W^{[\xi]}_{|\al|}.
    \end{align}
    It is no restriction to assume that $\xi \ge \xi_0$, $C\ge C_0$, and $\rh \ge \rh_0$. 

    Since $\om$ is robust, we may assume that $\om$ is subadditive, by \Cref{l:robust>}, 
    and consequently \eqref{e:subadd} holds.

    Let $D> d\rh$ be such that $\sum_{\al \in \N^d} (\frac{d\rh}{D})^{|\al|} =: B < \infty$.  
    We claim that, 
    for all $x \in K$ and $\al,\be \in \N^d$, 
    \begin{align} \label{e:Aclaim}
        |\p^\al((\p^\be f) \o \vh)(x)| \le (BCd)^{|\be|+1} D^{(|\ga|+1)|\be|+ |\al|}\, w^{[\xi]}_{(|\ga|+1)|\be|+ |\al|}\, \Ga(\al,\be),
    \end{align}
    where 
    \[
        \Ga(\al,\be) := \al! \prod_{j=1}^{|\be|} \max_{1 \le i \le d} (\al_i + j(\ga_i+1)).
    \]
    Let us proceed by induction on $|\be|$.
    The case $\be =0$ follows from \eqref{e:A1}.
    Fix $\tilde \be \in \N^d$ with $|\tilde \be|>0$. 
    Then $\tilde \be = \be + e_j$ for some $\be \in \N^d$ and some $j \in \{1,\ldots,d\}$.
    Then, by \eqref{e:AJ}, 
    \begin{align*}
        ((\p^{e_j}\p^\be f)\o \vh)(x) &= \sum_{i=1}^d \frac{T_{ji}(x)}{x^\ga}  \p^{e_i}(\p^\be f \o \vh)(x)
        \\
                                      &=\int_{[0,1]^{|\ga|}}\sum_{i=1}^d  \p^\ga \Big( T_{ji} \cdot \p^{e_i}(\p^\be f \o \vh)\Big) (\tilde x)\, Q_0(t) \, dt,
    \end{align*}
    by the fundamental theorem of calculus (applied $|\ga|$ times),
    where 
    \[
        t = (t_{11},\ldots,t_{1\ga_1},t_{21},\ldots,t_{2\ga_2},\ldots,t_{d1},\ldots,t_{d\ga_d}),
    \]
    $\tilde x = (\prod_{\ell=1}^{\ga_k} t_{k\ell} x_k)_{k=1}^d$,
    and $Q_0(t) = \prod_{k=1}^d \prod_{\ell=1}^{\ga_k} t_{k\ell}^{\ga_k-\ell}$.
    Consequently, 
    \begin{align*}
        |\p^\al ((\p^{\tilde \be} f) \o \vh)(x)| &\le   \int_{[0,1]^{|\ga|}}\sum_{i=1}^d  \Big|\p^{\ga+ \al} \Big( T_{ji} \cdot \p^{e_i}(\p^\be f \o \vh)\Big) (\tilde x)  \Big|\, Q_\al(t) \, dt,
    \end{align*}
    where $Q_\al(t) = \prod_{k=1}^d \prod_{\ell=1}^{\ga_k} t_{k\ell}^{\ga_k+\al_k-\ell}$. 
    Now
    \begin{align*}
        \MoveEqLeft \Big|\p^{\ga+ \al} \Big( T_{ji} \cdot \p^{e_i}((\p^\be f) \o \vh)\Big) (\tilde x)  \Big| 
        \\
        &\le \sum_{\ka+\la = \al+\ga} \frac{(\al+\ga)!}{\ka!\la!} |\p^\ka T_{ji}(\tilde x)||\p^{\la+e_i}((\p^\be f) \o \vh) (\tilde x)|
    \end{align*}
    and, by \eqref{e:A0} and the induction hypothesis, 
    \begin{align*}
        \MoveEqLeft |\p^\ka T_{ji}(\tilde x)||\p^{\la+e_i}((\p^\be f) \o \vh) (\tilde x)| 
        \\  
    &\le C   (d\rh)^{|\ka|} \ka!\, w^{[\xi]}_{|\ka|} (BCd)^{|\be|+1}D^{(|\ga|+1)|\be|+ |\la|+1} w^{[\xi]}_{(|\ga|+1)|\be|+ |\la|+1} \Ga(\la+e_i,\be) 
    \\
    &\le C (BCd)^{|\be|+1}D^{(|\ga|+1)|\tilde \be|+ |\al|} \ka!\, w^{[\xi]}_{(|\ga|+1)|\tilde \be|+ |\al|}  \Big(\frac{d\rh}{D}\Big)^{|\ka|} \max_{1\le i \le d} \Ga(\la+e_i,\be),
    \end{align*}
    using \eqref{e:subadd} in the last step.
    Since $\int_{[0,1]^{|\ga|}} Q_\al(t) \, dt = \frac{\al!}{(\al+\ga)!}$, we conclude
    \begin{align*}
        |\p^\al ((\p^{\tilde \be} f) \o \vh)(x)| &\le   B^{|\tilde \be|} C^{|\tilde \be|+1} d^{|\tilde \be|+1}D^{(|\ga|+1)|\tilde \be|+ |\al|} w^{[\xi]}_{(|\ga|+1)|\tilde \be|+ |\al|}
        \\
                                                 & \hspace{1cm} \cdot \sum_{\ka+\la = \al+\ga} \frac{\al!}{\la!} \Big(\frac{d\rh}{D}\Big)^{|\ka|} \max_{1\le i \le d}  \Ga(\la+e_i,\be).
    \end{align*}
    We have (see \cite[pp. 1970--1971]{BelottoBierstoneChow17}) 
    \[
        \frac{\al!}{\la!}\max_{1\le i \le d}  \Ga(\la+e_i,\be) \le \Ga(\al,\tilde \be).
    \]
    So, by the choice of $D$,
    \begin{align*}
        |\p^\al ((\p^{\tilde \be} f) \o \vh)(x)| &\le   (B Cd)^{|\tilde \be|+1}D^{(|\ga|+1)|\tilde \be|+ |\al|} w^{[\xi]}_{(|\ga|+1)|\tilde \be|+ |\al|}\Ga(\al,\tilde \be)
    \end{align*}
    and the claim \eqref{e:Aclaim} is proved.

    Taking $\al = 0$ in \eqref{e:Aclaim} and using \eqref{e:robust3}, we find, for all $y \in \vh(K)$ and all $\be \in \N^d$, 
    \begin{align*}
        |\p^\be f(y)| &\le   (BCd)^{|\be|+1} D^{(|\ga|+1)|\be|} w^{[\xi]}_{(|\ga|+1)|\be|} \Ga(0,\be) 
        \\
                      &\le (BCd)^{|\be|+1}(|\ga|+1)^{|\be|} D^{(|\ga|+1)|\be|} |\be|!\, w^{[\xi]}_{(|\ga|+1)|\be|}
                      \\
                      &\le  e^{G/\xi} (BCd)^{|\be|+1}(|\ga|+1)^{|\be|} D^{(|\ga|+1)|\be|}  W^{[H\xi]}_{|\be|}
                      \\
                      &= A \si^{|\be|} W^{[H\xi]}_{|\be|},
    \end{align*}
    where $A= BCde^{G/\xi}$ and $\si = BCd(|\ga|+1) D^{|\ga|+1}$ and 
    $G,H > 1$ are constants depending only on $\om$ and $\ga$.

    A careful re-reading of the proof confirms the supplementary assertion on the boundedness.
    (Note that it is no restriction to assume $\xi\ge 1$ so that $e^{G/\xi} \le e^G$.)
\end{proof}

\begin{lemma} \label[l]{l:compact}
    Let $K\subseteq \R^d$ be a compact fat subanalytic set. 
    Let $\om$ be a weight function with associated family $\{W^{[\xi]}\}_{\xi>0}$.
    For each $\rh>0$ there exists $\si>\rh$ such that the inclusion $\cE^\om_\rh(K) \to  \cE^\om_\si(K)$ is compact.
    Recall that $\cE^\om_\rh(K) = \{ f \in \cC^\infty(K) : \|f\|^\om_{K,\rh} <\infty\}$, 
    see \Cref{d:Eom}.
\end{lemma}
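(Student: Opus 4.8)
The plan is to produce a target level $\si>\rh$ for which the ratios $q_k:=W^{[\rh]}_k/W^{[\si]}_k$ decay geometrically, and then to deduce compactness from the Montel property of $\cC^\infty(K)$ by a tail estimate on the defining double supremum. First I would fix $\si$ using \Cref{l:prop}(4) with $\rho=2$: this yields a constant $H\ge 1$ (independent of $\xi$) and, upon taking $\xi=\rh$, a constant $C\ge 1$ with $2^k W^{[\rh]}_k\le C\, W^{[H\rh]}_k$ for all $k$. Necessarily $H>1$, for if $H\le 1$ then $W^{[H\rh]}_k\le W^{[\rh]}_k$ by \Cref{l:prop}(2) and the inequality would force $2^k\le C$ for all $k$. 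Setting $\si:=H\rh>\rh$ we get $q_k\le C\,2^{-k}\to 0$, while $q_k\le 1$ by \Cref{l:prop}(2); in particular $\|\cdot\|^\om_{K,\si}\le\|\cdot\|^\om_{K,\rh}$, so the inclusion $\cE^\om_\rh(K)\hookrightarrow\cE^\om_\si(K)$ is well defined and continuous.

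Since both spaces are Banach, it suffices to show that the closed unit ball $B=\{f:\|f\|^\om_{K,\rh}\le 1\}$ is relatively compact in $\cE^\om_\si(K)$, which I would verify sequentially. Let $(f_n)\subseteq B$. By definition $|\p^\al f_n(x)|\le W^{[\rh]}_{|\al|}$ for all $x\in K$ and $\al\in\N^d$, hence $\sup_n\|f_n\|_{K,\ell}<\infty$ for every $\ell$. As the topology of $\cC^\infty(K)$ for a compact fat subanalytic $K$ is given by the seminorms $\|\cdot\|_{K,\ell}$ (equivalently, it is the quotient topology transported by the continuous linear extension operator of \cite{Bierstone78}), the sequence $(f_n)$ is bounded in $\cC^\infty(K)$. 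The crucial structural fact is that $\cC^\infty(K)$ is a quotient of the nuclear Fr\'echet space $\cC^\infty(\R^d)$, hence itself nuclear Fr\'echet and thus Montel; therefore a subsequence, still written $(f_n)$, converges in $\cC^\infty(K)$ to some $f$. Passing to the limit preserves the bound $|\p^\al f(x)|\le W^{[\rh]}_{|\al|}$, so $f\in B$, and $\|f_n-f\|_{K,\ell}\to 0$ for every $\ell$.

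It then remains to upgrade $\cC^\infty(K)$-convergence to convergence in $\cE^\om_\si(K)$, and here I would split the supremum over $\al$ at an order $N$. For $|\al|>N$ the uniform bounds on $f_n$ and $f$ give $|\p^\al(f_n-f)(x)|/W^{[\si]}_{|\al|}\le 2q_{|\al|}\le 2\sup_{k>N}q_k$, which is small uniformly in $n$ precisely because $q_k\to 0$. For $|\al|\le N$ one uses $W^{[\si]}_{|\al|}\ge W^{[\si]}_0=1$ from \Cref{l:prop}(1) to bound the quotient by $\|f_n-f\|_{K,N}$, which tends to $0$ as $n\to\infty$. Choosing $N$ large (to handle the tail) and then $n$ large (to handle the finite range) shows $\|f_n-f\|^\om_{K,\si}\to 0$, so $(f_n)$ converges in $\cE^\om_\si(K)$; hence $B$ is relatively compact and the inclusion is compact.

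I expect the main obstacle to be the passage to a convergent subsequence, not the final estimate. The possibly cuspidal geometry of $K$ blocks a direct Arzel\`a--Ascoli argument, since a uniform equicontinuity modulus for the $\p^\al f_n$ on all of $K$ is exactly what the irregular boundary can destroy; circumventing this is the reason for routing through the nuclearity (Montel property) of $\cC^\infty(K)$, which extracts the $\cC^\infty$-convergent subsequence with no geometric input. Once that subsequence is in hand, the geometric decay $q_k\to 0$ furnished by \Cref{l:prop}(4) is what genuinely upgrades $\cC^\infty(K)$-convergence to $\cE^\om_\si(K)$-convergence, and this decay is the only place where the hypothesis relating $\rh$ and $\si$ is used.
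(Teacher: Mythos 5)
Your proof is correct in substance and arrives at the same choice of $\si=H\rh$ via \Cref{l:prop}(4) and the same tail estimate, but the way you extract compactness is genuinely different from the paper. The paper argues total boundedness directly: it fixes $\ep$, chooses the cut-off order $n$ from the geometric decay, and applies Arzel\`a--Ascoli to the finitely many families $\{\p^\al f : f \in \cB\}$, $|\al|\le n$, in $\cC(K)$; the equicontinuity you worry about is supplied by the fact that a compact connected subanalytic set is $m$-regular, i.e.\ any two points can be joined by a path in $K$ of length $\le C|x-y|^{1/m}$ (\cite[Theorem~6.10]{BM88}), which converts the uniform bound on $\p^{\al+e_i}f$ into a uniform $\tfrac1m$-H\"older modulus for $\p^\al f$ with respect to the Euclidean distance. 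So your stated ``main obstacle'' --- that cuspidal geometry blocks a direct Arzel\`a--Ascoli argument --- is not actually an obstacle here; it is exactly the step the paper carries out, with the subanalytic quasiconvexity as the geometric input. Your alternative, routing through the Montel property of $\cC^\infty(K)$ as a nuclear Fr\'echet quotient of $\cC^\infty(\R^d)$, does work, but be aware that it does not really avoid the geometry: to know that the unit ball of $\cE^\om_\rh(K)$ is bounded for the quotient topology (equivalently, that the quotient topology agrees with the topology of the seminorms $\|\cdot\|_{K,\ell}$, or that a continuous extension operator exists) you again need the identification of $\cC^\infty(K)$ with Whitney jets, which for a general compact fat subanalytic $K$ rests on the same $m$-regularity or on \cite{Bierstone78}; the paper only records these structural facts in \Cref{sec:smooth} for H\"older and simple closed fat subanalytic sets, so in your write-up this step should be justified explicitly rather than asserted. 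Once that is granted, your Fr\'echet--Montel extraction of a $\cC^\infty(K)$-convergent subsequence, the preservation of the bound $|\p^\al f(x)|\le W^{[\rh]}_{|\al|}$ in the limit, and the splitting of the supremum at order $N$ are all sound, and they yield sequential compactness of the inclusion, which suffices since both spaces are Banach. In short: same skeleton (choice of $\si$, geometric decay $q_k\le C2^{-k}$, splitting into low and high orders), but the paper buys an explicit, elementary $\ep$-net with one clearly isolated geometric citation, whereas your soft argument trades that for heavier functional-analytic machinery in which the same geometric hypothesis reappears implicitly.
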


\begin{proof}
    We adapt the arguments of \cite[Proposition 2.2]{Komatsu73}.
    We may assume that $K$ is connected.
    Then there exist an integer $m\ge 1$ and a constant $C>0$ such that 
    any two points $x,y \in K$ can be joined by a rectifiable path $\ga$ in $K$ with 
    length $\ell(\ga)$ satisfying $\ell(\ga) \le C |x-y|^{1/m}$ (see e.g.\ \cite[Theorem 6.10]{BM88}).

    Let $\cB$ be the unit ball in $\cE^\om_\rh(K)$ and $\ep>0$. 
    By \Cref{l:prop}(4), there exist $H,C> 1$ such that $2^k W^{[\rh]}_k \le C W^{[H\rh]}_k$ for all $k \in \N$. 
    Setting $\si := H\rh$, we have 
    \begin{equation} \label{e:Ac1}
        2^k W^{[\rh]}_k \le C W^{[\si]}_k \quad \text{ for all } k \in \N.
    \end{equation}
    Choose $n \in \N$ such that 
    \begin{equation} \label{e:A5}
        \frac{1}{2^n} \le \frac{\ep}{2C}.
    \end{equation}
    Let $f \in \cB$, $x,y \in K$, and $\ga$ a rectifiable path joining $x$ and $y$ with the listed properties.
    For $\al \in \N^d$ with $|\al|\le n$, we have
    \begin{align*}
        |\p^\al f(x) - \p^\al f(y)| &\le \sqrt d\, \ell(\ga) \sup_{z \in \ga, 1\le i\le d} |\p^{\al+e_i}f(z)| 
        \\
                                    &\le C \sqrt d\, |x-y|^{1/m} \|f\|^\om_{K,\rh} \exp(\tfrac{1}{\rh} \vh^*(\rh (|\al|+1)))
                                    \\
                                    &\le C_1 \, |x-y|^{1/m}.
    \end{align*}
    Thus $\{\p^\al f : f \in \cB\}$ is equicontinuous and pointwise bounded.
    By the theorem of Arzel\`a--Ascoli, 
    $\{\p^\al f : f \in \cB\}$ is relatively compact in $\cC(K)$. 
    So there exist $f_1,\ldots,f_k \in \cB$ such that for each $f \in \cB$ there is $i \in \{1,\ldots,k\}$
    such that
    \[
        \sup_{x \in K}|\p^\al f(x) -\p^\al f_{i}(x)| \le \ep \cdot \exp(\tfrac{1}{\si} \vh^*(\si |\al|))
    \]
    for all $|\al|\le n$. 
    For $|\al|>n$, we have  
    \begin{align*}
        \sup_{x \in K}|\p^\al f(x) -\p^\al f_{i}(x)| &\le (\|f\|^\om_{K,\rh} + \|f_i\|^\om_{K,\rh}) \exp(\tfrac{1}{\rh} \vh^*(\rh |\al|))
        \\
                                                     &\stackrel{\eqref{e:A5}}{\le} 2 \cdot \frac{2^n \ep}{2C} \cdot W^{[\rh]}_{|\al|} 
                                                     \\
                                                     &\stackrel{\eqref{e:Ac1}}{\le} \ep \cdot W^{[\si]}_{|\al|} = \ep \cdot \exp(\tfrac{1}{\si} \vh^*(\si |\al|)).
    \end{align*} 
    Thus $\{f_1,\ldots,f_k\}$ is an $\ep$-net for $\cB$ in $\cE^\om_{\si}(K)$.
\end{proof}

\subsection*{Funding and/or Conflicts of interests/Competing interests.}

This research was funded %in whole or in part 
    by the Austrian Science Fund (FWF) DOI 10.55776/PAT1381823.
    For open access purposes, the author has applied a CC BY public copyright license 
    to any author-accepted manuscript version arising from this submission.

    The author has no conflicts of interests or competing interests to declare that are relevant to the content of this article.

%\bibliography{../../../../references/biblio}
%\bibliographystyle{amsplain}

\def\cprime{$'$}
\providecommand{\bysame}{\leavevmode\hbox to3em{\hrulefill}\thinspace}
\providecommand{\MR}{\relax\ifhmode\unskip\space\fi MR }
% \MRhref is called by the amsart/book/proc definition of \MR.
\providecommand{\MRhref}[2]{%
    \href{http://www.ams.org/mathscinet-getitem?mr=#1}{#2}
}
\providecommand{\href}[2]{#2}

\end{document}